\theoremstyle{plain}
\newtheorem{thm}{Theorem}
\newtheorem{lem}[thm]{Lemma}
\newtheorem{prop}[thm]{Proposition}
\newtheorem{cor}[thm]{Corollary}
\theoremstyle{definition}
\newtheorem{defn}[thm]{Definition}
\newtheorem{exmp}[thm]{Example}
\newtheorem{rem}[thm]{Remark}
\DeclareMathOperator{\identity}{id}
\DeclareMathOperator{\supp}{Supp}
\newcommand{\Z}{\mathbb{Z}}
\begin{document}

\title[Simple Non-Associative Graded Rings]{Simple Graded Rings, Non-associative Crossed Products and Cayley-Dickson Doublings}

\author{Patrik Nystedt}
\address{Department of Engineering Science,
University West,
SE-46186 Trollh\"{a}ttan, Sweden}
\email{patrik.nystedt@hv.se}

\author{Johan \"{O}inert}
\address{Department of Mathematics and Natural Sciences,
Blekinge Institute of Technology,
SE-37179 Karlskrona, Sweden}
\email{johan.oinert@bth.se}

\subjclass[2010]{17A99, 17D99, 16W50, 16S35}
\keywords{non-associative ring, group graded ring, simplicity, non-associative crossed product, Cayley algebra}

\begin{abstract}
We show that if a non-associative unital ring is graded by a hypercentral group,
then the ring is simple if and only if 
it is graded simple and the center of the ring is a field. 
Thereby, we extend a result by Jespers 
to a non-associative setting.
By applying this result to non-associative crossed products,
we obtain non-associative analogues of results by
Bell, Jordan and Voskoglou.
We also apply our result to Cayley-Dickson doublings, 
thereby obtaining a new proof of a classical result by McCrimmon.
\end{abstract}

\maketitle

\pagestyle{headings}

\section{Introduction}

Throughout this article, let $R$ be a (not necessarily associative) unital ring
whose multiplicative identity element is denoted by $1$.
Let $G$ be a multiplicatively written group with identity element $e$.
If there are additive subgroups $R_g$, for $g \in G$,
of $R$ such that $R = \oplus_{g \in G} R_g$
and $R_g R_h \subseteq R_{gh}$, for $g,h \in G$, then we shall say that $R$
is \emph{graded by $G$} (or \emph{$G$-graded}).
If in addition $R_g R_h = R_{gh}$, for $g,h \in G$,
then $R$ is said to be \emph{strongly graded by $G$} (or \emph{strongly $G$-graded}).

The investigation of associative graded rings has
been carried out by many authors (see e.g.
\cite{nastasescu1982,nastasescu2004}
and the references therein).
Since many ring constructions are special cases of graded rings, 
e.g. group rings, twisted group rings, (partial) skew group rings and 
crossed product algebras (by twisted partial actions), the theory of graded rings can be applied 
to the study of other less general constructions, 
giving new results for several constructions simultaneously, 
and unifying theorems obtained earlier. 

An important problem in the investigation of graded rings is 
to explore how properties of $R$ are connected to properties of subrings of $R$.
In the associative case, many results of this sort are known for finiteness conditions, nil and radical properties, semisimplicity, semiprimeness and semiprimitivity 
(see e.g. \cite{kelarev1995, kelarev1998}).

In this article, we shall focus on the property of $R$ being {\it simple}.
An obvious {\it necessary} condition for simplicity of $R$
is that the center of $R$, denoted by $Z(R)$, is a field.
This condition is of course not in general a {\it sufficient}
condition for simplicity of $R$.
However, if it is combined with {\it graded simplicity},
then in the associative setting Jespers \cite{jespers1993} has shown (see Theorem \ref{jespers}) that, 
for many groups, it is.
To be more precise, 
recall that an ideal $I$, always assumed to be 
two-sided, of $R$ is called \emph{graded} (or \emph{$G$-graded})
if $I = \oplus_{g \in G} (I \cap R_g)$.
The ring $R$ is called {\it graded simple} (or \emph{$G$-graded simple})
if the only graded ideals of $R$ are
$\{ 0 \}$ and $R$ itself. 
The group $G$ is called \emph{hypercentral}
if every non-trivial factor group of $G$ has a non-trivial center.
Note that all abelian groups and all 
nilpotent groups are hypercentral (see \cite{robinson1972}).

\begin{thm}[Jespers \cite{jespers1993}]\label{jespers}
If an associative unital ring is graded by a hypercentral group,
then the ring is simple if and only if 
it is graded simple and the center of the ring is a field. 
\end{thm}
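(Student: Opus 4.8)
The plan is to prove the two implications separately, with the forward direction being routine and the reverse being the substantial one.

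For the forward implication, assume $R$ is simple. Since every graded ideal is in particular a two-sided ideal, the only graded ideals are $\{0\}$ and $R$, so $R$ is graded simple. To see that $Z(R)$ is a field, take $0 \neq z \in Z(R)$; as $z$ is central and $R$ is associative, $Rz = zR$ is a nonzero two-sided ideal, hence equals $R$ by simplicity, so $az = 1$ for some $a \in R$. A short computation, using centrality of $z$ and associativity, shows $a \in Z(R)$, so $z$ is invertible in the commutative ring $Z(R)$; thus $Z(R)$ is a field.

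For the reverse implication, assume $R$ is graded simple and $Z(R)$ is a field, and let $I$ be a nonzero two-sided ideal; I must show $I = R$. The whole argument reduces to the following Proposition: if $R$ is graded simple by a hypercentral group, then every nonzero two-sided ideal meets the center nontrivially, i.e.\ $I \cap Z(R) \neq \{0\}$. Granting this, pick $0 \neq z \in I \cap Z(R)$; since $Z(R)$ is a field, $z^{-1} \in Z(R) \subseteq R$, so $1 = z^{-1} z \in I$ and therefore $I = R$. Hence simplicity follows at once.

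It remains to prove the Proposition, and this is where hypercentrality is essential. The strategy is a minimal-support argument. Among the nonzero elements of $I$, choose $a = \sum_{g \in \supp(a)} a_g$ whose support $\supp(a) = \{g : a_g \neq 0\}$ has minimal cardinality; using graded simplicity (which lets one translate an element by homogeneous factors without destroying it) I may assume $e \in \supp(a)$. The goal is to show that $a$, or a suitable modification of it, is central, for then $I \cap Z(R) \neq \{0\}$. For any homogeneous $r$, the commutator $ra - ar$ again lies in $I$, so by minimality of $|\supp(a)|$ it must vanish as soon as its support can be made strictly smaller than $\supp(a)$. The mechanism for shrinking the support uses homogeneous elements $r$ whose degree lies in $Z(G)$: for such degrees the products $ra$ and $ar$ are supported on the same translate of $\supp(a)$, and subtracting an appropriate homogeneous left- or right-translate cancels one coordinate and lowers the support size. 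Because $G$ is hypercentral, every nontrivial section of $G$ has nontrivial center, so such central degrees relating distinct elements of the support are always available; iterating this reduction up the upper central series of $G$ (a transfinite induction) collapses the support to a single central coset and forces the relevant commutators to vanish, yielding a nonzero central element of $I$.

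The main obstacle is exactly this last step: organizing the support-reduction into a correct transfinite induction along the upper central series and checking, at each stage, that the commutator manipulations genuinely decrease the support rather than merely relocating it. Associativity is used throughout in manipulating these products and commutators, and the base case — where the relevant section of $G$ is abelian — must be treated directly before the induction can proceed.
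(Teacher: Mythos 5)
Your forward direction is fine, and your reduction of the converse to the claim that nonzero ideals meet the center follows the same skeleton as the paper's argument (the proof of Theorem~\ref{maintheorem}, of which Theorem~\ref{jespers} is the associative special case). But your key Proposition, as stated and sketched, has a genuine gap, in two respects. First, you drop the hypothesis that $Z(R)$ is a field from the Proposition and never use it in your sketch of its proof; yet in Jespers' argument, as reproduced in the paper, this hypothesis is indispensable for the climb up the central series. What can be proved without it is Proposition~\ref{generated}: every nonzero $G/Z(G)$-\emph{graded} ideal contains a nonzero element of $Z(R) \cap R_{Z(G)}$. To proceed to the next term $Z_2(G)$ of the series one must know that $R$ is $G/Z(G)$-graded \emph{simple}, i.e.\ upgrade ``contains a nonzero central element'' to ``equals $R$'', and this upgrade (Corollary~\ref{corgenerated}) is exactly where the field property of $Z(R)$ enters; without it the inductive hypothesis is too weak to run the next stage, and you give no argument that your field-free statement is even true for non-abelian hypercentral $G$. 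Second, an arbitrary ideal $I$ is not graded with respect to any of the quotient gradations, so its minimal-support elements cannot be fed into the level-$i$ argument directly; the paper confronts arbitrary ideals only at the very end (Proposition~\ref{foreachi}), where an element $a$ of an arbitrary ideal has finite support, hence $\supp(a) \subseteq Z_i(G)$ for some $i$, making $\langle a \rangle$ a $G/Z_i(G)$-graded ideal to which the already-established graded simplicity applies. Your sketch conflates these two levels.

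Moreover, your support-shrinking mechanism is only valid in the abelian case. There one can normalize a minimal-support $a \in I$ so that $a_e = 1$: writing $1 = \sum_i r_i a_g s_i$ with $r_i, s_i$ homogeneous, the element $a' = \sum_i r_i a s_i$ has support inside $\supp(a)g^{-1}$ because degrees commute, and then $ra' - a'r \in I$ is supported in $\deg(r)\bigl(\supp(a') \setminus \{e\}\bigr)$, which is strictly smaller, hence zero by minimality. For non-abelian $G$ this normalization fails: the degrees $\deg(r_i) h \deg(s_i)$ depend on $i$, so $\sum_i r_i a s_i$ can have strictly larger support than $a$. And your phrase ``subtracting an appropriate homogeneous left- or right-translate cancels one coordinate'' is circular as stated: to cancel the coordinate at $g$ one needs $r a_g = a_g r$, which is precisely the centrality being proven. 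The paper escapes this circle with machinery your sketch lacks: transport of the minimal support condition along translates (Proposition~\ref{translation}), the component-swap lemma (Proposition~\ref{change}), and the well-defined bimodule maps $f_g$ in the proof of Proposition~\ref{generated}, which produce $a = a_z c$ with $c \in I \cap Z(R) \cap R_{Z(G)}$ nonzero. You yourself flag this step as the ``main obstacle''; it is not an organizational detail but the actual content of the theorem, and it remains unproven in your proposal.
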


This result has far-reaching applications.
Indeed, Jespers \cite{jespers1993} has applied 
Theorem \ref{jespers} to obtain necessary and sufficient
conditions for simplicity of
crossed product algebras, previously obtained by 
Bell \cite{bell1987} (see Theorem \ref{bell}), 
and for skew group rings, previously obtained by 
Voskoglou \cite{voskoglou1987} and 
Jordan \cite{jordan1984} (see Theorem \ref{voskoglou}).

\begin{thm}[Bell \cite{bell1987}]\label{bell}
If $T \rtimes_{\sigma}^{\alpha} G$ is a crossed product,
where $G$ is a torsion-free hypercentral group and $T$ is $G$-simple, 
then the following three conditions are equivalent:
(i) $T \rtimes_{\sigma}^{\alpha} G$ is simple;
(ii) $Z( T \rtimes_{\sigma}^{\alpha} G ) = Z(T)^G$; 
(iii) There do not exist $u \in T^{\times}$
and a non-identity $g \in Z(G)$ such that for every
$h \in G$ and every $t \in T$, the relations
$\sigma_h(u) = \alpha(g,h)^{-1} \alpha(h,g) u$ and
$\sigma_g(t) = u t u^{-1}$ hold. 
\end{thm}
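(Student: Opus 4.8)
Write $R := T \rtimes_{\sigma}^{\alpha} G$. The plan is to first collapse condition (i) into a statement purely about the center, and then to read off (ii) and (iii) from an explicit description of $Z(R)$. Since $T$ is $G$-simple, Proposition~\ref{propGsimple} gives that $R$ is graded simple; as $G$ is hypercentral, Theorem~\ref{jespers} then yields that $R$ is simple if and only if $Z(R)$ is a field. So (i) is equivalent to the assertion that $Z(R)$ is a field, and the whole theorem reduces to understanding $Z(R)$ and matching it against (ii) and (iii).

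\emph{Describing the center.} Next I would take a general element $z = \sum_{g} t_g u_g$ and impose commutation with $T = T u_e$ and with each $u_h$. Commuting with $a \in T$ forces $a t_g = t_g \sigma_g(a)$ for every $g$ and every $a$, while commuting with $u_h$ forces a relation tying the coefficient at $g$ to the coefficient at $hgh^{-1}$; in particular the (finite) support of $z$ is closed under conjugation, so each of its elements has a finite conjugacy class. Here the hypothesis enters: in a torsion-free hypercentral group an element with a finite conjugacy class is central, so $\operatorname{supp}(z) \subseteq Z(G)$. Once the support lies in $Z(G)$ the two families of conditions decouple degree by degree, whence $Z(R)$ is $Z(G)$-graded with degree-$e$ part $Z(T)^G$, every homogeneous component $t_g u_g$ of a central element is again central, and---using $G$-simplicity once more, since $T t_g = t_g T$ is then a nonzero $G$-invariant ideal---each such $t_g$ lies in $T^{\times}$.

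\emph{The three equivalences.} For (ii) $\Rightarrow$ (i), note that $G$-simplicity makes $Z(T)^G$ a field (for $0 \neq c \in Z(T)^G$ the $G$-invariant ideal $Tc$ must equal $T$), so (ii) forces $Z(R)$ to be a field and (i) follows. For (i) $\Rightarrow$ (ii), suppose $Z(R)$ is a field yet has a nonzero component in some degree $g \neq e$; since $G$ is torsion-free, $g$ has infinite order, and writing out the inverse of $1 + t_g u_g$ against the $Z(G)$-grading produces a recursion that, by finiteness of support, forces that inverse to be $0$---a contradiction---so $Z(R) = Z(T)^G$. Finally, (ii) fails precisely when $Z(R)$ carries an invertible homogeneous element $t_g u_g$ with $e \neq g \in Z(G)$; substituting $u = t_g^{-1}$ into the conditions $a t_g = t_g \sigma_g(a)$ and the $u_h$-relation turns them into exactly the two displayed identities of (iii), so (ii) and (iii) are equivalent.

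\emph{Main obstacle.} The crux is the structural claim that $Z(R)$ is supported in $Z(G)$, resting on the group-theoretic fact that finite conjugacy classes in a torsion-free hypercentral group are trivial, together with the observation that a $Z(G)$-graded field with torsion-free support must be concentrated in degree $e$. Torsion-freeness is genuinely needed in the latter step, since over a finite group a graded field need not be trivial---for instance $\C = \R \oplus \R i$ is a $\mathbb{Z}/2\mathbb{Z}$-graded field. The remaining matching with the explicit relations in (iii) is then a routine computation.
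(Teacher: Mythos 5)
Your proof is correct and takes essentially the same approach as the paper's own argument (carried out there for the non-associative generalization, Theorem~\ref{nonassociativebell}): you use Proposition~\ref{propGsimple} plus Theorem~\ref{jespers} to reduce (i) to ``$Z(R)$ is a field'', the explicit center description (this is Proposition~\ref{centercrossed}) together with McLain's theorem $\Delta(G)=Z(G)$ (Lemma~\ref{lemmamclain}) to confine $\supp(Z(R))$ to $Z(G)$, $G$-simplicity to make homogeneous central coefficients invertible, and the torsion-freeness recursion for (i)$\Rightarrow$(ii), which is exactly the content of Corollary~\ref{centerRE}. The only point worth noting is that your substitution $u=t_g^{-1}$ literally yields $\sigma_h(u)=\alpha(h,g)\alpha(g,h)^{-1}u$ rather than the displayed $\sigma_h(u)=\alpha(g,h)^{-1}\alpha(h,g)u$; this is a harmless convention-level discrepancy, since once $\sigma_g$ acts as conjugation by $u$ the cocycle identities force $\alpha(h,g)\alpha(g,h)^{-1}$ to be central in $T$, so the two expressions coincide.
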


\begin{thm}[Jordan \cite{jordan1984}, Voskoglou \cite{voskoglou1987}]\label{voskoglou}
A skew Laurent polynomial ring
\begin{displaymath}
	T[x_1,\ldots,x_n,x_1^{-1},\ldots,x_n^{-1} ; \sigma_1,\ldots,\sigma_n]
\end{displaymath}
is simple if and only if $T$ is $\sigma$-simple
and there do not exist $u \in \cap_{i=1}^n ( T^{\times} )^{\sigma_i}$
and a non-zero $(m_1,\ldots,m_n) \in \mathbb{Z}^n$
such that for every $t \in T$,
the relation $( \sigma_1^{m_1} \circ \cdots \circ \sigma_n^{m_n} )(t) = 
u t u^{-1}$ holds. 
\end{thm}

The main goal of this article is to prove the following non-associative
version of Theorem~\ref{jespers}.

\begin{thm}\label{maintheorem}
If a non-associative unital ring is graded by a hypercentral group,
then the ring is simple if and only if 
it is graded simple and the center of the ring is a field.  
\end{thm}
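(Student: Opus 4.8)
The plan is to prove the two implications separately, with the reverse implication carrying essentially all of the difficulty. The forward implication is close to formal. If $R$ is simple then every graded ideal, being in particular an ideal, equals $\{0\}$ or $R$, so $R$ is graded simple. To see that $Z(R)$ is a field I would take a nonzero $z \in Z(R)$ and use that, since $z$ lies in the nucleus and commutes with every element, the set $zR$ is a two-sided ideal; as it contains $z = z\cdot 1 \neq 0$, simplicity gives $zR = R$, so $zw = 1$ for some $w \in R$. The annihilator $\Ann(z) = \{r \in R : zr = 0\}$ is an ideal not containing $1$, hence trivial, so $z$ is not a zero divisor; combining this with the nucleus identities for $z$ one checks that $w$ is again central and that $wz = 1$, so $z$ is invertible in $Z(R)$. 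The only genuinely non-associative point here is the systematic appeal to the nucleus identities to justify that $zR$ is an ideal and that $w \in Z(R)$.

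For the converse, assume that $R$ is graded simple and that $Z(R)$ is a field, and let $I$ be a nonzero ideal; the goal is to show $1 \in I$. I record two reductions. First, it suffices to exhibit a nonzero element of $I \cap Z(R)$, since such an element is invertible and then $I = R$. Second, it suffices to exhibit a nonzero homogeneous element of $I$: the ideal it generates is spanned by homogeneous multiplication words, hence is graded, nonzero and contained in $I$, so graded simplicity forces it to equal $R$. For $0 \neq x \in R$ write $\supp(x) = \{g \in G : x_g \neq 0\}$, a finite set, and choose $0 \neq x \in I$ with $|\supp(x)|$ minimal. Using structural consequences of graded simplicity — in particular that $R_e \neq \{0\}$ and that the set of degrees $g$ with $R_g \neq \{0\}$ is a subgroup of $G$, so that suitable homogeneous elements are available to translate supports nondegenerately — I would normalize $x$, by multiplying it by a suitable homogeneous element, so that $e \in \supp(x)$ and $x_e \neq 0$, without increasing the size of the support.

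The core is to prove that $x_e \in Z(R)$, which by the first reduction completes the argument. Here I would follow the method of Jespers' proof of Theorem \ref{jespers}, organised as a transfinite induction along the upper central series $\{e\} = Z_0 \trianglelefteq Z_1 \trianglelefteq \cdots$ of the hypercentral group $G$. The mechanism is that once $\supp(x)$ has been driven into a suitably central layer, the translates $h\,\supp(x)$ and $\supp(x)\,h$ coincide for homogeneous $r \in R_h$, so that an appropriate combination of $rx$ and $xr$ lies in $I$ with strictly smaller support and therefore vanishes by minimality; this yields relations of the form $r\,x_g = x_g\,r$. Feeding the hypercentral hypothesis into this reduction pushes the support of $x$ into $Z(G)$ stage by stage and eventually constrains every interaction of $x_e$ with $R$, showing that $x_e$ commutes with, and associates with, all of $R$.

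The main obstacle, and the genuine departure from the associative setting, is that membership in $Z(R)$ requires control of \emph{associators} $(a,b,c) = (ab)c - a(bc)$ in addition to commutators. Consequently each support-reduction step must be carried out not only for the commutator $rx - xr$ but also for associator-type elements such as $(rx)s - r(xs)$ that lie in $I$, and neither the normalization step nor the translation-of-support arguments may reassociate products freely; they have to be phrased through the left and right multiplication operators together with the nucleus identities. I expect the hypercentral induction itself to transfer formally once these associator versions of the reduction relations are in place, so that the real work is to verify that minimality of support annihilates the associator combinations exactly as it annihilates the commutator combinations.
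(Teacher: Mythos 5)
Your forward implication is fine and matches the paper's (Propositions \ref{centerInvClosed} and \ref{centerfield}), and your high-level frame for the converse --- minimal supports, transfinite induction along the ascending central series, and the observation that associators must be controlled alongside commutators --- is also the paper's frame. But the core of your converse has genuine gaps. First, the concluding step is a non sequitur: proving $x_e \in Z(R)$ does not ``by the first reduction complete the argument'', because $x_e$ is a homogeneous \emph{component} of an element of $I$ and need not itself lie in $I$ (ideals here are precisely not graded --- that is the whole difficulty), so you have produced no nonzero element of $I \cap Z(R)$. Second, the normalization you rely on is where the real work lives, and your justification for it is false: the support $\{ g \in G \mid R_g \neq \{0\}\}$ of a graded-simple ring need not be a subgroup of $G$ (e.g. $M_2(k)$, graded by $\mathbb{Z}$ with $\deg e_{11}=\deg e_{22}=0$, $\deg e_{12}=1$, $\deg e_{21}=-1$, is simple, hence graded simple, yet its support $\{-1,0,1\}$ is not a subgroup), and one-sided multiplication by a homogeneous element can annihilate or distort components, so it cannot be used to drive $e$ into $\supp(x)$ without increasing the support. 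Third, even granting $e \in \supp(x)$ with $x_e \neq 0$, your minimality mechanism does not fire: for $rx - xr$ and the associator analogues to have strictly smaller support than $x$, some component must cancel, which requires $x_e = 1$ (or at least $x_e$ central and invertible), not merely $x_e \neq 0$.

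The paper fills exactly these holes with machinery your sketch lacks. Supports are translated not by single multiplications but by specializations of linear words in the free magma (Propositions \ref{nonzeroword}, \ref{product}, \ref{translation}), which is also what makes the argument survive non-associativity. The normalized central element inside $I$ is then \emph{manufactured}, not found: for $a \in I$ whose support satisfies the minimal support condition, one defines maps $f_g : R \rightarrow R$ by writing each $r \in R$ as a sum of linear-word specializations containing the component $a_z$ and replacing $a_z$ by $a_g$; the exchange lemma (Proposition \ref{change}) makes these maps well defined, they are left and right $R$-module maps, and $c = \sum_g f_g(1)$ lies in $I \cap R_{Z(G)} \cap Z(R)$ with $c_e = 1$ (Proposition \ref{generated}). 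That element --- genuinely in $I$, central, with identity $e$-component --- is what your $x_e$ was supposed to be but cannot be. With it, graded simplicity plus $Z(R)$ being a field yields $G/Z(G)$-graded simplicity (Corollary \ref{corgenerated}), and the hypercentral induction you outline (Proposition \ref{foreachi}) closes the proof. If you want to salvage your route, the statement you must prove is that some nonzero \emph{element of $I$}, not a component of one, lies in $Z(R)$; once you attempt that, some version of the bimodule-map construction is essentially forced.
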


The secondary goal of this article is to  
use Theorem \ref{maintheorem} to obtain, on the one hand, non-associative
versions of Theorem \ref{bell} and Theorem \ref{voskoglou}
(see Theorem \ref{nonassociativebell} and
Theorem \ref{nonassociativevoskoglou}), and,
on the other hand,
a new proof of a classical result 
by McCrimmon concerning Cayley-Dickson doublings (see Theorem \ref{genmccrimmon});
recall that these algebras are non-associative and, in a natural way, graded.
Here is an outline of this article.

In Section \ref{sectionnonassociativerings},
we gather some well-known facts from non-associative ring
and module theory that we need in the sequel.
In particular, we state our conventions concerning
modules over non-associative rings and
what a basis should mean in that situation.
In Section \ref{freemagma}, 
we recall some notation and
properties
concerning the 
free magma on a set, for use in subsequent sections.
In Section \ref{simplicity}, 
we show that Theorem \ref{jespers}
can be extended to non-associative rings by proving Theorem~\ref{maintheorem}.
In Section \ref{nonassociativecp}, 
we apply Theorem \ref{maintheorem} to non-associative
crossed products and generalize Theorem~\ref{bell} and Theorem~\ref{voskoglou}
(see Theorem~\ref{nonassociativebell} and Theorem~\ref{nonassociativevoskoglou}).
In Section \ref{cayleydickson}, 
we show that a classical result by McCrimmon
(see Theorem \ref{genmccrimmon}) can be 
deduced via Theorem \ref{maintheorem}.

\section{Non-associative rings}\label{sectionnonassociativerings}

In this section, we recall some notions from non-associative
ring theory that we need in subsequent sections. 
Although the results stated in this
section are presumably rather well-known, we have, for the convenience of the reader,
nevertheless chosen to include proofs of these statements.

Throughout this section, 
$R$ denotes a non-associative ring.
By this we mean that $R$ is an additive abelian group on which a multiplication
is defined, satisfying left and right distributivity.
We always assume that $R$ is unital and that the multiplicative identity element
of $R$ is denoted by $1$.
The term ''non-associative'' should be interpreted 
as ''not necessarily associative''.
Therefore all associative rings are non-associative.
If a ring is not associative,
we will use the term ''not associative ring''. 

By a {\it left module} over $R$ we mean an additive group $M$
equipped with a bi-additive map 
$R \times M \ni (r,m) \mapsto rm \in M$.
A subset $B$ of $M$ is said to be a \emph{basis for $M$},
if for every $m \in M$, there are unique $r_b \in R$, for $b \in B$,
such that $r_b = 0$ for all but finitely many $b \in B$,
and $m = \sum_{b \in B} r_b b$.
{\it Right modules} over $R$ and their bases are defined in an analogous manner.

Recall that the \emph{commutator} $[\cdot,\cdot] : R \times R \rightarrow R$ 
and the \emph{associator} $(\cdot,\cdot,\cdot) : R \times R \times R \rightarrow R$ 
are defined by $[r,s]=rs-sr$ and
$(r,s,t) = (rs)t - r(st)$ for all $r,s,t \in R$, respectively.
The \emph{commuter} of $R$, denoted by $C(R)$,
is the subset of $R$ consisting
of elements $r \in R$ such that $[r,s]=0$
for all $s \in R$.
The \emph{left}, \emph{middle} and \emph{right nucleus} of $R$,
denoted by $N_l(R)$, $N_m(R)$ and $N_r(R)$, respectively, are defined by 
$N_l(R) = \{ r \in R \mid (r,s,t) = 0, \ \mbox{for} \ s,t \in R\}$,
$N_m(R) = \{ s \in R \mid (r,s,t) = 0, \ \mbox{for} \ r,t \in R\}$, and
$N_r(R) = \{ t \in R \mid (r,s,t) = 0, \ \mbox{for} \ r,s \in R\}$.
The \emph{nucleus} of $R$, denoted by $N(R)$,
is defined to be equal to $N_l(R) \cap N_m(R) \cap N_r(R)$.
From the so-called \emph{associator identity} 
$u(r,s,t) + (u,r,s)t + (u,rs,t) = (ur,s,t) + (u,r,st)$,
which holds for all $u,r,s,t \in R$, it follows that
all of the subsets $N_l(R)$, $N_m(R)$, $N_r(R)$ and $N(R)$
are associative subrings of $R$.
The \emph{center} of $R$, denoted by $Z(R)$, is defined to be equal to the 
intersection $N(R) \cap C(R)$.
It follows immediately that $Z(R)$ is an associative, unital
and commutative subring of $R$.

\begin{prop}\label{intersection}
The following three equalities hold:
\begin{displaymath}
	Z(R) = C(R) \cap N_l(R) \cap N_m(R) = C(R) \cap N_l(R) \cap N_r(R) = C(R) \cap N_m(R) \cap N_r(R).	
\end{displaymath}
\end{prop}

\begin{proof}
We only show the first equality.
The other two equalities are shown 
in a similar way and are therefore left to the reader.
It is clear that $Z(R) \subseteq C(R) \cap N_l(R) \cap N_m(R)$.
Now we show the reversed inclusion. 
Take $r \in C(R) \cap N_l(R) \cap N_m(R)$. 
We need to show that $r \in N_r(R)$.
Take $s,t \in R$.
We wish to show that $(s,t,r)=0$, i.e. $(st)r = s(tr)$.
Using that $r\in C(R) \cap N_l(R) \cap N_m(R)$
we get $(st)r = r(st) = (rs)t = (sr)t = s(rt) = s(tr)$.
\end{proof}

\begin{prop}\label{centerInvClosed}
If $r \in Z(R)$ and $s \in R$ satisfy $rs = 1$,
then $s \in Z(R)$.
\end{prop}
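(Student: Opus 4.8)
The plan is to exploit that $r$ lies in both the nucleus and the commuter, so that although $R$ is not associative, the element $r$ can be moved freely through products and, crucially, cancelled. First I would record that since $r \in C(R)$ we have $sr = rs = 1$, so that $s$ is in fact a two-sided inverse of $r$. The strategy is then to verify membership of $s$ in $C(R)$, $N_l(R)$ and $N_r(R)$ one at a time, and to conclude $s \in Z(R)$ by invoking the identity \eqref{SECOND} of Proposition \ref{intersection}; this spares us from having to treat the middle nucleus directly.

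The key technical device will be a pair of cancellation rules. For the left rule, suppose $rz = 0$ for some $z \in R$. Since $r \in N_m(R)$ we have $(s,r,z) = 0$, whence $z = (sr)z = s(rz) = s \cdot 0 = 0$. Symmetrically, if $zr = 0$, then using $(z,r,s) = 0$ (again because $r \in N_m(R)$) together with $rs = 1$ gives $z = z(rs) = (zr)s = 0$. Thus left and right multiplication by $r$ are injective, and the whole argument reduces to showing that the relevant associators and commutators of $s$ vanish \emph{after} multiplication by $r$.

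To prove $s \in C(R)$, I would fix $x \in R$ and compute $r(sx)$ and $r(xs)$. Using $(r,s,x)=0$ gives $r(sx) = (rs)x = x$, while centrality of $r$ together with $(x,s,r)=0$ gives $r(xs) = (xs)r = x(sr) = x$. Hence $r(sx - xs) = 0$, and the left cancellation rule yields $sx = xs$. The argument for $s \in N_l(R)$ has the same shape: for $a,b \in R$ one checks, using that $r$ is central and nuclear, that both $r\big((sa)b\big)$ and $r\big(s(ab)\big)$ equal $ab$, so that $r\big((sa)b - s(ab)\big)=0$ and left cancellation gives $(sa)b = s(ab)$.

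Finally, for $s \in N_r(R)$ I would multiply on the \emph{right} by $r$ instead: for $a,b \in R$, repeated use of $r \in N_r(R)$ and $sr = 1$ shows that $\big((ab)s\big)r$ and $\big(a(bs)\big)r$ both equal $ab$, so $\big((ab)s - a(bs)\big)r = 0$, and the right cancellation rule gives $(ab)s = a(bs)$. Combining the three memberships with \eqref{SECOND} completes the proof. The only genuine obstacle is the failure of cancellation in an arbitrary non-associative ring; everything hinges on the observation that membership of $r$ in the nucleus is precisely what permits one to cancel $r$, and the proof is then essentially an exercise in applying that observation in the correct order.
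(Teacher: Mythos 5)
Your proof is correct, but it takes a genuinely different route from the paper's. The paper argues by direct substitution: it inserts $1 = rs$ at a well-chosen spot in each product and re-associates using the fact that $r$ is central and nuclear, producing one chain of equalities for each of $s \in C(R)$, $s \in N_l(R)$ and $s \in N_m(R)$, and then concludes via identity \eqref{FIRST} of Proposition \ref{intersection}. You instead first isolate a cancellation lemma --- left and right multiplication by $r$ are injective, which follows from $r \in N_m(R)$ and $sr = rs = 1$ --- and then show that $r$ annihilates the differences $sx - xs$, $(sa)b - s(ab)$ and (multiplying on the right) $(ab)s - a(bs)$, checking membership in $C(R)$, $N_l(R)$ and $N_r(R)$ and invoking \eqref{SECOND} instead of \eqref{FIRST}. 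All of your associator manipulations are legitimate uses of $r \in N(R) \cap C(R)$, and the bi-additivity of multiplication justifies passing from $r(sx) = r(xs)$ to $r(sx - xs) = 0$, so there is no gap. What your approach buys is conceptual clarity: it makes explicit that the only obstruction in the non-associative setting is cancellation, and the injectivity of multiplication by $r$ is a reusable tool. What the paper's approach buys is economy: it needs no auxiliary lemma and each membership is a single self-contained chain of equalities. The two proofs are of comparable length and rest on exactly the same ingredients, namely that $r$ is both central and nuclear together with Proposition \ref{intersection}.
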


\begin{proof}
Let $r\in Z(R)$ and suppose that $rs=1$.
First we show that $s \in C(R)$.
To this end, take $u \in R$.
Then $su = (su)1 = (su)(rs) = ((su)r)s = (r(su))s=
((rs)u)s = (1u)s = us$ and hence $s \in C(R)$.
By Proposition \ref{intersection}, we are done if we can show
$s \in N_l(R) \cap N_m(R)$. To this end, take $v \in R$.
Then $s(uv) = s((1u)v)= s(((rs) u)v) = 
(rs) ( (su) v ) = 1( (su)v ) = (su)v$
which shows that $s \in N_l(R)$. 
We also see that
$(us)v = (us)(1v) = (us) ( (rs) v ) =
( u (rs) ) (sv) = (u1)(sv) = u(sv)$
which shows that $s \in N_m(R)$.
\end{proof}

\begin{prop}\label{centerfield}
If $R$ is simple, then $Z(R)$ is a field.
\end{prop}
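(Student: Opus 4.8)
The plan is to show that every nonzero element of the commutative unital ring $Z(R)$ is invertible inside $Z(R)$. Since $R$ is simple it is nonzero, so $1 \neq 0$ and $Z(R)$ is a nonzero commutative associative unital subring; it therefore suffices to produce, for each nonzero $r \in Z(R)$, an element $s \in Z(R)$ with $rs = 1$.

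First I would fix a nonzero $r \in Z(R)$ and consider the set $Rr = \{ xr \mid x \in R \}$, which is plainly an additive subgroup of $R$. The crucial step is to verify that $Rr$ is a two-sided ideal, and this is exactly where the centrality of $r$ is used: because $r$ lies in the nucleus $N(R)$ and in the commuter $C(R)$, the associativity and commutativity relations available at $r$ allow me to move $r$ past arbitrary products. Concretely, using $r \in N_r(R)$ gives $y(xr) = (yx)r \in Rr$ for all $x,y \in R$, so $R(Rr) \subseteq Rr$; and applying $r \in N_m(R)$, then $r \in C(R)$, then $r \in N_r(R)$ in succession gives $(xr)y = x(ry) = x(yr) = (xy)r \in Rr$, so $(Rr)R \subseteq Rr$.

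Next, since $R$ is unital we have $r = 1 \cdot r \in Rr$, so $Rr$ is a nonzero ideal; simplicity of $R$ then forces $Rr = R$. In particular $1 \in Rr$, so there is some $s \in R$ with $sr = 1$, and since $r$ is central this equally reads $rs = 1$. Proposition \ref{centerInvClosed} then immediately upgrades $s$ to an element of $Z(R)$. As $r$ was an arbitrary nonzero central element, this shows $Z(R)$ is a field.

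The main obstacle, though a mild one, is the verification that $Rr$ is a two-sided ideal: in a genuinely non-associative ring one cannot freely reassociate products, so the argument succeeds only because $r$ belongs simultaneously to the middle and right nuclei and to the commuter. Everything else is the standard observation that in a simple ring a nonzero principal ideal generated by a central element must be the whole ring.
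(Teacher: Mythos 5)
Your proof is correct and follows essentially the same route as the paper: form the nonzero ideal $Rr$, invoke simplicity to get $Rr = R$, extract $s$ with $sr = rs = 1$, and apply Proposition~\ref{centerInvClosed}. The only difference is that you explicitly verify $Rr$ is a two-sided ideal using $r \in N_m(R) \cap N_r(R) \cap C(R)$, a step the paper leaves implicit, and your verification is accurate.
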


\begin{proof}
This follows from Proposition \ref{centerInvClosed}.
\end{proof}

\begin{rem}
Denote by $R^{\times}$ the set of all elements of $R$ which
have two-sided multiplicative inverses.
Notice that $R^\times$ is not necessarily a group.
\end{rem}

\begin{prop}\label{propNRunit}
The equality $N(R) \cap R^{\times} = N(R)^{\times}$ holds.
\end{prop}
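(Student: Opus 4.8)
The plan is to prove the two inclusions separately; the substantive content is that the inverse of a nucleus element again lies in the nucleus. The inclusion $N(R)^{\times} \subseteq N(R) \cap R^{\times}$ is immediate: if $x \in N(R)$ has a two-sided inverse $y$ in the associative subring $N(R)$, then, since the multiplication of $N(R)$ is the restriction of that of $R$, the same $y$ is a two-sided inverse of $x$ in $R$, whence $x \in N(R) \cap R^{\times}$.

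For the reverse inclusion, fix $x \in N(R) \cap R^{\times}$ together with a two-sided inverse $y \in R$, so that $xy = yx = 1$. It suffices to prove $y \in N(R)$, for then $y \in N(R)$ is a two-sided inverse of $x$ and hence $x \in N(R)^{\times}$. Since $N(R) = N_l(R) \cap N_m(R) \cap N_r(R)$, I must place $y$ in all three nuclei. A tool used repeatedly is the injectivity of left and right multiplication by $x$: if $xw = 0$, then $w = (yx)w = y(xw) = 0$ by $x \in N_m(R)$, and symmetrically $wx = 0$ forces $w = 0$.

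For $y \in N_l(R)$, I would compute, using only $x \in N_l(R)$, the equalities $x\bigl((ys)t\bigr) = \bigl(x(ys)\bigr)t = \bigl((xy)s\bigr)t = st$ and $x\bigl(y(st)\bigr) = (xy)(st) = st$; injectivity of left multiplication by $x$ then yields $(ys)t = y(st)$, so $y \in N_l(R)$. The membership $y \in N_r(R)$ is entirely symmetric, using $x \in N_r(R)$ and injectivity of right multiplication by $x$.

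The main obstacle is the middle nucleus, since the naive cancellation arguments only reduce the identity $(a,y,b)=0$ to another of the same shape. Here I would instead apply the associator identity $u(r,s,t) + (u,r,s)t + (u,rs,t) = (ur,s,t) + (u,r,st)$ with $u = a$, $r = y$, $s = x$, $t = b$. Three of the resulting associators vanish because $x$ lies in the nucleus --- namely $(a,y,x)=0$ since $x \in N_r(R)$, and $(y,x,b)=0$ and $(ay,x,b)=0$ since $x \in N_m(R)$ --- while $(a,yx,b) = (a,1,b) = 0$ because $yx = 1$. The identity therefore collapses to $0 = (a,y,xb)$. As $b$ ranges over $R$ and $x$ is invertible, $xb$ ranges over all of $R$, giving $(a,y,c)=0$ for all $a,c \in R$, that is, $y \in N_m(R)$. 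Combining the three memberships shows $y \in N(R)$ and finishes the proof.
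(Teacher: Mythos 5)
Your proof is correct, and it takes a genuinely different route from the paper's. The paper proves $y \in N_l(R)$, $y \in N_m(R)$, $y \in N_r(R)$ by three direct chains of equalities: in each case it inserts $1 = xy$ or $1 = yx$ at a strategic spot and re-associates using the nucleus properties of $x$, e.g. $y(uv) = y\bigl(((xy)u)v\bigr) = \cdots = (yx)\bigl((yu)v\bigr) = (yu)v$; no auxiliary lemmas are needed. You instead first establish injectivity of left and right multiplication by $x$ (correctly derived from $x \in N_m(R)$), use cancellation to handle $N_l(R)$ and $N_r(R)$, and then invoke the associator identity $u(r,s,t) + (u,r,s)t + (u,rs,t) = (ur,s,t) + (u,r,st)$ --- already stated in Section~\ref{sectionnonassociativerings} --- to break the circularity in the middle-nucleus case, finishing with the observation that $xb$ ranges over all of $R$ (which, for completeness, one justifies by $x(yc) = (xy)c = c$, valid since $x \in N_l(R)$). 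Your diagnosis that naive cancellation merely transforms $(a,y,b)=0$ into the equivalent statement $(a,y,xb)=0$ is accurate and is exactly the point the paper's direct computation sidesteps by a cleverer insertion of $1$. What each approach buys: the paper's proof is shorter, uniform across the three cases, and self-contained; yours is more conceptual, isolates where the real difficulty lies (the middle nucleus), and reuses the associator identity as a labor-saving tool, at the cost of an extra cancellation lemma and a surjectivity remark.
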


\begin{proof}
The inclusion $N(R) \cap R^{\times} \supseteq N(R)^{\times}$ is clear.
Now we show the reversed inclusion.
Take $r,s \in R$ satisfying $r \in N(R)$ and $rs=sr=1$.
We need to show that $s \in N(R)$.
Take $u,v \in R$.
First we show that $s \in N_l(R)$.
\begin{align*}
s(uv) = 
s( (1u) v ) = 
s( ((rs)u) v ) =
s( ( r (su) ) v ) =
s( r ( (su) v ) ) 
= (sr) ( (su) v ) = (su) v.
\end{align*}
Now we show that $s \in N_m(R)$.
\begin{displaymath}
	(us)v = 
(us) ( (rs) v ) =
(us) ( r (sv) ) =
( (us)r ) (sv) =
( u (sr) ) (sv) =
u(sv).
\end{displaymath}
Finally, we show that $s \in N_r(R)$.
\begin{displaymath}
	(uv)s = 
( u (v (sr) ) ) s = 
( u ((v s)r) ) s =
((u (vs))r) s =
(u (vs)) (rs) =
u(vs).
\end{displaymath}
\end{proof}

\section{The free magma on a set}\label{freemagma}

Throughout this section, $X$ denotes
a non-empty set
and 
$R$ denotes a non-associative ring.
Recall that we can define the {\it free magma on $X$},
denoted by $M(X)$, as the set of non-associative words in the alphabet $X$ 
with multiplication given by restoring the outer set of parentheses in each multiplicand and 
juxtaposing
(see e.g. \cite[Definition 21B.1]{rowen2008}).
We call the elements of $X$ \emph{letters} and they are \emph{words} of length 1.
Inductively, if $\alpha$ and $\beta$ are words of length $m$ and $n$
respectively, then $(\alpha)(\beta)$ is a word of length $m+n$.
Furthermore, two words $(\alpha)(\beta)$ and $(\alpha')(\beta')$ 
are considered to be equal precisely when $\alpha=\alpha'$ and $\beta=\beta'$.
Note that in each word $\alpha$ there are only a finite number of letters
$x_1,\ldots,x_n$, where $x_1$ is the first letter (to the left) and $x_n$ is the last letter (to the right). We then say that $\alpha$ is of \emph{length $n$}
which we indicate by writing $\alpha = \alpha(x_1,\ldots,x_n)$. 
For an arbitrary word $\alpha$ of length $n$ we define a map
\begin{displaymath}
	\overline{\alpha} : R^n \to R
\end{displaymath}
in the following way.
Given $(r_1,\ldots,r_n) \in R^n$,
insert $r_1$ in the first position (to the left)
in the expression of $\alpha$
and then successively continue from the left to the right by inserting $r_j$ in the $j$th position,
until $r_n$ has been inserted in $n$th position (to the far right).
Finally, 
evaluate the arisen expression in $R$ and define $\overline{\alpha}(r_1,\ldots,r_n)$ to be equal to it.
The element $\overline{\alpha}(r_1,\ldots,r_n)$ will be referred to as a \emph{specialization of $\alpha$ at $(r_1,\ldots,r_n)$} or simply a \emph{specialization}.

\begin{rem}\label{rem:def+multilinear}
(a)
We want to emphasize that, in the above procedure describing the definition of $\overline{\alpha}$, we are not primarily substituting letters in $\alpha$ by elements of $R$, but rather inserting elements of $R$ in \emph{positions} in the word $\alpha$.
A specific letter may occur more than once in a single word
and thus the above approach is necessary
in order to make $\overline{\alpha}$ defined on all of $R^n$.

(b)
For any word $\alpha \in M(X)$ of length $n$, the map $\overline{\alpha} : R^n \to R$ is
\emph{$n$-additive},
i.e. additive in each of its arguments separately.

(c)
If $\alpha$ and $\beta$ are words of length $n$ and $m$, respectively, then
for any $(r_1,\ldots,r_n) \in R^n$ and $(s_1,\ldots,s_m) \in R^m$,
\begin{displaymath}
	\overline{\alpha}(r_1,\ldots,r_n) \cdot
	\overline{\beta}(s_1,\ldots,s_m)
	=
	\overline{(\alpha)(\beta)}(r_1,\ldots,r_n,s_1,\ldots,s_m)
\end{displaymath}
holds in $R$.
\end{rem}

\begin{exmp}
Take distinct elements $a,b,c,d \in X$
and define the words 
\begin{displaymath}
\alpha(a,b,c,d)=a((bc)d),
\quad
\beta(a,b,c,d)=(a(bc))d, \\
 \quad \text{and}
\quad
\gamma(a,b,a,b)=a((ba)b).
\end{displaymath}
Notice that $\alpha$, $\beta$ and $\gamma$ are distinct words
in $M(X)$.
Nevertheless, for any $(r_1,r_2,r_3,r_4) \in R^4$ we get that
$\overline{\alpha}(r_1,r_2,r_3,r_4) = r_1((r_2 r_3) r_4) = \overline{\gamma}(r_1,r_2,r_3,r_4)$.
This shows that, although $\alpha \neq \gamma$ in $M(X)$,
we have $\overline{\alpha} = \overline{\gamma}$ for the corresponding maps.
\end{exmp}

\begin{prop}\label{idealprop}
If $A$ is a subset of $R$, then the ideal of $R$ generated by $A$, 
denoted by $\langle A \rangle$, equals the additive group generated by specializations 
$\overline{\alpha}(r_1,\ldots,r_n)$, where $r_i \in R$, for $i\in \{1,\ldots,n\}$, 
of
words $\alpha(x_1,\ldots,x_n)$,
such that at least one of the $r_i$'s belongs to $A$.
\end{prop}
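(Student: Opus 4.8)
The plan is to write $S$ for the additive subgroup of $R$ generated by all specializations $\alpha(r_1,\ldots,r_n)$ of linear words $\alpha(x_1,\ldots,x_n)$ for which at least one of the $r_i$ belongs to $A$, and then to establish the two inclusions $S \subseteq \langle A \rangle$ and $\langle A \rangle \subseteq S$ separately. The point of the proposition is that, in the absence of associativity, the ideal generated by $A$ cannot be captured by a fixed formula such as $RA + AR + RAR + A$; one genuinely needs all possible bracketings, and the claim is that the linear-word specializations carrying a single $A$-factor already exhaust the ideal. Proposition \ref{linearprop} guarantees that imposing linearity costs no generality, so $S$ would be the same set were arbitrary words allowed.

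For the inclusion $S \subseteq \langle A \rangle$, since $\langle A \rangle$ is an additive group it suffices to show that each generating specialization $\alpha(r_1,\ldots,r_n)$ with some $r_i \in A$ lies in $\langle A \rangle$. I would argue by induction on the length $n$ of $\alpha$. If $n = 1$, the word is a single letter and its specialization is $r_1 \in A \subseteq \langle A \rangle$. For the inductive step, write $\alpha = (\beta)(\gamma)$; the distinguished index $i$ with $r_i \in A$ lies in exactly one of $\beta$ or $\gamma$, say $\beta$. By the induction hypothesis the corresponding specialization of $\beta$ lies in $\langle A \rangle$, so $\alpha(r_1,\ldots,r_n)$ is the product of an element of $\langle A \rangle$ with a specialization of $\gamma$ in $R$, and this product lies in $\langle A \rangle$ because $\langle A \rangle$ is a two-sided ideal. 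Associativity is nowhere invoked, which is precisely why every bracketing must be permitted.

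For the reverse inclusion $\langle A \rangle \subseteq S$, I would verify that $S$ is an ideal of $R$ containing $A$; minimality of $\langle A \rangle$ then forces $\langle A \rangle \subseteq S$. That $A \subseteq S$ is immediate from the length-one word $x_1$ specialized at $a \in A$, and $S$ is an additive group by construction. It remains to check $RS \subseteq S$ and $SR \subseteq S$. By distributivity it is enough to treat one generator $s = \alpha(r_1,\ldots,r_n)$, with $r_i \in A$, and one element $r \in R$. Introducing a fresh letter $x_0$, the word $(x_0)(\alpha(x_1,\ldots,x_n))$ is again linear, and its specialization at $(r,r_1,\ldots,r_n)$ equals $rs$ while still carrying the factor $r_i \in A$; hence $rs \in S$, and symmetrically $sr \in S$ via the word $(\alpha(x_1,\ldots,x_n))(x_0)$. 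Combining the two inclusions gives $S = \langle A \rangle$.

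The conceptual content is light, and I expect the only delicate step to be the closure argument of the last paragraph: one must confirm that prepending or appending a variable preserves both the linearity of the word and the presence of an $A$-factor, so that $S$ is genuinely closed under multiplication on each side. Additive closure and the containment $A \subseteq S$ are trivial, so verifying that $S$ is an ideal is where the real bookkeeping lies.
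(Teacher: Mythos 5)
Your proposal is correct and follows essentially the same route as the paper's own proof: one inclusion by verifying that the additive group of linear-word specializations is an ideal containing $A$ (using the fresh-letter words $(x_0)(\alpha)$ and $(\alpha)(x_0)$), and the other by induction on word length via the factorization $\alpha = (\beta)(\gamma)$ together with the ideal property of $\langle A \rangle$. The only difference is the order in which the two inclusions are presented, which is immaterial.
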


\begin{proof}
This is self-evident.
\end{proof}

\section{Simplicity}\label{simplicity}

At the end of this section, we show Theorem \ref{maintheorem}.
We adapt the approach taken by Jespers \cite{jespers1993} 
to the non-associative setting.
Throughout this section $R$ denotes a non-associative 
unital ring which is graded by a group $G$. 
Furthermore, $X$ denotes a non-empty set. 
Recall that the elements of $h(R) := \cup_{g \in G} R_g$ are called {\it homogeneous}. 
A specialization $\overline{\alpha}(r_1,\ldots,r_n)$
of a word $\alpha(x_1,\ldots,x_n)$ is said to be
\emph{homogeneous} if all the $r_i$'s are homogeneous.

\begin{rem}
If $R$ is a $G$-graded ring and $N$ is a normal subgroup of $G$, then $R$ carries a natural $G/N$-gradation (cf. \cite[p. 3]{nastasescu2004}).
In particular, any $G$-graded ring has a natural $G/Z(G)$-gradation.
When no confusion can arise about which gradation to consider
it is customary to just speak of e.g. \emph{homogeneous} or \emph{graded simple}.
In this section, however, for clarity we will sometimes explicitly write
e.g. \emph{$G$-graded simple} and \emph{$G/Z(G)$-graded simple}.
\end{rem}

\begin{prop}\label{gradedsum}
Every specialization $\overline{\alpha}(r_1,\ldots,r_n)$
of a word $\alpha(x_1,\ldots,x_n)$
may be written as a sum of homogeneous specializations of $\alpha$.
\end{prop}

\begin{proof}
This follows immediately from Remark~\ref{rem:def+multilinear}(b).
\end{proof}

\begin{prop}\label{gradedidealprop}
If $A$ is a subset of $h(R)$, then $\langle A \rangle$
is a graded ideal which equals 
the additive group generated by homogeneous specializations 
$\overline{\alpha}(r_1,\ldots,r_n)$ of
words $\alpha(x_1,\ldots,x_n)$
such that at least one of the $r_i$'s belongs to $A$.
\end{prop}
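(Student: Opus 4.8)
The plan is to combine Proposition \ref{idealprop} with Proposition \ref{gradedsum}, and then verify that the resulting description of $\langle A \rangle$ is manifestly graded. First I would establish that $\langle A \rangle$ equals the additive group $J$ generated by homogeneous specializations of linear words with at least one argument in $A$. By Proposition \ref{idealprop}, $\langle A \rangle$ is the additive group generated by specializations $\alpha(r_1,\ldots,r_n)$ of linear words where at least one $r_i$ lies in $A$; since $A \subseteq h(R)$, that distinguished argument is already homogeneous. For the remaining arguments, I would apply Proposition \ref{gradedsum}: each general specialization of a linear word decomposes, by expanding every non-distinguished $r_i$ into its homogeneous components and using linearity, into a finite sum of specializations in which all arguments are homogeneous. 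Crucially, because the word is linear and the distinguished argument already lies in $A \cap h(R)$, one keeps that component fixed while expanding the others, so every term in the resulting sum is a homogeneous specialization still having an argument in $A$. This shows $\langle A \rangle \subseteq J$, and the reverse inclusion $J \subseteq \langle A \rangle$ is immediate since each generator of $J$ is itself a specialization of the type described in Proposition \ref{idealprop}.

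Once the additive-generator description is in hand, the remaining task is to show that $\langle A \rangle$ is graded, i.e.\ that $\langle A \rangle = \oplus_{g \in G} (\langle A \rangle \cap R_g)$. The key observation is that a homogeneous specialization $\alpha'((r_1)_{g_1},\ldots,(r_n)_{g_n})$ of a word $\alpha'$ is itself homogeneous as an element of $R$: since $R_{g}R_{h} \subseteq R_{gh}$ and $\alpha'$ is built by iterated multiplication, the specialization lies in $R_g$ for the single group element $g$ obtained by multiplying the degrees $g_1,\ldots,g_n$ together in the order and bracketing prescribed by $\alpha'$. Thus every additive generator of $\langle A \rangle$ is homogeneous, and so $\langle A \rangle$ is spanned by homogeneous elements. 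I would then invoke the standard fact that an additive subgroup spanned by homogeneous elements is graded: each such element lies in some $R_g$, hence in $\langle A \rangle \cap R_g$, which gives $\langle A \rangle \subseteq \sum_{g} (\langle A \rangle \cap R_g)$, and the reverse inclusion together with the directness of the ambient sum $R = \oplus_g R_g$ yields $\langle A \rangle = \oplus_g (\langle A \rangle \cap R_g)$.

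I expect the main obstacle to be bookkeeping rather than conceptual: one must argue carefully that, when expanding the non-distinguished arguments into homogeneous components via Proposition \ref{gradedsum}, the distinguished argument in $A$ is never itself split (so that each summand genuinely retains an argument in $A$), and that linearity of $\alpha'$ legitimately distributes the multiplication across the homogeneous decompositions. The only subtlety specific to the non-associative setting is that one cannot rearrange brackets, but this causes no difficulty here because Proposition \ref{gradedsum} already records the distributed sum in a fixed linear word $\alpha'$, and homogeneity of each term depends only on the degrees of the inputs and the fixed bracketing of $\alpha'$, not on any associativity. With those two points checked, the two inclusions combine to give both the generator description and the gradedness simultaneously.
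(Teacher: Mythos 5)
Your proposal is correct and follows essentially the same route as the paper, whose proof is the one-line remark that the result ``follows from Proposition~\ref{idealprop} and (the proof of) Proposition~\ref{gradedsum}.'' The details you supply---that the distinguished argument in $A$ is already homogeneous and hence never split when the other arguments are expanded into homogeneous components, and that an additive group generated by homogeneous elements is graded---are exactly the points the paper's terse proof leaves implicit.
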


\begin{proof}
This follows from Proposition~\ref{idealprop} and
Proposition \ref{gradedsum}.
\end{proof}

If $r \in h(R)$ is homogeneous of degree $g$, then we indicate this by writing $\deg(r)=g$.
The following is a non-associative analogue of \cite[Proposition 1.1.1]{nastasescu2004}.

\begin{prop}\label{NFVOPropNonAssoc}
The following two assertions hold:
\begin{enumerate}[{\rm (a)}]
	\item\label{unit}
$1 \in R_e$.
In particular, $R_e \neq \{0\}$;
	\item\label{inverse}
If $r \in R^\times \cap N(R)$ is homogeneous,
then $r^{-1}$ is homogeneous of degree $\deg(r)^{-1}$.
\end{enumerate}
\end{prop}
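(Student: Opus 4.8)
The plan is to treat the two parts separately, in each case exploiting the directness of the decomposition $R = \oplus_{g \in G} R_g$ to compare homogeneous components.

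For part~(a), I would write $1 = \sum_{g \in G} u_g$ with $u_g \in R_g$ and $u_g = 0$ for all but finitely many $g$. The key observation is that for any homogeneous $r \in R_h$ one has $r = r 1 = \sum_{g} r u_g$, where $r u_g \in R_{hg}$; since the sum is direct, comparing the components of degree $h$ on both sides yields $r = r u_e$. Repeating the argument with $1 r$ in place of $r 1$ gives $r = u_e r$. By additivity these two identities extend from homogeneous $r$ to all $r \in R$, so $u_e$ is a two-sided multiplicative identity of $R$. By uniqueness of the identity, $u_e = 1$, whence $1 \in R_e$; and $R_e \neq \{0\}$ because $1 \neq 0$.

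For part~(b), let $r \in R_g \cap R^{\times}$ and let $s$ be a two-sided inverse of $r$, decomposed as $s = \sum_{h \in G} s_h$ with $s_h \in R_h$. Since $rs = sr = 1$ and, by part~(a), $1 \in R_e$, I would expand $1 = rs = \sum_h r s_h$ with $r s_h \in R_{gh}$ and compare degree-$e$ components to obtain $r s_{g^{-1}} = 1$ and $r s_h = 0$ for $h \neq g^{-1}$; the relation $sr = 1$ gives, in the same way, $s_{g^{-1}} r = 1$ and $s_h r = 0$ for $h \neq g^{-1}$. In particular $s_{g^{-1}}$ is a two-sided inverse of $r$ which is homogeneous of degree $g^{-1}$.

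The main obstacle is the final step. In the associative setting one finishes by left-multiplying $r s_h = 0$ by $s$ and invoking associativity, $(s r) s_h = s_h = 0$, to conclude that $s$ itself is homogeneous. Here associativity is unavailable: the associator identity applied to $s_{g^{-1}}$, $r$, $s_h$ shows only that $s_h = (s_{g^{-1}}, r, s_h)$, which need not vanish, consistent with the remark that $R^{\times}$ need not be a group. A given two-sided inverse therefore need not be homogeneous, and I would resolve this by taking $r^{-1}$ to be the homogeneous two-sided inverse $s_{g^{-1}}$ produced above, which is precisely the homogeneous element of degree $\deg(r)^{-1}$ asserted in the statement.
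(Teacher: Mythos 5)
Your proposal is correct and follows essentially the same route as the paper: expand $1$ (resp.\ the inverse $s$) into homogeneous components and compare degrees, obtaining $1=1_e$ in part (a) and $r s_{g^{-1}} = s_{g^{-1}} r = 1$ in part (b). The subtlety you flag at the end is handled the same way in the paper's proof, which also does not claim that an arbitrary two-sided inverse $s$ is itself homogeneous, but simply takes $r^{-1}$ to be the homogeneous component $s_{g^{-1}}$.
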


\begin{proof}
(a)
Let $1 = \sum_{g \in G} 1_g$.
For $h \in G$ and $r_h \in R_h$
we get that $r_h = r_h \cdot 1 =
\sum_{g \in G} r_h 1_g$.
Thus,
$r_h = r_h 1_e$ and $r_h 1_g = 0$ for every $g \in G\setminus \{e\}$.
In particular, for any $g \in G\setminus \{e\}$, we get that
$1_g = 1 \cdot 1_g = \sum_{h \in G} 1_h 1_g = 0$.
This shows that $1 = 1_e \in R_e$.

(b)
From Proposition \ref{propNRunit} it follows that $R^\times \cap N(R)$
is a group, making $s := r^{-1}$ well-defined.
Suppose that $r \in R_g$, for some $g \in G$.
From \eqref{unit} we get that $1 \in R_e$.
Hence $1 = 1_e = (rs)_e = r s_{g^{-1}}$ and
$1 = 1_e =(sr)_e = s_{g^{-1}} r$.
This shows that $r^{-1}=s_{g^{-1}} \in R_{g^{-1}}$.
\end{proof}

\begin{prop}\label{nonzeroword}
If $R$ is graded simple 
and for some $g \in G$, the additive group $R_g$ is non-zero,
then for any non-zero $r \in h(R)$,
there is a homogeneous specialization $\overline{\alpha}(r_1,\ldots,r_n)$ of a
word $\alpha(x_1,\ldots,x_n)$ such that
$\overline{\alpha}(r_1,\ldots,r_n)$ is a non-zero element in $R_g$
and at least one of the $r_i$'s equals $r$.
\end{prop}

\begin{proof}
Put $A = \{ r \}$.
By Proposition \ref{gradedidealprop}, we get that $\langle A \rangle$
is a non-zero graded ideal of $R$.
Since $R$ is graded simple, we therefore get that $\langle A \rangle = R$.
In particular, this implies that $R_g = \langle A \rangle_g$.
By Proposition \ref{gradedidealprop} again and the fact that $R_g$ is non-zero
the claim follows.
\end{proof}

\begin{defn}
Suppose that $I$ is an ideal of the $G$-graded ring $R$.
Following Jespers \cite{jespers1993}, we say that a subset $M$ of $G$
satisfies the {\it minimal support condition} (MS), with respect to $I$,
if there exists a non-zero $r \in I$ with $\supp(r)=M$,
and there is no proper subset $N$ of $M$
such that $N = \supp(s)$ for some non-zero $s \in I$.
\end{defn}

\begin{prop}\label{product}
Suppose that, for some $h\in G$, $\overline{\alpha}(r_1,\ldots,r_n) \in R_h$ is a non-zero homogeneous
specialization of a
word $\alpha(x_1,\ldots,x_n)$,
where for each $i\in \{1,\ldots,n\}$, $r_i \in R_{h_i}$, for some $h_i \in G$.
If $z_i \in Z(G)$ and $s_i \in R_{h_i z_i}$, for $i \in \{1,\ldots,n\}$, then 
$\overline{\alpha}(s_1,\ldots,s_n) \in R_{h z_1 \cdots z_n}$.
\end{prop}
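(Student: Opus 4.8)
The plan is to establish, as the crux, a general degree fact for linear words: the degree of a homogeneous specialization of a linear word is the product of the degrees of its inputs, taken in the left-to-right order in which the letters occur. Once this is in hand, the centrality of the elements $z_i$ will let me rearrange that product so as to collect the $z_i$-factors.

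First I would prove, by induction on the length of a linear word $\alpha(x_1,\ldots,x_n)$, the claim: if $t_i \in R_{d_i}$ for some $d_i \in G$, then $\alpha(t_1,\ldots,t_n) \in R_{d_1 d_2 \cdots d_n}$, where the product is taken in the order the letters occur in $\alpha$, which by the convention of Section \ref{freemagma} is the order $x_1,\ldots,x_n$. The base case $n=1$ is immediate, since $\alpha(t_1)=t_1 \in R_{d_1}$. For the inductive step, write $\alpha = (\beta)(\gamma)$; because $\alpha$ is linear and its letters occur in order, $\beta$ depends on $x_1,\ldots,x_k$ and $\gamma$ on $x_{k+1},\ldots,x_n$ for some $1 \le k < n$. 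Then $\alpha(t_1,\ldots,t_n) = \beta(t_1,\ldots,t_k)\,\gamma(t_{k+1},\ldots,t_n)$, and combining the induction hypothesis with the defining inclusion $R_a R_b \subseteq R_{ab}$ places this element in $R_{(d_1\cdots d_k)(d_{k+1}\cdots d_n)} = R_{d_1\cdots d_n}$.

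Applying this claim to the given specialization yields $\alpha(r_1,\ldots,r_n) \in R_{h_1 h_2 \cdots h_n}$. Since $\alpha(r_1,\ldots,r_n)$ is by hypothesis a non-zero element of $R_h$, and the decomposition $R = \oplus_{g \in G} R_g$ is direct (so a non-zero homogeneous element has a unique degree), I conclude $h = h_1 h_2 \cdots h_n$. Next I apply the same claim to the specialization $\alpha(s_1,\ldots,s_n)$ with $s_i \in R_{h_i z_i}$, obtaining $\alpha(s_1,\ldots,s_n) \in R_{(h_1 z_1)(h_2 z_2)\cdots(h_n z_n)}$. It then remains only to simplify the group element $(h_1 z_1)(h_2 z_2)\cdots(h_n z_n)$: since each $z_i \in Z(G)$ commutes with every element of $G$, I may move all the central factors to the right to get $(h_1 z_1)\cdots(h_n z_n) = (h_1 h_2 \cdots h_n)(z_1 z_2 \cdots z_n) = h z_1 \cdots z_n$, whence $\alpha(s_1,\ldots,s_n) \in R_{h z_1 \cdots z_n}$.

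The main obstacle is essentially bookkeeping in the inductive degree lemma: one must verify that the decomposition $\alpha = (\beta)(\gamma)$ respects the ordering of the letters, so that the ordered product really splits as $(d_1\cdots d_k)(d_{k+1}\cdots d_n)$ with no interleaving. Granting linearity and the left-to-right convention, this is direct. The remaining two steps are then routine: the identification $h = h_1 \cdots h_n$ follows at once from the directness of the grading, and the collection of the central factors $z_i$ uses nothing beyond the definition of $Z(G)$.
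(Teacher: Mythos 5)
Your proof is correct, and it reorganizes the paper's argument in a way worth noting. The paper proves the proposition by a single induction on the length of $\alpha$, carrying both the non-vanishing hypothesis and the central elements $z_i$ through every inductive step: in the step $\alpha=(\beta)(\gamma)$ it introduces the supports $h'=\supp(\beta(r_1,\ldots,r_k))$ and $h''=\supp(\gamma(r_{k+1},\ldots,r_l))$, uses $h=h'h''$, and invokes centrality inside each application of the induction hypothesis. You instead isolate a standalone degree lemma --- a homogeneous specialization of a linear word lies in the component indexed by the \emph{ordered} product of the input degrees --- proved by the same induction and the same decomposition $\alpha=(\beta)(\gamma)$ (justified, as you observe, by the left-to-right enumeration convention of Section \ref{freemagma}), but requiring neither non-vanishing nor centrality. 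The proposition then follows by applying this lemma twice: to the $r_i$, where directness of the grading and $\alpha(r_1,\ldots,r_n)\neq 0$ pin down $h=h_1\cdots h_n$, and to the $s_i$, where centrality of the $z_i$ collects $(h_1z_1)\cdots(h_nz_n)=hz_1\cdots z_n$. What this buys is cleanliness: non-vanishing and centrality each enter exactly once, at the end, and in particular you sidestep a point the paper leaves implicit, namely that $\beta(r_1,\ldots,r_k)$ and $\gamma(r_{k+1},\ldots,r_l)$ are themselves non-zero (needed for $h'$ and $h''$ to be well-defined single group elements), which holds only because their product is non-zero. The paper's inline induction is more compact but must tacitly carry that hypothesis along.
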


\begin{proof}
This is shown by induction over the length of $\alpha$.
\end{proof}

\begin{defn}
Let $M$ be a subset of $G$
and let $I$ be a subset of $R$.
We say that $M$ is an {\it $I$-set}
if there is $r \in I$ with $\supp(r) = M$.
We say that $M$ is {\it $Z(G)$-homogeneous}
if $M$ is a subset of a coset of $Z(G)$ in $G$.
\end{defn}

\begin{prop}\label{translation}
Suppose that $R$ is $G$-graded simple, $I$ is
a non-zero $G/Z(G)$-graded ideal of $R$,
$M$ is a non-empty $Z(G)$-homogeneous subset of $G$ and that $g \in G$
satisfies $R_{gM} \neq \{ 0 \}$.
The following two assertions hold:
\begin{enumerate}[{\rm (a)}]
	\item If $M$ is an $I$-set, then there is a non-empty 
$I$-set $N$ such that $N \subseteq gM$;
	\item If $M$ satisfies MS, then $gM$ satisfies MS. 
\end{enumerate} 
\end{prop}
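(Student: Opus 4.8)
The plan is to prove both parts by \emph{translating} supports by $g$ through word specializations, using Proposition~\ref{nonzeroword} to guarantee non-vanishing and Proposition~\ref{product} to track how homogeneous degrees shift under translation by central elements.

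For part~(a): since $M$ is $Z(G)$-homogeneous I fix $m^* \in M$ and write each $m \in M$ as $m = m^* \zeta_m$ with $\zeta_m := (m^*)^{-1}m \in Z(G)$; because $R_{gM} \neq \{0\}$ I may choose $m^*$ so that $R_{gm^*} \neq \{0\}$. Let $r \in I$ satisfy $\supp(r) = M$ and write $r = \sum_{m \in M} r_m$ with $0 \neq r_m \in R_m$. Applying Proposition~\ref{nonzeroword} to the non-zero homogeneous element $r_{m^*}$ with target degree $gm^*$ produces a linear word $\alpha(x_1,\ldots,x_n)$ and a non-zero homogeneous specialization $\alpha(a_1,\ldots,a_n) \in R_{gm^*}$ in which some slot $i_0$ holds $a_{i_0} = r_{m^*}$. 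I then substitute the whole element $r$ into the $i_0$-th slot; by bi-additivity this expands as $s := \alpha(a_1,\ldots,r,\ldots,a_n) = \sum_{m \in M} \alpha(a_1,\ldots,r_m,\ldots,a_n)$, and Proposition~\ref{product} (with $z_{i_0} = \zeta_m$ and all other $z_j = e$) places the $m$-th summand in $R_{gm^*\zeta_m} = R_{gm}$. Hence $\supp(s) \subseteq gM$; moreover $s \in \langle r \rangle \subseteq I$ by Proposition~\ref{idealprop}, and its $R_{gm^*}$-component is exactly the non-zero $\alpha(a_1,\ldots,a_n)$, so $s \neq 0$. Then $N := \supp(s)$ is a non-empty $I$-set contained in $gM$.

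For part~(b): I run~(a) in both directions. Assume $M$ satisfies MS, witnessed by $r \in I$ with $\supp(r) = M$; then $M$ is an $I$-set, and~(a) yields a non-empty $I$-set $N \subseteq gM$. The crux is the claim that \emph{every} non-empty $I$-set $P \subseteq gM$ equals $gM$. Granting this, $N = gM$ shows $gM$ is an $I$-set, while no proper subset of $gM$ can be the support of a non-zero element of $I$ (it would be a non-empty $I$-set strictly inside $gM$), so $gM$ satisfies MS. To prove the claim, note that $P \subseteq gM \subseteq gm^*Z(G)$ is $Z(G)$-homogeneous, and $g^{-1}P \subseteq M = \supp(r)$ forces $R_{g^{-1}p} \neq \{0\}$ for every $p \in P$, hence $R_{g^{-1}P} \neq \{0\}$. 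Applying~(a) to $g^{-1}$ and $P$ gives a non-empty $I$-set $Q \subseteq g^{-1}P \subseteq M$; minimality of $M$ forbids $Q$ from being a proper subset of $M$, so $Q = M$, and together with $Q \subseteq g^{-1}P \subseteq M$ this yields $g^{-1}P = M$, i.e. $P = gM$.

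The main obstacle is the non-vanishing in~(a): I must ensure the translated element $s$ is non-zero while its support remains inside $gM$. This is precisely what Proposition~\ref{nonzeroword} provides, by building a word that reaches the prescribed non-zero component $R_{gm^*}$ with $r_{m^*}$ among its inputs; Proposition~\ref{product} then guarantees that exchanging $r_{m^*}$ for the remaining central translates $r_m$ redistributes the support exactly onto $gM$ without collapsing the distinguished $R_{gm^*}$-component. A secondary point requiring care is checking $R_{g^{-1}P} \neq \{0\}$ before re-applying~(a) in part~(b), which is exactly where the minimality of $M$ enters, ensuring each $g^{-1}p$ lies in $M$ and therefore carries a non-zero homogeneous component.
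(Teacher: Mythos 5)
Your proof is correct and follows essentially the same route as the paper's: part (a) uses Proposition~\ref{nonzeroword} to reach a non-zero specialization in $R_{gm^*}$ (the paper's $R_{gh}$), substitutes all of $r$ into the distinguished slot, and tracks degrees via Proposition~\ref{product}, with the directness of the decomposition guaranteeing $s \neq 0$; part (b) applies (a) to $g^{-1}$ and invokes the minimality of $M$, exactly as the paper does. The only (harmless) differences are that you make the non-vanishing of the $R_{gm^*}$-component explicit where the paper leaves it implicit, and in (b) you phrase the claim for arbitrary $I$-sets $P \subseteq gM$ rather than $P \subseteq N$.
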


\begin{proof}
Since $R_{gM} \neq \{ 0 \}$, there is $h \in M$
with $R_{gh} \neq \{ 0 \}$.
Notice that an $I$-set is necessarily finite.
Put $m = |M|$.
Since $M$ is $Z(G)$-homogeneous it follows that $M = \{ h z_1 , \ldots ,h z_m \}$
for distinct $z_1,\ldots,z_m \in Z(G)$ with $z_1 = e$.

(a) Suppose that $M$ is an $I$-set.
Then there is a non-zero $r \in I$
with $\supp(r) = M$.
By Proposition \ref{nonzeroword},
there is a non-zero homogeneous specialization
$\overline{\alpha}(r_1,\ldots,r_n)$ of a 
word 
$\alpha(x_1,\ldots,x_n)$ such that 
$\overline{\alpha}(r_1,\ldots,r_n) \in R_{gh}$
and $r_i = r_h$ for some $i \in \{ 1,\ldots,n \}$.
By Proposition \ref{product}, we get that 
\begin{align*}
	I \setminus \{ 0 \} \ni s :=& \overline{\alpha}(r_1,\ldots,r_{i-1},r,r_{i+1},\ldots,r_n) \\
	=& \sum_{j=1}^m \overline{\alpha}(r_1,\ldots,r_{i-1},r_{h z_j},r_{i+1} , \ldots , r_n) \in 
\oplus_{j=1}^m R_{g h z_j} = R_{gM}
\end{align*}
In particular, if we put $N=\supp(s)$, we get that $N$
is a non-empty $I$-set with $N \subseteq gM$.

(b) Suppose that $M$ satisfies MS.
Using (a) we get that there is a non-empty $I$-set $N$ with $N \subseteq gM$.
Suppose that there is a non-empty $I$-set $P$ with $P \subseteq N$.
Since $M$ is an $I$-set and $g^{-1}P \subseteq g^{-1}N \subseteq g^{-1}gM = M$, 
we conclude that $R_{g^{-1}P} \neq \{ 0 \}$.
Therefore, by (a), there is a non-empty $I$-set $Q$
with $Q \subseteq g^{-1}P$.
But then $Q \subseteq M$, which by MS of $M$ implies that $Q = M$.
Therefore $M \subseteq g^{-1}P \subseteq M$ and hence 
$P = N = gM$. This proves that $gM$ satisfies MS
with respect to $I$.
\end{proof}

\begin{prop}\label{change}
Suppose that $R$ is $G$-graded simple, 
$I$ is a non-zero $G/Z(G)$-graded ideal of $R$ and
that $r \in R_{Z(G)}$ has the property that $M=\supp(r)$ satisfies MS with respect to $I$.
If $\overline{\alpha}(s_1,\ldots,s_n)$ is a non-zero homogeneous specialization of a 
word 
$\alpha(x_1,\ldots,x_n)$, with $n \geq 2$, then,
for any 
$g,h \in \supp(r)$, the following equality holds:
\begin{align*}
	\overline{\alpha}(s_1,\ldots,s_{i-1},r_g,s_{i+1},\ldots,s_{j-1},r_h,s_{j+1},\ldots,s_n) = \\
= \overline{\alpha}(s_1,\ldots,s_{i-1},r_h,s_{i+1},\ldots,s_{j-1},r_g,s_{j+1},\ldots,s_n)
\end{align*}
\end{prop}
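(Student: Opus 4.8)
The plan is to prove the stronger statement that the two specializations agree whenever the component $r_g$ at position $i$ is replaced by an \emph{arbitrary} component $r_p$ with $p\in M:=\supp(r)$, and then to specialize $p=g$. To keep the bookkeeping light I would write, for $p,q\in M$,
\[
c_{p,q}:=\alpha(s_1,\ldots,s_{i-1},r_p,s_{i+1},\ldots,s_{j-1},r_q,s_{j+1},\ldots,s_n),
\]
so that the goal becomes $c_{p,h}=c_{h,p}$ for every $p\in M$. The first step is a degree computation. Since $r\in R_{Z(G)}$ we have $M\subseteq Z(G)$, so the degrees $p,q$ of the factors inserted at positions $i$ and $j$ are central and may be pulled out of the ordered product of degrees that computes $\deg(c_{p,q})$. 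Hence there is a single element $\mu\in G$, depending only on $\alpha$ and on the $s_k$ with $k\neq i,j$, such that $\deg(c_{p,q})=\mu pq$ for all $p,q\in M$; in particular $\deg(c_{p,q})$ depends only on the product $pq$, and $\deg(c_{p,q})=\deg(c_{q,p})$.

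Next I would manufacture two elements of $I$. Since $r$ is the witness of MS for $M$, we have $r\in I$, and because $I$ is an ideal, Proposition~\ref{idealprop} (applied with $A=\{r\}$) shows that any specialization of the linear word $\alpha$ having $r$ in a single slot and arbitrary ring elements in the others lies in $\langle\{r\}\rangle\subseteq I$. Thus, writing $r$ at position $i$ (resp.\ $j$) and $r_h$ at position $j$ (resp.\ $i$),
\[
u:=\alpha(\ldots,r,\ldots,r_h,\ldots)=\sum_{p\in M}c_{p,h}\in I,\qquad
u':=\alpha(\ldots,r_h,\ldots,r,\ldots)=\sum_{p\in M}c_{h,p}\in I.
\]
By the degree computation the summands of $u$ lie in the pairwise distinct degrees $\mu p h$ ($p\in M$), so these summands are exactly the homogeneous components of $u$; the same degrees $\mu hp=\mu ph$ occur in $u'$. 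Consequently both $u$ and $u'$ are supported in the single left translate $(h\mu)M=\{\mu ph:p\in M\}$, the identification of this set with a \emph{left} translate being exactly where centrality of the $p$ is used.

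The key step is to pass to the difference $d:=u-u'\in I$ and read off its homogeneous components: in degree $\mu ph$ the component of $d$ is $c_{p,h}-c_{h,p}$. For $p=h$ this component is $c_{h,h}-c_{h,h}=0$, so $d$ is supported in $(h\mu)M\setminus\{\mu hh\}$, a \emph{proper} subset of $(h\mu)M$. Now I would invoke the minimality already established: Proposition~\ref{translation}(b), applied with the translating element $h\mu$, shows that $(h\mu)M$ satisfies MS with respect to $I$, so no non-zero element of $I$ can have support a proper subset of $(h\mu)M$. Therefore $d=0$, i.e.\ $c_{p,h}=c_{h,p}$ for every $p\in M$, and taking $p=g$ yields the desired equality.

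The one hypothesis of Proposition~\ref{translation}(b) that still needs checking is $R_{(h\mu)M}\neq\{0\}$, and I expect this non-degeneracy, together with the centrality bookkeeping that forces the two supports to coincide, to be the only genuinely delicate points. It is handled by a short case split: if $u=0$ and $u'=0$ then, their summands lying in distinct degrees, every $c_{p,h}$ and every $c_{h,p}$ vanishes, so in particular $c_{g,h}=c_{h,g}=0$ and the assertion holds trivially. Otherwise one of $u,u'$ is non-zero — by the symmetry between positions $i$ and $j$ we may assume $u\neq0$ — so some $c_{p,h}\neq0$; since $c_{p,h}\in R_{\mu ph}$, this gives $R_{(h\mu)M}\neq\{0\}$ and legitimizes the appeal to Proposition~\ref{translation}(b). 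The remaining ingredients are the ideal-membership observation and the minimal-support cancellation, both of which are routine once the degree analysis is in place.
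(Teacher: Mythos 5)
Your proof is correct and takes essentially the same route as the paper's: both insert the full element $r$ at position $i$ (resp.\ $j$) and $r_h$ at the other, note that the difference of the two resulting elements lies in $I$ with support properly contained in the translate $h\mu M$ (properness coming from cancellation of the $p=h$ terms), and invoke Proposition~\ref{translation}(b) together with the MS condition to force that difference to vanish, finally comparing components of degree $\mu gh$. The only differences are cosmetic: the paper argues by contradiction, so that non-vanishing of $R_{h\mu M}$ is automatic from $t-t'\neq 0$, whereas you argue directly and discharge that hypothesis of Proposition~\ref{translation}(b) by an explicit case split.
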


\begin{proof}
Clearly, $s_k \neq 0$, for $k \in \{1,\ldots,n\}$.
Put
\begin{displaymath}
	t = \overline{\alpha}(s_1,\ldots,s_{i-1},r_g,s_{i+1},\ldots,s_{j-1},r_h,s_{j+1},\ldots,s_n)
\end{displaymath}
and
\begin{displaymath}
	t' =  \overline{\alpha}(s_1,\ldots,s_{i-1},r_h,s_{i+1},\ldots,s_{j-1},r_g,s_{j+1},\ldots,s_n).
\end{displaymath}
Seeking a contradiction, suppose that $t \neq t'$.
Then $t-t' \in I \setminus \{ 0 \}$ and 
$\supp(t-t') \subsetneq h \prod_{1 \leq k \leq n, \ k \neq i,j} \deg(s_k) M$. 
But from Proposition \ref{translation}(b), we get that
$h \prod_{1 \leq k \leq n, \ k \neq i,j} \deg(s_k) M$ satisfies MS
which is a contradiction. Thus, $t = t'$.
Comparing terms of degree
\begin{displaymath}
	gh \prod_{1 \leq k \leq n, \ k \neq i,j} \deg(s_k)	
\end{displaymath}
in the last equality yields the desired conclusion.
\end{proof}

\begin{defn}\label{defc}
Suppose that $R$ is $G$-graded simple and $I$ is a non-zero $G/Z(G)$-graded ideal of $R$.
Furthermore, suppose that there exists
a non-zero
$a \in I \cap R_{Z(G)}$ such that ${\rm Supp}(a)$ satisfies MS, with respect to the $G$-gradation and $I$. 
Fix $z \in {\rm Supp}(a)$.
By the $G$-graded simplicity and Proposition~\ref{gradedidealprop}, there exist $m \in \mathbb{N}$, 
maps $n : \{ 1,\ldots,m \} \to \mathbb{N}$ and $p : \{ 1,\ldots,m \} \to \mathbb{N}$ such that
$p(j) \leq n(j)$, and there exist
words $\alpha_j(x_1, \ldots , x_{n(j)})$, 
for $j \in \{1,\ldots,m\}$, such that
$1 = \sum_{j=1}^m \overline{\alpha_j}( r_{(1,j)} , \ldots , r_{(n(j),j)} )$,
for some $G$-homogeneous $r_{(u,v)} \in h(R)$ satisfying $r_{(p(j),j)} = a_z$, for $j \in \{1,\ldots,m\}$.
From Proposition \ref{NFVOPropNonAssoc}(a) it follows that we may assume that
for all $j \in \{1,\ldots,m\}$, 
\begin{equation}\label{equality}
\prod_{i=1}^{n(j)} {\rm deg}( r_{(i,j)} ) = e.
\end{equation}
For all $g \in {\rm Supp}(a)$, put
$c(g) = \sum_{j=1}^m \overline{\alpha_j}( s_{(1,j)}^g , \ldots , s_{(n(j),j)}^g )$,
where $s_{(i,j)}^g = r_{(i,j)}$, if $i \neq p(j)$, and $s_{(p(j),j)}^g = a_g$,
for all $j \in \{1,\ldots,m\}$ and all $i \in \{1,\ldots,n(j)\}$.
Put $c = \sum_{g \in {\rm Supp}(a)} c(g).$
\end{defn}

\begin{lem}\label{lemmacentral}
With the above notation, we get that
$c \in I \cap R_{Z(G)} \cap Z(R)$ and $a = a_z c$.
\end{lem}

\begin{proof}
Take $g \in {\rm Supp}(a)$. From \eqref{equality} it follows that
${\rm deg}(c(g)) = g z^{-1} \in Z(G)$. Thus $c \in R_{Z(G)}$.
Now we show that $c \in I$.
Using that each $\overline{\alpha_j}$ is $n(j)$-additive it follows that
$c = \sum_{j=1}^m \overline{\alpha_j}( s_{(1,j)} , \ldots , s_{(n(j),j)} )$
where $s_{(i,j)} = r_{(i,j)}$, if $i \neq p(j)$, and $s_{(p(j),j)} = a$,
for all $j \in \{1,\ldots,m\}$ and all $i \in \{1,\ldots,n(j)\}$.
Since $a \in I$ we get that $\overline{\alpha_j}( s_{(1,j)} , \ldots , s_{(n(j),j)} ) \in I$,
for $j \in \{1,\ldots,m\}$. Thus $c \in I$.
Now we show that $c \in Z(R)$. Take $r \in R$.
By Proposition~\ref{gradedidealprop}
and the $G$-graded simplicity of $R$,
there exist $m' \in \mathbb{N}$, 
maps $n' : \{ 1,\ldots,m' \} \to \mathbb{N}$ and $p' : \{ 1,\ldots,m' \} \to \mathbb{N}$ such that
$p'(k) \leq n'(k)$, and there exist
words $\alpha_k'(x_1, \ldots , x_{n'(k)})$, 
for $k \in \{1,\ldots,m'\}$, such that
\begin{displaymath}
	r = \sum_{k=1}^{m'} \overline{\alpha_k'}( r_{(1,k)}' , \ldots , r_{(n'(k),k)}' )
\end{displaymath}
for some $G$-homogeneous $r_{(u,v)}' \in h(R)$ satisfying $r_{(p'(k),k)}' = a_z$, for $k \in \{1,\ldots,m'\}$.
Define $G$-homo\-geneous $s_{(u,v)}'^g \in h(R)$ by saying that
$s_{(i,j)}'^g = r_{(i,j)}'$, if $i \neq p'(k)$, and $s_{(p(k),k)}'^g = a_g$,
for all $k \in \{1,\ldots,m'\}$ and all $i \in \{1,\ldots,n'(k)\}$.
From Proposition \ref{change} we now get that
\begin{align*}
r c &= \sum_{k=1}^{m'} \sum_{g \in {\rm Supp}(a)}  \sum_{j=1}^m   
\overline{\alpha_k'}( r_{(1,k)}' , \ldots , r_{(n'(k),k)}' )  \overline{\alpha_j}( s_{(1,j)}^g , \ldots , s_{(n(j),j)}^g ) \\
&= \sum_{k=1}^{m'} \sum_{g \in {\rm Supp}(a)}  \sum_{j=1}^m
\overline{\alpha_k'}( s_{(1,k)}'^g , \ldots , s_{(n'(k),k)}'^g )  \overline{\alpha_j}( r_{(1,j)} , \ldots , r_{(n(j),j)} ) \\
&= \sum_{k=1}^{m'} \sum_{g \in {\rm Supp}(a)}  
\overline{\alpha_k'}( s_{(1,k)}'^g , \ldots , s_{(n'(k),k)}'^g ) \sum_{j=1}^m \overline{\alpha_j}( r_{(1,j)} , \ldots , r_{(n(j),j)} ) \\
&= \sum_{k=1}^{m'} \sum_{g \in {\rm Supp}(a)}  
\overline{\alpha_k'}( s_{(1,k)}'^g , \ldots , s_{(n'(k),k)}'^g ) \cdot 1  =
\sum_{k=1}^{m'} \sum_{g \in {\rm Supp}(a)}  1 \cdot \overline{\alpha_k'}( s_{(1,k)}'^g , \ldots , s_{(n'(k),k)}'^g ) \\
&= \sum_{k=1}^{m'} \sum_{g \in {\rm Supp}(a)} 
\sum_{j=1}^m \overline{\alpha_j}( r_{(1,j)} , \ldots , r_{(n(j),j)} )  \overline{\alpha_k'}( s_{(1,k)}'^g , \ldots , s_{(n'(k),k)}'^g )
\\
&=
\sum_{k=1}^{m'} \sum_{g \in {\rm Supp}(a)}  \sum_{j=1}^m
\overline{\alpha_j}( s_{(1,j)} , \ldots , s_{(n(j),j)} )   \overline{\alpha_k'}( r_{(1,k)}'^g , \ldots , r_{(n'(k),k)}'^g ) =cr.
\end{align*}
Now we show that $a = a_z c$.
From Proposition \ref{change} again, it follows that
\begin{align*}
a &= a \cdot 1 = 
\sum_{g \in {\rm Supp}(a)} \sum_{j=1}^m a_g \overline{\alpha_j}( r_{(1,j)} , \ldots , r_{(n(j),j)} ) =
\sum_{g \in {\rm Supp}(a)} \sum_{j=1}^m a_z \overline{\alpha_j}( s_{(1,j)}^g , \ldots , s_{(n(j),j)}^g ) \\
&= a_z \sum_{g \in {\rm Supp}(a)} \sum_{j=1}^m \overline{\alpha_j}( s_{(1,j)}^g , \ldots , s_{(n(j),j)}^g ) =
a_z \sum_{g \in {\rm Supp}(a)} c(g) = a_z c.
\end{align*}
\end{proof}

\begin{prop}\label{generated}
If $R$ is $G$-graded simple and $I$
is a $G/Z(G)$-graded ideal of $R$, then
$I = R(I \cap R_{Z(G)} \cap Z(R))$.
\end{prop}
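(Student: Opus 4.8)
The plan is to prove the two inclusions separately, writing $J := I \cap R_{Z(G)} \cap Z(R)$ for brevity. For $RJ \subseteq I$, note that $J \subseteq I$ and that, since $J \subseteq Z(R)$, each $c \in J$ is nuclear and central, so that $s(rc) = (sr)c$ and $(rc)s = (rs)c$ for all $r,s \in R$; hence $Rc$ is a two-sided ideal and therefore $RJ$ is a two-sided ideal contained in $I$. The substance of the proposition is the reverse inclusion $I \subseteq RJ$.

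First I would dispose of an easy case. If $I$ contains a nonzero \emph{homogeneous} element $a$, then by Proposition \ref{gradedidealprop} the ideal $\langle a \rangle$ is a nonzero graded ideal contained in $I$, so graded simplicity forces $\langle a \rangle = R$ and hence $I = R$; as $1 \in R_e \cap Z(R)$ by Proposition \ref{NFVOPropNonAssoc}(a), this gives $1 \in J$ and $I = R \cdot 1 \subseteq RJ$. Thus I may assume that every nonzero element of $I$ has support of size at least $2$; in particular every set satisfying MS with respect to $I$ has at least two elements, so Proposition \ref{change} is non-vacuous. Since $I$ is $G/Z(G)$-graded, it then suffices to show that every $G/Z(G)$-homogeneous $r \in I$ lies in $RJ$, and I would do this by downward induction on $|\supp(r)|$.

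Given such an $r$, lying in the coset $h_0 Z(G)$, choose a nonzero $x_0 \in I$ of minimal support subject to $\supp(x_0) \subseteq \supp(r)$; then $\supp(x_0)$ satisfies MS. Fixing $a \in \supp(x_0)$ and using $R_e \neq \{0\}$ (so $R_{a^{-1}\supp(x_0)} \supseteq R_e \neq \{0\}$), Proposition \ref{translation} transports $\supp(x_0)$ by $a^{-1}$ into $Z(G)$, producing a nonzero $x \in I \cap R_{Z(G)}$ whose support $a^{-1}\supp(x_0)$ again satisfies MS and contains $e$. The crux of the whole argument is to show that \emph{$x$ is central}, i.e. $x \in J$. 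Here Proposition \ref{change} is the essential tool: interchanging two homogeneous components of $x$ inside a word yields, for homogeneous $t,u \in h(R)$, identities such as $\delta_g x_h = \delta_h x_g$ and $x_g \delta_h = x_h \delta_g$ (where $\delta_g = x_g t - t x_g$) together with their associator analogues. Feeding these back into the minimal support condition—and using that in the case at hand $I$ has no nonzero element of singleton support—should force every commutator $[x,t]$ and every associator $(x,t,u)$, $(t,x,u)$, $(t,u,x)$ to vanish, since a nonzero such expression would be an element of $I$ supported on a proper subset of the MS set $\deg(t)\supp(x)$ (resp. a translate thereof), a contradiction. By Proposition \ref{intersection} this gives $x \in Z(R)$. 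I expect this centrality step to be the main obstacle.

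Once $x \in J$ is available, the induction closes routinely. Separating degrees (distinct $z \in Z(G)$ produce distinct degrees $zk$) shows that each homogeneous component $x_z$ of the central element $x$ is itself central; and any nonzero homogeneous central element generates a \emph{graded} ideal, which equals $R$ by graded simplicity, so $x_z$ is a unit with central inverse by Propositions \ref{NFVOPropNonAssoc}(b) and \ref{centerInvClosed}. Choosing a nonzero component $c = x_{z_0}$ and setting $w = r_{a z_0}\, c^{-1} \in R_a$, the element $w x \in RJ$ has support inside $\supp(x_0) \subseteq \supp(r)$ and satisfies $(wx)_{a z_0} = w x_{z_0} = r_{a z_0}$. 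Hence $r - w x \in I$ is $G/Z(G)$-homogeneous with $\supp(r - wx) \subsetneq \supp(r)$, so $r - wx \in RJ$ by the induction hypothesis, and therefore $r \in RJ$. This yields $I \subseteq RJ$ and completes the proof.
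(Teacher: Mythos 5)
Your overall architecture (the easy homogeneous case, transporting an MS set into $Z(G)$ via Proposition \ref{translation}, and the closing induction on support size) parallels the paper's Cases 2 and 3, but the step you yourself flag as the crux --- that the minimal-support element $x \in I \cap R_{Z(G)}$ with $e \in \supp(x)$ is \emph{itself} central --- is not merely hard: it is false, even under your standing assumption that $I$ contains no nonzero homogeneous element. Counterexample (associative, hence covered by the proposition): let $T = M_2(\C)$, let $d = \mathrm{diag}(1,-1) \in T$, and let $R = T \rtimes_\sigma \Z$ be the skew group ring in which $\sigma_n$ is conjugation by $d^n$, graded by $G = \Z = Z(G)$, so that $G/Z(G)$ is trivial and every ideal is $G/Z(G)$-graded. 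Here $T$ is simple, so $R$ is graded simple by Proposition \ref{propGsimple}; moreover $w := d u_1$ is central and invertible, and $R \cong M_2(\C[w,w^{-1}])$. Take $I = R(1+w)$, a nonzero proper ideal. Comparing lowest and highest coefficients shows that no nonzero element of $I$ is homogeneous, and that the elements of $I$ with support $\{0,1\}$ are exactly $b(1+w)$ with $b \in T \setminus \{0\}$; thus $\{0,1\}$ satisfies MS and contains $e$. But $x = E_{11}(1+w)$ is such an element and is not central, since $[x,E_{12}] = E_{12}(1+w) \neq 0$. Your swap identities $\delta_g x_h = \delta_h x_g$ and $x_g \delta_h = x_h \delta_g$ do follow from Proposition \ref{change}, and they hold in this example (all four products vanish), yet $\delta_0 = E_{12}$ and $\delta_1 = E_{12}w$ are both nonzero. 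The precise point where your MS argument breaks is the claim that a nonzero commutator would be ``supported on a proper subset of the MS set $\deg(t)\supp(x)$'': nothing forces properness, and indeed here $[x,E_{12}]$ has support equal to the \emph{entire} MS set $\{0,1\}$, so minimality gives no contradiction.

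What is true --- and what the paper's Case 1 actually proves --- is that $x$ admits a central \emph{companion} rather than being central itself: $x = x_e c$ for some $c \in I \cap R_{Z(G)} \cap Z(R)$ with $c_e = 1$ and $\supp(c) = \supp(x)$ (in the example $c = 1+w$, while all the non-centrality sits in the factor $x_e = E_{11}$). Producing $c$ needs an idea beyond support-counting: since $\langle x_e \rangle = R$ by graded simplicity, every $r \in R$ is a sum of homogeneous specializations of linear words with $x_e$ in a marked slot (Proposition \ref{gradedidealprop}); replacing $x_e$ by $x_g$ in that slot defines a map $f_g : R \rightarrow R$, whose well-definedness is exactly what Proposition \ref{change} combined with graded simplicity delivers. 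Each $f_g$ is a left and right $R$-module map, so $f_g(1) \in C(R) \cap N_l(R) \cap N_r(R) = Z(R)$ by Proposition \ref{intersection}, and $c := \sum_{g \in \supp(x)} f_g(1)$ (which, by linearity of the words, is obtained by substituting the whole element $x$ for $x_e$, hence lies in $I$) satisfies $x = x_e c$ and $c_e = 1$. Once you have this $c$, your concluding induction survives with a small change: instead of multiplying by an inverse of a homogeneous component of $x$ (which need not exist --- $E_{11}$ is not invertible), subtract $r_a c$ from $r$, where $a \in \supp(x_0)$ is the element used for the translation; then $(r - r_a c)_a = r_a - r_a c_e = 0$, the support shrinks, and the induction hypothesis applies, exactly as in the paper's Cases 2 and 3.
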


\begin{proof}
The inclusion $I \supseteq R ( I \cap R_{Z(G)} \cap Z(R) )$
is clear. Now we show the inclusion
$I \subseteq R (I \cap R_{Z(G)} \cap Z(R))$.
Take a non-zero $a \in I$.

Case 1: $\supp(a)$ satisfies MS and 
$a \in R_{Z(G)}$. Take $z \in \supp(a)$.
From Lemma \ref{lemmacentral} it follows that
$a = a_z c$ for some $c \in I \cap R_{Z(G)} \cap Z(R)$.

Case 2: $\supp(a)$ satisfies MS.
Since $I$ is $G/Z(G)$-graded, we can assume that $a$ is $Z(G)$-homogeneous.
By Proposition~\ref{NFVOPropNonAssoc}\eqref{unit} and Proposition~\ref{translation},
we thus get that there is a non-zero $b \in I \cap R_{Z(G)}$ and 
$h \in \supp(a)$ such that $\supp(b) = h^{-1} \supp(a)$ satisfies MS.
By case 1, (with $z=e$) we get that 
there is $c \in I \cap R_{Z(G)} \cap Z(R)$
such that $b = b_e c$, $c_e = 1$ and $\supp(c) = \supp(b)$.
Then $a - a_h c \in I$ and $\supp(a - a_h c) \subsetneq \supp(a)$.
Hence, using that $\supp(a)$ satisfies MS, we get that 
$a = a_h c \in R (I \cap R_{Z(G)} \cap Z(R))$.

Case 3: $\supp(a)$ does not satisfy MS.
We prove, by induction, that $a \in R(I \cap R_{Z(G)} \cap Z(R))$.
Suppose that for every non-zero $b \in I$
with $\supp(b) \subsetneq \supp(a)$,
the relation $b \in R(I \cap R_{Z(G)} \cap Z(R))$ holds. 
Take a proper non-empty subset $X$ of $\supp(a)$
and $b \in I$ with $\supp(b)=X$ satisfying MS.
Take $h \in \supp(b)$.
By Case 2, there is $c \in I \cap R_{Z(G)} \cap Z(R)$
with $c_e = 1$ and $\supp(c) = h^{-1} \supp(b)$.
Then $\supp(a - a_h c) \subsetneq \supp(a)$
and $a - a_h c \in I$. Hence, by the induction hypothesis,
we get that $a - a_h c \in R(I \cap R_{Z(G)} \cap Z(R))$.
But since obviously $a_h c \in R(I \cap R_{Z(G)} \cap Z(R))$,
we finally get that $a \in R(I \cap R_{Z(G)} \cap Z(R))$.
\end{proof}

\begin{cor}\label{corgenerated}
If $R$ is $G$-graded simple and $Z(R)$ is a field,
then $R$ is $G/Z(G)$-graded simple.
\end{cor}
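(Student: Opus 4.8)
The plan is to reduce the statement entirely to Proposition~\ref{generated}, which already carries out the essential work. Let $I$ be a non-zero $G/Z(G)$-graded ideal of $R$; my aim is to show that $I = R$, which is precisely what $G/Z(G)$-graded simplicity demands. Since $R$ is graded simple by hypothesis, Proposition~\ref{generated} applies directly to $I$ and yields the representation $I = R(I \cap R_{Z(G)} \cap Z(R))$.

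Next I would observe that the generating set $I \cap R_{Z(G)} \cap Z(R)$ cannot be zero. Indeed, if it were $\{0\}$, then $R(I \cap R_{Z(G)} \cap Z(R)) = R \cdot \{0\} = \{0\}$, which would force $I = \{0\}$, contrary to the choice of $I$. Hence I may pick a non-zero element $z \in I \cap R_{Z(G)} \cap Z(R)$. Since $z$ lies in $Z(R)$, which is a field by hypothesis, $z$ admits an inverse $z^{-1} \in Z(R) \subseteq R$, and $z^{-1} z = 1$ because $Z(R)$ is commutative.

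To finish, since $z \in I$ and $I$ is an ideal of $R$, I would conclude that $1 = z^{-1} z \in R \cdot I \subseteq I$, so that $I = R$. This establishes that $R$ is $G/Z(G)$-graded simple. I expect no genuine obstacle here: the full difficulty of the corollary has been absorbed into Proposition~\ref{generated}, and the remaining steps are purely formal. The only points deserving a moment's attention are the identity $R \cdot \{0\} = \{0\}$, used to rule out a trivial generating set for a non-zero ideal, and the commutativity of $Z(R)$, which guarantees that $z^{-1} z = 1$ rather than merely $z z^{-1} = 1$.
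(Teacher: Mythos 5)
Your proof is correct and follows essentially the same route as the paper: apply Proposition~\ref{generated} to extract a non-zero element of $I \cap Z(R)$, then use the field hypothesis on $Z(R)$ to conclude $1 \in I$, hence $I = R$. The extra details you supply (that $R\cdot\{0\}=\{0\}$ and that ideals absorb multiplication so $z^{-1}z \in I$) are exactly the steps the paper leaves implicit.
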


\begin{proof}
Suppose that $I$ is a non-zero $G/Z(G)$-graded ideal of $R$.
By Proposition \ref{generated}, $I$ contains 
a non-zero element from $Z(R)$. Since $Z(R)$
is a field this implies that $I = R$.
Thus $R$ is $G/Z(G)$-graded simple.
\end{proof}

\begin{rem}\label{ascending}
Recall that if $G$ is a group with identity
element $e$, then the ascending central series 
of $G$ is the sequence of subgroups $Z_i(G)$, for
non-negative integers $i$, defined recursively by
$Z_0(G) = \{ e \}$ and, given $Z_i(G)$, for
a non-negative integer $i$, 
$Z_{i+1}(G)$ is defined to be the set of 
$g \in G$ such that for every $h \in G$,
the commutator $[g,h]=g h g^{-1} h^{-1}$
belongs to $Z_i(G)$.
For infinite groups this process can be
continued to infinite ordinal numbers
by transfinite recursion. For a limit ordinal
$O$, we define $Z_O(G) = \cup_{i < O} Z_i(G)$.
If $G$ is hypercentral, then
$Z_O(G) = G$ for some limit ordinal $O$.
For details concerning this 
construction, see \cite[p. 28]{robinson1972}.
\end{rem}

\begin{prop}\label{foreachi}
If $G$ is a hypercentral group and
$R$ is a $G$-graded ring with the 
property that for each $i < O$
the ring $R$ is $G/Z_i(G)$-graded simple,
then $R$ is simple.
\end{prop}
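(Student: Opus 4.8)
The plan is to reduce simplicity to a statement about single generators: I would show that for every non-zero $a \in R$ the ideal $\langle a \rangle$ equals $R$, which is equivalent to $R$ being simple. Indeed, if this holds then any non-zero ideal $I$ contains some non-zero $a$, whence $R = \langle a \rangle \subseteq I$ forces $I = R$. So fix an arbitrary non-zero $a \in R$ and aim for $\langle a \rangle = R$.

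The crucial point is that $\supp(a)$ is a \emph{finite} subset of $G$. By Remark \ref{ascending}, since $G$ is hypercentral, $O$ is a limit ordinal with $G = Z_O(G) = \cup_{i < O} Z_i(G)$, an ascending union of the terms of the central series. Each of the finitely many elements of $\supp(a)$ therefore lies in some $Z_i(G)$ with $i < O$, and because $O$ is a limit ordinal the maximum of these finitely many indices is again an ordinal strictly below $O$. Hence there is a single $i < O$ with $\supp(a) \subseteq Z_i(G)$.

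For this $i$, the element $a$ is homogeneous with respect to the $G/Z_i(G)$-gradation: its support lies inside the identity coset $Z_i(G)$, so $a$ belongs to the identity component $R_{Z_i(G)}$ of that gradation. Applying Proposition \ref{gradedidealprop} with respect to the $G/Z_i(G)$-gradation, taking $A = \{ a \}$ (a single homogeneous element), I conclude that $\langle a \rangle$ is a $G/Z_i(G)$-graded ideal, and it is non-zero because $a \neq 0$. By hypothesis $R$ is $G/Z_i(G)$-graded simple for this $i < O$, so $\langle a \rangle = R$, which is exactly what was needed.

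I expect the only genuine subtlety to be the support-capturing step: one must combine the finiteness of $\supp(a)$ with the fact that $O$ is a limit ordinal, so that a finite subset of the strictly increasing union $\cup_{i < O} Z_i(G)$ is already contained in a single $Z_i(G)$ with $i < O$. Once $a$ has been pushed into one $G/Z_i(G)$-homogeneous component, the conclusion is immediate from the assumed graded simplicity together with the graded-ideal description in Proposition \ref{gradedidealprop}, and no further computation is required.
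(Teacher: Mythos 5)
Your proof is correct and follows essentially the same route as the paper: fix a non-zero $a$ in a non-zero ideal, use finiteness of $\supp(a)$ together with $G = \cup_{i<O} Z_i(G)$ to place $\supp(a)$ inside a single $Z_i(G)$, observe that $\langle a \rangle$ is then a non-zero $G/Z_i(G)$-graded ideal, and invoke the assumed $G/Z_i(G)$-graded simplicity. Your extra care about the limit-ordinal maximum and the explicit appeal to Proposition \ref{gradedidealprop} only make explicit what the paper leaves implicit.
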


\begin{proof}
Take a non-zero ideal $J$ of $R$
and a non-zero $a \in J$.
We show that $\langle a \rangle = R$.
Since $\cup_i Z_i(G) = G$ and $\supp(a)$,
the support of $a$ with respect to the $G$-gradation,
is finite, we can conclude that
there is some $i$ such that $\supp(a) \subseteq Z_i(G)$.
Then $\langle a \rangle$ is a non-zero 
$G / Z_i(G)$-graded ideal of $R$.
Since $R$ is $G/Z_i(G)$-graded simple,
we get that $\langle a \rangle = R$, which shows that $J=R$.
\end{proof}

\noindent
{\bf Proof of Theorem \ref{maintheorem}.}
Suppose that $R$ is graded simple and that $Z(R)$ is a field.
Let $Z_i = Z_i(G)$, for $i \geq 0$,
denote the ascending central series of $G$
(see Remark \ref{ascending}).
By Proposition \ref{foreachi}, we are done if we can show
that $R$ is $G/Z_i$-graded simple for each $i \geq 0$.
The base case $i=0$ holds since $R$ is $G$-graded simple.
Now we show the induction step.
Suppose that the statement is true for some $i$,
i.e. that $R$ is $G / Z_i$-graded simple.
By Corollary \ref{corgenerated}, we get that
$R$ is $\frac{ G/Z_i}{ Z(G/Z_i) }$-graded simple.
Since the center of $G / Z_i$ equals 
$Z_{i+1}/Z_i$ we get that $R$ is 
$\frac{ G/Z_i }{ Z_{i+1}/Z_i }$-graded simple,
i.e. $R$ is $G/Z_{i+1}$-graded simple, 
and the induction step is complete.
\qed

\begin{defn}
For a $G$-graded ring $R$, we put $\supp(R)=\{g \in G \mid R_g \neq \{0\} \}$.
The ring $R$ said to be {\it faithfully $G$-graded} 
(see e.g. \cite{cohen1984}) if for any 
$g,h \in \supp(R)$ and any non-zero $r \in R_g$
we have that $r R_h \neq \{ 0 \} \neq R_h r$.
Recall that the {\it finite conjugate subgroup} of $G$,
denoted by $\Delta(G)$, is the set of elements of $G$
which only have finitely many conjugates (see e.g. \cite{passman1977}).
\end{defn}

\begin{prop}\label{faithfuldeltaG}
Suppose that $R$ is faithfully $G$-graded and that $\supp(R)=G$.
Then $C(R) \subseteq R_{\Delta(G)}$, and,
in particular, $Z(R) \subseteq R_{\Delta(G)}$.
\end{prop}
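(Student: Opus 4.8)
The plan is to prove the stronger inclusion $C(R) \subseteq R_{\Delta(G)}$, from which the assertion about $Z(R)$ follows immediately, since $Z(R) = N(R) \cap C(R) \subseteq C(R)$. I would fix a non-zero $r \in C(R)$ and write $r = \sum_{g \in \supp(r)} r_g$, where each $r_g \in R_g$ is non-zero. As every element of a graded ring has finite support, it suffices to show that each $g \in \supp(r)$ has only finitely many conjugates in $G$; in fact I would show the sharper statement that the entire conjugacy class of each such $g$ is contained in the finite set $\supp(r)$.

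The key step I would carry out is a homogeneous-component comparison. Fixing $g \in \supp(r)$ and an arbitrary $h \in G$, I note that $R_h \neq \{0\}$ because $\supp(R) = G$. Using only distributivity together with the grading inclusions $R_{g'} R_h \subseteq R_{g'h}$ and $R_h R_{g'} \subseteq R_{hg'}$ (which hold regardless of associativity, since no triple products occur), I would write
\[
	rs = \sum_{g' \in \supp(r)} r_{g'} s, \qquad sr = \sum_{g' \in \supp(r)} s\, r_{g'}, \qquad s \in R_h,
\]
with $r_{g'} s \in R_{g'h}$ and $s\, r_{g'} \in R_{hg'}$. Because $g' \mapsto g'h$ and $g' \mapsto hg'$ are injective, comparing the homogeneous components of degree $gh$ in the identity $rs = sr$ (which holds as $r \in C(R)$) yields, for every $s \in R_h$,
\[
	r_g s = s\, r_{h^{-1} g h}.
\]

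Next I would invoke faithfulness. Since $r_g \neq 0$ and $h \in \supp(R)$, the condition $r_g R_h \neq \{0\}$ lets me choose $s \in R_h$ with $r_g s \neq 0$. For this $s$ the identity $r_g s = s\, r_{h^{-1}gh}$ forces $r_{h^{-1}gh} \neq 0$, that is, $h^{-1} g h \in \supp(r)$. As $h$ ranges over $G$, the element $h^{-1} g h$ ranges over the whole conjugacy class of $g$, so that class is contained in the finite set $\supp(r)$, whence $g \in \Delta(G)$. Since $g \in \supp(r)$ was arbitrary, $\supp(r) \subseteq \Delta(G)$, i.e. $r \in R_{\Delta(G)}$, which completes the argument.

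I expect the only delicate point—rather than a genuine obstacle—to be the bookkeeping that confirms non-associativity never intrudes: throughout, homogeneous elements are multiplied only two at a time, so the grading relations and distributivity suffice and no re-association is ever required. This is exactly what allows the classical associative $\Delta$-argument to transfer unchanged to the present non-associative setting.
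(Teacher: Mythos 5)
Your proof is correct and follows essentially the same route as the paper's: both use faithfulness to produce an element $s \in R_h$ whose product with $r_g$ is non-zero, and then exploit $rs = sr$ to conclude that a conjugate of $g$ lies in the finite set $\supp(r)$, whence $g \in \Delta(G)$. The only cosmetic differences are that you compare homogeneous components explicitly (obtaining $r_g s = s\, r_{h^{-1}gh}$ for all $s \in R_h$) and invoke $r_g R_h \neq \{0\}$, whereas the paper argues directly at the level of supports via $hg \in \supp(sr) = \supp(rs) \subseteq \supp(r)h$ using $R_h r_g \neq \{0\}$.
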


\begin{proof}
Take a non-zero $r \in C(R)$ and $g \in \supp(r)$.
Take $h \in G$. From the assumptions it follows that
there is $s \in R_h$ with $s r_g \neq 0$.
Since $sr = rs$ we get that 
$hg \in \supp(sr) = \supp(rs) \subseteq \supp(r)h$.
Thus $hgh^{-1} \in \supp(r)$.
Since $\supp(r)$ is finite, we get that $g \in \Delta(G)$.
The last statement follows immediately since $Z(R) \subseteq C(R)$. 
\end{proof}

\begin{cor}\label{centerRE}
If $R$ is a faithfully $G$-graded ring with $\supp(R)=G$,
where $G$ is a torsion-free hypercentral group, then $R$ is simple
if and only if $R$ is graded simple and $Z(R) \subseteq R_e$.
\end{cor}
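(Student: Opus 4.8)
The plan is to deduce the corollary from Theorem~\ref{maintheorem} by showing that, in the presence of graded simplicity, the two conditions ``$Z(R)$ is a field'' and ``$Z(R) \subseteq R_e$'' are equivalent; the extra hypotheses (faithfulness, $\supp(R)=G$, torsion-freeness) are exactly what is needed to upgrade ``field'' to ``concentrated in degree $e$''. Since a torsion-free hypercentral group is in particular hypercentral, Theorem~\ref{maintheorem} applies: $R$ is simple if and only if it is graded simple and $Z(R)$ is a field. So it suffices to interpolate the condition $Z(R)\subseteq R_e$ between these.

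For the implication ``$Z(R) \subseteq R_e$ and graded simple $\Rightarrow R$ simple'' I would first check that $Z(R)$ is then a field, after which Theorem~\ref{maintheorem} finishes the job. Given a non-zero $z \in Z(R) \subseteq R_e$, the set $Rz$ is a non-zero two-sided ideal (the one-sided absorption identities follow from $z$ being nuclear and central), and it is graded because $z$ is homogeneous of degree $e$, so $R_g z \subseteq R_g$ for all $g$. Graded simplicity then forces $Rz = R$, hence $sz = 1$ for some $s \in R$; since $z$ is central we also get $zs = 1$, and Proposition~\ref{centerInvClosed} gives $s \in Z(R)$. Thus every non-zero element of $Z(R)$ is invertible in $Z(R)$. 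Conversely, if $R$ is simple then it is trivially graded simple (graded ideals are ideals), and $Z(R)$ is a field by Proposition~\ref{centerfield}; it remains only to prove $Z(R) \subseteq R_e$.

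To obtain $Z(R) \subseteq R_e$ from ``$Z(R)$ is a field'', I would argue in two steps. First, Proposition~\ref{faithfuldeltaG} gives $\supp(Z(R)) \subseteq \Delta(G)$, and here I would invoke the key group-theoretic input that $\Delta(G) = Z(G)$ for a torsion-free hypercentral group $G$ (which I would cite or establish separately). Hence $\supp(Z(R)) \subseteq Z(G)$, and this lets me prove that $Z(R)$ is a \emph{graded} subring: writing $z = \sum_g z_g \in Z(R)$ with each $g \in Z(G)$, the identity $sz = zs$ for homogeneous $s$ compares, in each degree $hg = gh$, to $s z_g = z_g s$, while the three associators $(z,r,t)$, $(r,z,t)$, $(r,t,z)$ decompose into components sitting in pairwise distinct degrees and must therefore vanish termwise; thus every homogeneous component $z_g$ lies in $C(R) \cap N(R) = Z(R)$. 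Second, since $Z(G)$ is torsion-free abelian it carries a total group order, so I can run a leading-term argument on the graded field $Z(R)$: for non-zero $w$ with inverse $w^{-1}$, comparing the top and bottom degrees in $w\,w^{-1} = 1$ forces $w^{-1}$, and hence $w$, to be homogeneous. As every non-zero element of $Z(R)$ is then homogeneous and $1 \in R_e$, the support collapses to $\{e\}$, i.e. $Z(R) \subseteq R_e$.

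The main obstacle is precisely this passage from ``$Z(R)$ is a field'' to ``$Z(R) \subseteq R_e$''. It rests on the group-theoretic fact $\Delta(G) = Z(G)$ for torsion-free hypercentral $G$, together with the two structural claims that a central element with support in $Z(G)$ has central homogeneous components, and that a field graded by a torsion-free abelian group is concentrated in degree $e$. The remaining ingredients---the reduction through Theorem~\ref{maintheorem}, the graded-ideal argument, and the verification that $Rz$ is an ideal---are routine given the nuclear and central properties recorded in Section~\ref{sectionnonassociativerings}.
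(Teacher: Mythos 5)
Your proposal is correct and matches the paper's own argument in all essentials: the ``if'' direction via the graded ideal $Rz$ and Proposition~\ref{centerInvClosed}, and the ``only if'' direction via Theorem~\ref{maintheorem}, Proposition~\ref{faithfuldeltaG}, McLain's theorem $\Delta(G)=Z(G)$ for torsion-free hypercentral groups, the fact that $Z(R)$ is a $Z(G)$-graded subring, and a leading-term argument using a total order on the torsion-free abelian group $Z(G)$. The only cosmetic differences are that you conclude by showing every non-zero element of the graded field $Z(R)$ is homogeneous (the paper instead derives a contradiction from the inverse of $1+r$), and that you spell out the associator-component argument for centrality of homogeneous components, which the paper leaves largely implicit.
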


\begin{proof}
First we show the ''if'' statement.
Suppose that $R$ is graded simple and that $Z(R) \subseteq R_e$.
We claim that $Z(R)$ is a field.
If we assume that the claim holds, then, from Theorem \ref{maintheorem},
we get that $R$ is simple. Now we show the claim.
From Section \ref{sectionnonassociativerings}, 
we know that $Z(R)$ is a commutative unital ring.
Take a non-zero $r \in Z(R)$. Since $r \in R_e$,
we get that $\langle r \rangle = R r = r R$ is a non-zero graded ideal.
By graded simplicity of $R$, we get that there is $s \in R$
with $rs = sr = 1$, which by Proposition~\ref{centerInvClosed} yields $s\in Z(R)$.
This shows that $Z(R)$ is a field.

Now we show the ''only if'' statement.
Suppose that $R$ is simple. From Theorem \ref{maintheorem},
we get that $R$ is graded simple and that $Z(R)$ is a field.
We wish to show that $Z(R) \subseteq R_e$.
From \cite[Theorem 1]{mclain1956} it follows that
$\Delta(G) = Z(G)$. By Proposition \ref{faithfuldeltaG},
we get that $Z(R) \subseteq R_{Z(G)}$.
Thus, $Z(R)$ is a $Z(G)$-graded ring.
In fact, suppose that $r \in Z(R)$ and $s \in R$.
Take $g \in G$ and $h \in Z(G)$. From the equality $r s_g = s_g r$
and the fact that $\supp(r) \subseteq Z(G)$, we get that
$r_h s_g = s_g r_h$. Summing over $g \in G$, we get that
$r_h s = s r_h$. Thus, $r_h \in Z(R)$.
It is well-known that since the group $Z(G)$ 
is torsion-free and abelian, it can be
equipped with a total order
$<$ (see e.g. \cite{Levi1942}).
Seeking a contradiction, suppose that there is a non-zero $r \in Z(R) \cap R_g$
for some $g \in Z(G) \setminus \{e\}$.
Since $Z(R)$ is a field, the element $1 + r$ must 
have a multiplicative inverse $s \in Z(R)$.
Suppose that $s = \sum_{i=1}^n s_{g_i}$,
for some $g_1,\ldots,g_n \in Z(G)$ with
$g_1 < \cdots < g_n$ and every $s_{g_i}$ non-zero
and $n$ chosen as small as possible. Then
$1 = (1 + r)s = \sum_{i=1}^n ( s_{g_i} + r s_{g_i} )$.
Case 1: $g > 0$.
Then the element $s_{g_1}$, in the last sum, has the unique smallest degree.
Thus, $1 = s_{g_1}$ and hence
$0 = r + \sum_{i=2}^n ( s_{g_i} + r s_{g_i} ).$
In this sum the element $r s_{g_n}$ has the unique largest degree.
Thus $r s_{g_n} = 0$. Since $r \neq 0$ and $Z(R)$ is a field,
we get that $s_{g_n} = 0$.
This contradicts the minimality of $n$.
Thus $Z(R) \subseteq R_e$.
Case 2: $g < 0$. This case is treated analogously to Case 1.
\end{proof}

\section{Application: Non-associative crossed products}\label{nonassociativecp}

In this section, we begin by recalling the (folkloristic) 
definitions of non-associative crossed
systems and non-associative crossed products
(see Definition \ref{defcrossedsystem} and Definition~\ref{defcrossedproduct}).
Then we show that the family of non-associative crossed products 
appear in the class of non-associative strongly graded rings in a fashion similar 
to how the associative crossed products present themselves 
in the family of associative strongly graded rings
(see Proposition~\ref{prop:crossedprod} and Proposition~\ref{prop:characterize}).
Thereafter, we determine the center of non-associative
crossed products (see Proposition \ref{centercrossed}).
At the end of this section, we use Theorem \ref{maintheorem} to obtain
non-associative versions of Theorem~\ref{bell} and Theorem~\ref{voskoglou}
(see Theorem~\ref{nonassociativebell} and
Theorem~\ref{nonassociativevoskoglou}).

\begin{defn}\label{defcrossedsystem}
A {\it non-associative crossed system} is a quadruple
$(T , G , \sigma , \alpha)$ consisting of a 
non-associative unital ring $T$, a group $G$ and
maps $\sigma : G \rightarrow {\rm Aut}(T)$ and
$\alpha : G \times G \rightarrow N(T)^{\times}$
satisfying the following three conditions for
any triple $g,h,s \in G$ and any $a \in T$:
\begin{itemize}

\item[(N1)] $\sigma_g ( \sigma_h (a) ) = \alpha(g,h) \sigma_{gh}(a) \alpha(g,h)^{-1}$;

\item[(N2)] $\alpha(g,h) \alpha(gh,s) = \sigma_g( \alpha(h,s) ) \alpha(g,hs)$;

\item[(N3)] $\sigma_e = \identity_T$ and $\alpha(g,e) = \alpha(e,g) = 1_T$.

\end{itemize}
\end{defn}

\begin{defn}\label{defcrossedproduct}
Suppose that $(T , G , \sigma , \alpha)$ is a 
non-associative crossed system.
The corresponding {\it non-associative crossed product},
denoted by $T \rtimes_{\sigma}^{\alpha} G$,
is defined as the set of finite sums $\sum_{g \in G} t_g u_g$
equipped with coordinate-wise addition and multiplication
defined by the bi-additive extension of the relations
	$(a u_g) (b u_h) = a \sigma_g(b) \alpha(g,h) u_{gh}$,
for $a,b \in T$ and $g,h \in G$.
\end{defn}

\begin{rem}
(a)
The so-called \emph{canonical $G$-gradation on $R=T \rtimes_{\sigma}^{\alpha} G$}
is obtained by putting $R_g=Tu_g$, for $g\in G$.

(b)
Notice that $R=T \rtimes_{\sigma}^{\alpha} G$
is a unital ring with identity element $u_e = 1_T u_e$.
\end{rem}

All non-associative crossed products share 
the following properties.

\begin{prop}\label{prop:crossedprod}
Let $R=T \rtimes_{\sigma}^{\alpha} G$ be a non-associative crossed product,
equipped with its canonical $G$-gradation.
The following two assertions hold:
\begin{enumerate}[{\rm (a)}]
	\item\label{propintersectionunit}
	For each $g \in G$, the intersection
$R_g \cap N(R)^{\times}$ is non-empty;
	\item\label{crossedfaithful}
	$R$ is strongly $G$-graded, and,
in particular, faithfully $G$-graded.
\end{enumerate}
\end{prop}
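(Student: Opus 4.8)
The plan is to exhibit, for each $g \in G$, a single concrete element of $R_g$ that is simultaneously nuclear and invertible, namely the canonical generator $u_g = 1_T u_g$. I would first record that $u_e = 1_T u_e$ is the two-sided multiplicative identity of $R$: the defining relation together with condition (N3) gives $(a u_g) u_e = a u_g$ and $u_e(a u_g) = a u_g$ for all $a \in T$ and $g \in G$. Part (a) then reduces to proving $u_g \in N(R)^{\times}$, and by Proposition~\ref{propNRunit} it is enough to check separately that $u_g$ is invertible and that $u_g$ lies in the nucleus $N(R)$.

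For invertibility I would take the explicit candidate $v := \alpha(g^{-1},g)^{-1} u_{g^{-1}} \in R_{g^{-1}}$ and verify $u_g v = v u_g = u_e$. One side is immediate from the multiplication rule; the only delicate point is that the \emph{same} $v$ serves on both sides. This is exactly where the cocycle condition enters: specializing (N2) to the triple $(g,g^{-1},g)$ and simplifying with (N3) yields the identity $\sigma_g(\alpha(g^{-1},g)) = \alpha(g,g^{-1})$, which is precisely what reconciles the two computations. Here, and throughout, I would exploit that $\alpha$ is valued in $N(T)^{\times}$, so that every triple product in $T$ involving an $\alpha$-value may be rebracketed freely.

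To show $u_g \in N(R)$ I would use that the associator is additive in each argument, so it suffices to test the three nucleus conditions on homogeneous arguments $a u_h$ and $b u_k$. Each resulting identity reduces, after expanding both bracketings via the multiplication rule and pushing $\sigma_g$ through products (it is an automorphism and preserves $N(T)$), to an instance of (N1) — which straightens the composite $\sigma_g \circ \sigma_h$ — together with (N2) — which matches up the two $\alpha$-factors; for instance the right-nucleus condition $(a u_h, b u_k, u_g)=0$ collapses to (N2) applied to $(h,k,g)$. I expect this threefold associator computation to be the main obstacle: it is not deep, but it is the place where one must be scrupulous about bracketing in the non-associative ring $T$ and about invoking the nuclearity of the $\alpha$-values at each regrouping. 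Combining the two verifications via Proposition~\ref{propNRunit} gives $u_g \in R_g \cap N(R)^{\times}$, which proves (a).

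For part (b), strong grading can be read off directly: given $c \in T$, the product $\bigl( (c\,\alpha(g,h)^{-1}) u_g \bigr)(1_T u_h)$ equals $c u_{gh}$ by the multiplication rule and the nuclearity of $\alpha(g,h)$, so $R_{gh} = T u_{gh} \subseteq R_g R_h$, while $R_g R_h \subseteq R_{gh}$ is the grading axiom; hence $R_g R_h = R_{gh}$. Faithfulness then follows ``in particular'': since each $R_g = T u_g$ is nonzero we have $\supp(R) = G$, and since $u_h$ and, by Proposition~\ref{propNRunit}, its inverse both lie in $N(R)$, the middle- and left-nucleus relations let me cancel $u_h$. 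Concretely, for nonzero $r \in R_g$ the equalities $(r u_h) u_h^{-1} = r(u_h u_h^{-1}) = r$ and $u_h^{-1}(u_h r) = (u_h^{-1} u_h) r = r$ force $r u_h \neq 0$ and $u_h r \neq 0$, whence $r R_h \neq \{0\} \neq R_h r$.
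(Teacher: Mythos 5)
Your proposal is correct and follows essentially the same route as the paper: both reduce part (a) via Proposition~\ref{propNRunit} to showing $u_g \in R_g \cap R^{\times} \cap N(R)$, invert $u_g$ by $\alpha(g^{-1},g)^{-1}u_{g^{-1}}$ (the paper writes this element as $\sigma_{g^{-1}}(\alpha(g,g^{-1})^{-1})u_{g^{-1}}$, which is the same by the (N2)-identity you cite), verify the three nucleus conditions on homogeneous elements using (N1) and (N2) exactly as you describe, and obtain strong grading from $t u_{gh} = \bigl((t\alpha(g,h)^{-1})u_g\bigr)(1_T u_h)$. Your one genuine addition is the explicit justification of the ``in particular, faithfully graded'' claim by cancelling the nuclear unit $u_h$; the paper leaves this implicit, and your care here is warranted, since the usual associative argument that strong grading implies faithful grading does not transfer verbatim to the non-associative setting.
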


\begin{proof}
(a)
Take $g \in G$. 
From Proposition \ref{propNRunit} it follows that it is enough
to show that $u_g \in R_g \cap R^{\times} \cap N(R)$.
Put $a = \alpha(g,g^{-1})^{-1}$ and $v = \sigma_{g^{-1}}(a) u_{g^{-1}}$. Then 
\begin{displaymath}
	u_g v = \sigma_g ( \sigma_{g^{-1}} (a) ) \alpha(g,g^{-1}) u_e =
aa^{-1} u_e = u_e
\end{displaymath}
and from (N2) we get $\sigma_{g^{-1}}(\alpha(g,g^{-1})) = \alpha(g^{-1},g)$
which yields
\begin{displaymath}
	v u_g = \sigma_{g^{-1}}(a) \alpha(g^{-1},g) u_e = u_e.
\end{displaymath}
Therefore, $u_g \in R_g \cap R^{\times}$.
Now we show that $u_g \in N(R)$.
To this end, take $a,b \in T$ and $s,t \in G$.
First we show that $u_g \in N_l(R)$. Using (N1) and (N2), we get that
\begin{align*}
	(u_g \cdot a u_s) \cdot b u_t &=
\sigma_g(a) \alpha(g,s) u_{gs} \cdot b u_t =
\sigma_g(a) \alpha(g,s) \sigma_{gs}(b) \alpha(gs,t) u_{gst} \\
&= \sigma_g(a) \alpha(g,s) \sigma_{gs}(b) \alpha(g,s)^{-1} \alpha(g,s) \alpha(gs,t) u_{gst} \\
&\stackrel{\mathclap{\mathrm{(N1)}}}{=} \sigma_g(a) \sigma_g( \sigma_s (b) )
\alpha(g,s)
\alpha(g,st) u_{gst} 
\stackrel{\mathrm{(N2)}}{=}
\sigma_g(a) \sigma_g( \sigma_s (b) )
\sigma_g( \alpha(s,t)) \alpha(g,st) u_{gst} \\
&= \sigma_g( a \sigma_s(b) \alpha(s,t) ) \alpha(g,st) u_{gst}
= u_g \cdot a \sigma_s(b) \alpha(s,t) u_{st}
= u_g \cdot (a u_s \cdot b u_t).
\end{align*}
Now we show that $u_g \in N_m(R)$. Using (N1) and (N2), we get that
\begin{align*}
	(a u_s \cdot u_g) \cdot b u_t
	&= a \alpha(s,g) u_{sg} \cdot b u_t
= a \alpha(s,g) \sigma_{sg}(b) \alpha(sg,t) u_{sgt} \\
&= a \alpha(s,g) \sigma_{sg}(b) \alpha(s,g)^{-1} \alpha(s,g) \alpha(sg,t) u_{sgt} 
\stackrel{\mathrm{(N1)}}{=} a \sigma_s( \sigma_g (b) ) \alpha(s,g) \alpha(sg,t) u_{sgt}
\\
&\stackrel{\mathclap{\mathrm{(N2)}}}{=} a \sigma_s( \sigma_g (b) ) \sigma_s( \alpha(g,t) ) \alpha(s , gt) u_{sgt}
= a \sigma_s( \sigma_g(b) \alpha(g,t) ) \alpha(s , gt) u_{sgt} \\
&= a u_s \cdot \sigma_g(b) \alpha(g,t) u_{gt}
= a u_s \cdot ( u_g \cdot b u_t ).
\end{align*}
Finally, we show that $u_g \in N_r(R)$. Using (N2), we get that
\begin{align*}
	(a u_s \cdot b u_t) \cdot u_g
	&= a \sigma_s(b) \alpha(s,t) u_{st} \cdot u_g
= a \sigma_s(b) \alpha(s,t) \alpha(st,g) u_{stg} \\
&\stackrel{\mathclap{\mathrm{(N2)}}}{=} a \sigma_s( b ) \sigma_s( \alpha(t,g) ) \alpha(s,tg) u_{stg}
= a \sigma_s( b \alpha(t,g) ) \alpha(s , tg) u_{stg} \\
&= a u_s \cdot b \alpha(t,g) u_{tg}
= a u_s \cdot (b u_t \cdot u_g).
\end{align*}

(b)
Take $g,h \in G$ and $t \in T$.
Clearly, the inclusion $R_g R_h \subseteq R_{gh}$ holds.
Notice that
$t u_{gh} = t \alpha(g,h)^{-1} \alpha(g,h) u_{gh} = ( t \alpha(g,h)^{-1} u_g ) u_h$.
This shows that $R_{gh} \subseteq R_g R_h$.
The faithfulness of the gradation is clear.
\end{proof}

In fact, the property of
Proposition~\ref{prop:crossedprod}\eqref{propintersectionunit}
characterizes non-associative
crossed products.

\begin{prop}\label{prop:characterize}
Every $G$-graded non-associative unital ring $R$
with the property that for each $g \in G$, the intersection
$R_g \cap N(R)^{\times}$ is non-empty,
is a non-associative crossed product of the form
$T \rtimes_{\sigma}^{\alpha} G$,
associated with a non-associative crossed system $(T,G,\sigma,\alpha)$.
\end{prop}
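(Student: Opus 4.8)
The plan is to build the crossed system directly from a choice of homogeneous nucleus-units and then exhibit an explicit isomorphism. First I would, for each $g \in G$, choose an element $u_g \in R_g \cap N(R)^{\times}$, normalizing at the identity by taking $u_e = 1$ (legitimate since $1 \in R_e \cap N(R)^{\times}$ by Proposition~\ref{NFVOPropNonAssoc}\eqref{unit}), and set $T := R_e$, which is a non-associative unital subring of $R$. The computational engine for the whole argument is the elementary fact that \emph{if all but one of the factors of a product lie in $N(R)$, then the product is independent of how it is bracketed}; this follows by repeatedly applying the defining associator identities of $N_l(R)$, $N_m(R)$, $N_r(R)$. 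Using that $u_g \in N(R)$ and, by Proposition~\ref{NFVOPropNonAssoc}\eqref{inverse} together with Proposition~\ref{propNRunit}, that $u_g^{-1} \in R_{g^{-1}} \cap N(R)^{\times}$, I would show $R_g = T u_g = u_g T$ with unique coordinates: for $r \in R_g$ one has $r u_g^{-1} \in R_e = T$ and $(r u_g^{-1})u_g = r(u_g^{-1}u_g) = r$ because $u_g \in N_r(R)$, while uniqueness of the representation $r = t u_g$ follows the same way from $u_g \in N_m(R)$.

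Next I would define the crossed-system data by $\sigma_g(a) = u_g a u_g^{-1}$ and $\alpha(g,h) = u_g u_h u_{gh}^{-1}$ (both bracketings immaterial by the nucleus fact). A degree count gives $\sigma_g(a) \in R_g R_e R_{g^{-1}} \subseteq T$ and $\alpha(g,h) \in R_{gh}R_{(gh)^{-1}} \subseteq T$. Since $N(R)$ is an associative subring and $u_g, u_h, u_{gh}^{-1} \in N(R)^{\times}$, the element $\alpha(g,h)$ is a unit of the ring $N(R)$ with inverse $u_{gh}u_h^{-1}u_g^{-1}$; as both it and its inverse lie in $T \cap N(R) \subseteq N(T)$, we obtain $\alpha(g,h) \in N(T)^{\times}$, as required by Definition~\ref{defcrossedsystem}. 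For $\sigma_g$ I would check additivity (clear), $\sigma_g(1)=1$, and multiplicativity $\sigma_g(ab) = \sigma_g(a)\sigma_g(b)$ --- the latter a short computation that repeatedly cancels $u_g^{-1}u_g = 1$ using the nucleus fact --- and note that $a \mapsto u_g^{-1} a u_g$ is a two-sided inverse, so $\sigma_g \in \Aut(T)$.

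It then remains to verify the cocycle axioms (N1)--(N3). Condition (N3) is immediate from $u_e = 1$. The point that makes (N1) and (N2) painless is that \emph{all} the relevant factors $u_g, u_g^{-1}, \alpha(g,h), \dots$ lie in the associative ring $N(R)$, so identities among them may be manipulated associatively: from $\alpha(g,h)u_{gh} = u_g u_h$ one deduces (N1) by conjugating $a$ and, since only $a \notin N(R)$, the nucleus fact lets the nucleus factors combine as $(u_g u_h)\,a\,(u_h^{-1}u_g^{-1})$; condition (N2) reduces to the single associative identity $u_g(u_h u_s) = (u_g u_h)u_s$ after substituting the definitions of $\alpha$ and $\sigma_g$. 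Finally I would define the additive bijection $T \rtimes_{\sigma}^{\alpha} G \to R$ by $\sum_g t_g u_g \mapsto \sum_g t_g u_g$ (well defined and bijective by $R_g = T u_g$ with unique coordinates) and check multiplicativity by verifying $(a u_g)(b u_h) = a \sigma_g(b)\alpha(g,h) u_{gh}$ in $R$; expanding the right-hand side and cancelling nucleus factors reduces it to $a(u_g b u_h)$, which equals $(a u_g)(b u_h)$ by one application each of $u_g \in N_m(R)$ and $u_g \in N_l(R)$.

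The main obstacle throughout is simply keeping the non-associativity under control: none of the rearrangements above are legal for arbitrary elements, and each is justified only because every factor except at most one lies in the nucleus. I therefore expect the one genuinely load-bearing step to be the clean statement and proof of the ''product of nucleus elements with a single outsider is unambiguously bracketed'' lemma; once that is in hand, the verifications of $\sigma_g \in \Aut(T)$, of $\alpha(g,h) \in N(T)^{\times}$, and of (N1)--(N3) all become routine associative manipulations.
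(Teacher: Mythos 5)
Your proposal is correct and follows essentially the same route as the paper: choose $u_g \in R_g \cap N(R)^{\times}$ with $u_e = 1$, set $T = R_e$, prove $R_g = Tu_g = u_gT$, define $\sigma_g(a) = u_g a u_g^{-1}$ and $\alpha(g,h) = u_g u_h u_{gh}^{-1}$, and verify (N1)--(N3) together with compatibility of the multiplications. The only difference is one of presentation: you isolate explicitly the bracketing lemma for products with at most one non-nucleus factor and spell out the checks that $\sigma_g \in \Aut(T)$ and $\alpha(g,h) \in N(T)^{\times}$, which the paper performs implicitly by writing such products without parentheses.
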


\begin{proof}
For each $g \in G$, take $u_g \in R_g \cap N(R)^{\times}$.
Put $T=R_e$ and, supported by Proposition~\ref{NFVOPropNonAssoc}\eqref{unit}, choose $u_e=1$.

First we show that $R$, considered as a left (or right) $T$-module,
is free with $\{ u_g \}_{g \in G}$ as a basis.
From the gradation it follows that it is enough
to show that for each $g \in G$, $R_g$, 
considered as a left (or right) $T$-module
is free with $u_g$ as a basis.
Since $u_g \in R^{\times} \cap N(R)$
it follows that the left $T$-module $T u_g$
(or the right $T$-module $u_g T$) is free.
Take $g\in G$.
From Proposition~\ref{NFVOPropNonAssoc}\eqref{inverse} it follows that
$u_g^{-1} \in R_{g^{-1}}$.
Thus, $R_g = R_g u_g^{-1} u_g \subseteq T u_g \subseteq R_g$.
Hence, $R_g = T u_g$.
Analogously, 
$R_g = u_g u_g^{-1} R_g \subseteq u_g T \subseteq R_g$
which implies that $R_g = u_g T$.

Define $\sigma : G \rightarrow {\rm Aut}(T)$ by the relation
$\sigma_g(a) = u_g a u_g^{-1}$, for $g \in G$ and $a \in T$.
Define $\alpha : G \times G \rightarrow N(T)^{\times}$
by the relation $\alpha(g,h) = u_g u_h u_{gh}^{-1}$, for $g,h \in G$.
Now we check conditions (N1)--(N3) of Definition \ref{defcrossedsystem}.
To this end, take $g,h,s \in G$ and $a,b \in T$.

First we show (N1):
\begin{align*}
\sigma_g ( \sigma_h (a) ) =& u_g  u_h a u_h^{-1} u_g^{-1} =
u_g  u_h u_{gh}^{-1} u_{gh} a u_{gh}^{-1} u_{gh} u_h^{-1} u_g^{-1} \\
=& \alpha(g,h) \sigma_{gh}(a) \alpha(g,h)^{-1}
\end{align*}

Then we show (N2):
\begin{align*}
\alpha(g,h) \alpha(gh,s) =& u_g u_h u_{gh}^{-1} u_{gh} u_s u_{ghs}^{-1} =
u_g u_h u_s u_{ghs}^{-1} = 
u_g u_h u_s u_{hs}^{-1} u_{hs} u_{ghs}^{-1} \\
=& u_g \alpha(h,s) u_g^{-1} u_g  u_{hs} u_{ghs}^{-1} =
\sigma_g( \alpha(h,s) ) \alpha(g,hs)
\end{align*}

Finally, we show (N3): $\sigma_e = \identity_T$ is immediate.
Moreover, we get
\begin{displaymath}
	\alpha(g,e) = u_g u_e u_g^{-1} = 1 = u_e u_g u_g^{-1} = \alpha(e,g).
\end{displaymath}

We conclude our proof by showing that the multiplication in $R$ is compatible with
the crossed product multiplication rule:
\begin{displaymath}
	(a u_g) (b u_h) = a u_g b u_g^{-1} u_g u_h =
a \sigma_g(b) u_g u_h u_{gh}^{-1} u_{gh} = 
a \sigma_g(b) \alpha(g,h) u_{gh}.
\end{displaymath}
\end{proof}

\begin{rem}
Proposition \ref{prop:crossedprod} and Proposition \ref{prop:characterize}
are non-associative generalizations of
\cite[Proposition 1.4.1]{nastasescu2004} and \cite[Proposition 1.4.2]{nastasescu2004}, respectively.
\end{rem}

\begin{prop}\label{prop:RassocTassoc}
A non-associative crossed product $T \rtimes_{\sigma}^{\alpha} G$
is associative if and only if $T$ is associative.
\end{prop}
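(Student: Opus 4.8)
The plan is to prove the two implications separately, with the forward implication a short structural observation and the reverse implication a direct associator computation.

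For the ``only if'' direction, I would observe that $T$ is isomorphic, as a non-associative ring, to the degree-$e$ component $R_e = T u_e$ via $t \mapsto t u_e$. Indeed, by (N3) we have $(a u_e)(b u_e) = a \sigma_e(b) \alpha(e,e) u_e = ab\, u_e$, so this identification respects multiplication. Since $R_e R_e \subseteq R_e$, the component $R_e$ is a subring of $R$, and a subring of an associative ring is itself associative. Hence if $R$ is associative, then so is $T$.

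For the ``if'' direction, assume $T$ is associative. Then $N(T) = T$, so each $\alpha(g,h)$ lies in $T^{\times}$ and each $\sigma_g$ is a genuine ring automorphism of $T$. Because the associator is additive in each of its three arguments and every element of $R$ is a finite sum of homogeneous elements $t_g u_g$, it suffices to verify that $(a u_g , b u_h , c u_s) = 0$ for all $a,b,c \in T$ and all $g,h,s \in G$. Applying the multiplication rule twice in each grouping, I would compute
\begin{displaymath}
((a u_g)(b u_h))(c u_s) = a\, \sigma_g(b)\, \alpha(g,h)\, \sigma_{gh}(c)\, \alpha(gh,s)\, u_{ghs},
\end{displaymath}
and, using that $\sigma_g$ is multiplicative,
\begin{displaymath}
(a u_g)((b u_h)(c u_s)) = a\, \sigma_g(b)\, \sigma_g(\sigma_h(c))\, \sigma_g(\alpha(h,s))\, \alpha(g,hs)\, u_{ghs}.
\end{displaymath}

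To match the second expression with the first, I would apply (N1) to rewrite $\sigma_g(\sigma_h(c)) = \alpha(g,h)\, \sigma_{gh}(c)\, \alpha(g,h)^{-1}$, and then (N2) in the form $\sigma_g(\alpha(h,s))\, \alpha(g,hs) = \alpha(g,h)\, \alpha(gh,s)$; after this the factor $\alpha(g,h)^{-1}\alpha(g,h)$ cancels and the two expressions coincide, so the associator vanishes. Throughout, associativity of $T$ is exactly what lets me drop the internal parentheses in these three- and four-fold products of $T$, apply $\sigma_g$ term by term to a product, and move $\alpha(g,h)^{-1}$ freely past neighbouring factors. I do not anticipate a genuine obstacle here: once $T$ is assumed associative the verification is the standard crossed-product cocycle bookkeeping, and the only point requiring care is keeping the parenthesization of the group elements (using $g(hs) = (gh)s$) in step with the reassociation of the corresponding coefficients in $T$.
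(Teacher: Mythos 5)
Your proof is correct, and while the easy direction matches the paper (the paper also just notes that $T$, identified with $Tu_e=R_e$, is a subring of $R$), your argument for the ``if'' direction follows a genuinely different route. You verify associativity of $R$ head-on: tri-additivity of the associator reduces the problem to homogeneous triples $(au_g, bu_h, cu_s)$, and then the classical crossed-product cocycle computation, using (N1) once and (N2) once, shows the two parenthesizations agree. The paper instead reduces the problem to showing $T \subseteq N(R)$: it uses the facts that $N(R)$ is a subring of $R$ (a consequence of the associator identity) and that every $u_g$ lies in $N(R)^{\times}$ (established in the proof of Proposition~\ref{prop:crossedprod}\eqref{propintersectionunit}), so that $T \subseteq N(R)$ forces $R = N(R)$, i.e.\ $R$ associative; its new computations then only involve associators with one argument $t \in T$ and two homogeneous arguments $au_g, bu_h$, with the (N2) bookkeeping already absorbed into the earlier proposition. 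The trade-off: your argument is self-contained and needs nothing beyond the definitions and additivity of the associator, which is appealing; the paper's argument reuses machinery it has already built (the nucleus as an associative subring, the $u_g$'s as nuclear units), so its remaining computations are shorter and the proof sits more naturally within the paper's systematic use of $N(R)$. One small point in your write-up deserves a remark: the fact that each $\sigma_g$ is multiplicative is not a consequence of assuming $T$ associative — it holds by definition, since $\sigma$ maps into ${\rm Aut}(T)$ — but this does not affect the validity of your computation.
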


\begin{proof}
Put $R = T \rtimes_{\sigma}^{\alpha} G$.
Since $T$ is a subring of $R$, the ''only if''
statement is clear.
Now we show the ''if'' statement.
Suppose that $T$ is associative.
From Section \ref{sectionnonassociativerings}
we know that $N(R)$ is a subring of $R$.
Thus, from the proof of Proposition~\ref{prop:crossedprod}\eqref{propintersectionunit},
it follows that it is enough to show that $T \subseteq N(R)$.
To this end, take $a,b,t \in T$ and $g,h \in G$.

First we show that $t \in N_l(R)$. Using that $T$ is associative we get that
\begin{align*}
	t \cdot ( a u_g \cdot b u_h )
	&= t ( a \sigma_g(b) \alpha(g,h) u_{gh} )
	=  t ( a \sigma_g(b) \alpha(g,h) ) u_{gh} \\
	&=	(t a ) ( \sigma_g(b)  \alpha(g,h) ) u_{gh} =
(t a u_g) \cdot (b u_h) = 
(t \cdot a u_g)\cdot ( b u_h ).
\end{align*}
Next we show that $t \in N_m(R)$. Using that $T$ is associative we get that
\begin{align*}
	a u_g \cdot ( t \cdot b u_h )
	&= a u_g \cdot ( tb u_h )
	= a \sigma_g(tb) \alpha(g,h) u_{gh}
	= a ( \sigma_g(t) \sigma_g(b) ) \alpha(g,h) u_{gh} \\
	&=
	( a \sigma_g(t) ) \sigma_g(b) \alpha(g,h) u_{gh} =
( a \sigma_g(t) u_g ) \cdot (b u_h) = 
(a u_g \cdot t ) \cdot (b u_h).
\end{align*}
Finally, we show that $t \in N_r(R)$.
Using (N1) and the fact that $T$ is associative, we get
\begin{align*}
	(a u_g \cdot b u_h) \cdot t
	&= ( a \sigma_g(b) \alpha(g,h) u_{gh} ) \cdot t
= (a \sigma_g(b) \alpha(g,h) ) \cdot \sigma_{gh}(t) u_{gh} \\
&= a ( \sigma_g(b) \alpha(g,h) \sigma_{gh}(t) ) u_{gh}
= a ( \sigma_g(b) \alpha(g,h) (\alpha(g,h)^{-1} \sigma_g(\sigma_{h}(t)) \alpha(g,h))) u_{gh} \\
&= a ( \sigma_g(b) \sigma_g ( \sigma_h (t) ) ) \alpha(g,h) u_{gh}
= a \sigma_g( b \sigma_h(t) ) \alpha(g,h) u_{gh} \\
&= (a u_g ) \cdot (b \sigma_h(t) u_h) =
( a u_g ) \cdot ( b u_h \cdot t ).
\end{align*}
\end{proof}

\begin{defn}
Let $(T , G , \sigma , \alpha)$ be a non-associative crossed system.
An ideal $I$ of $T$ is said to be {\it $G$-invariant}
if for every $g \in G$, the inclusion $\sigma_g(I) \subseteq I$ holds.
The ring $T$ is said to be {\it $G$-simple} if 
the only $G$-invariant ideals of $T$ are $\{ 0 \}$ and $T$ itself.
\end{defn}

\begin{prop}\label{propGsimple}
A non-associative crossed product $T \rtimes_{\sigma}^{\alpha} G$
is graded simple, with respect to its canonical $G$-gradation, if and only if $T$ is $G$-simple.
\end{prop}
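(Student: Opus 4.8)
The plan is to establish the biconditional by directly translating $G$-simplicity of $T$ into graded simplicity of $R = T \rtimes_\sigma^\alpha G$ via the canonical bijection between $G$-invariant ideals of $T$ and graded ideals of $R$. The key structural fact I would exploit is that, by Proposition~\ref{prop:crossedprod}\eqref{crossedfaithful}, the crossed product is strongly graded, so that $R_g = T u_g$ with each $u_g \in R_g \cap N(R)^\times$; this gives me invertible homogeneous elements in every degree, which is exactly what is needed to move ideals of $T$ up and down the grading.

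First I would set up the two maps. Given a $G$-invariant ideal $I$ of $T = R_e$, I would form $\langle I \rangle$, or more concretely the graded additive subgroup $\oplus_{g \in G} I u_g$, and check that this is a graded ideal of $R$; the conjugation relations $\sigma_g(a) = u_g a u_g^{-1}$ together with $G$-invariance ($\sigma_g(I) \subseteq I$) are what guarantee closure under multiplication by homogeneous elements from both sides. Conversely, given a graded ideal $J$ of $R$, I would take its degree-$e$ component $J \cap R_e = J_e$, an ideal of $T$, and verify it is $G$-invariant: for $a \in J_e$ and $g \in G$ one has $\sigma_g(a) = u_g a u_g^{-1} \in R_e$, and since $u_g \in N(R)$ the product lands in $J_e$ because $J$ is an ideal.

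The crux is showing these two assignments are mutually inverse, so that proper $G$-invariant ideals correspond exactly to proper graded ideals, whence $T$ is $G$-simple if and only if the only graded ideals are $\{0\}$ and $R$. For a graded ideal $J$, I must show $J = \oplus_g (J_e) u_g$, i.e. that $J_g = J_e u_g$ for every $g$. The inclusion $J_e u_g \subseteq J_g$ is immediate; for the reverse I would use invertibility, writing any $x \in J_g$ as $x = (x u_g^{-1}) u_g$ and checking $x u_g^{-1} \in J_e$, which holds because $u_g^{-1} \in R_{g^{-1}}$ by Proposition~\ref{NFVOPropNonAssoc}\eqref{inverse} and $J$ is an ideal. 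Here I expect the main obstacle: in the non-associative setting one cannot freely reassociate, so I must be careful that the manipulations $x u_g^{-1} u_g = x$ and the containment arguments only ever reassociate across the nuclear elements $u_g$. Precisely because $u_g, u_g^{-1} \in N(R)$, every such reassociation is licensed by the defining property of the nucleus, and this is the point where the hypothesis $R_g \cap N(R)^\times \neq \emptyset$ does the essential work. With that care taken, the bijection closes and the equivalence follows.
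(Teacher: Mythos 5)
Your proof is correct and takes essentially the same route as the paper's: the paper uses exactly your two maps, $J \mapsto JR = \oplus_{g \in G} J u_g$ for the ``only if'' direction and $I \mapsto I \cap T$ for the ``if'' direction, with the same key ingredients ($u_g \in N(R)^{\times}$, $R_g = T u_g$, and conjugation by the nuclear units realizing $\sigma_g$ up to factors of $\alpha$). The only difference is cosmetic: you establish the full mutually-inverse ideal correspondence $J_g = J_e u_g$, while the paper runs each direction just far enough to transfer simplicity (e.g.\ from $I \cap T = T$ it concludes $1 \in I$, hence $I = R$) without exhibiting the bijection.
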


\begin{proof}
Put $R = T \rtimes_{\sigma}^{\alpha} G$.
First we show the ''only if'' statement.
Suppose that $R$ is graded simple.
Let $J$ be a non-zero $G$-invariant ideal of $T$.
Put $I = JR$. Then $I$ is a non-zero 
graded ideal of $R$.
From graded simplicity of $R$ it follows that $I = R$.
Therefore, $J = T$.
Now we show the ''if'' statement.
Suppose that $T$ is $G$-simple.
Let $I$ be a non-zero graded ideal of $R$.
Consider the ideal $J = I \cap T$ of $T$.
From the proof of Proposition~\ref{prop:crossedprod}\eqref{propintersectionunit}
we notice that $u_g \in N(R)^\times$, for each $g\in G$, and hence $J$ is non-zero.
Take a non-zero $t \in J$.
From the equalities 
$\sigma_g(t) = u_g t u_{g^{-1}} \alpha(g,g^{-1})^{-1}$,
for $g \in G$, it follows that $J$ is $G$-invariant.
By $G$-simplicity of $T$ we get that $J=I \cap T = T$.
In particular, $1\in I$ and hence $I=R$.
\end{proof}

\begin{prop}\label{centercrossed}
The center of a non-associative crossed product 
$T \rtimes_{\sigma}^{\alpha} G$ equals
the set of elements of the form $\sum_{g \in G} t_g u_g$,
with $t_g \in T$, such that
for all $g,h \in G$ and all $t \in T$, 
the following four properties hold:
\begin{itemize}

\item[(i)] $t t_g = t_g \sigma_g(t)$;

\item[(ii)] $t_{h g h^{-1}} = \sigma_h(t_g) \alpha(h,g) \alpha(h g h^{-1} , h)^{-1}$;

\item[(iii)] $t_g \in N(T)$;

\item[(iv)] if $g \notin \Delta(G)$, then $t_g = 0$.

\end{itemize}
\end{prop}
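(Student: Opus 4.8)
The plan is to characterize the center $Z(R)$ of $R = T \rtimes_{\sigma}^{\alpha} G$ by writing a general element as $x = \sum_{g \in G} t_g u_g$ and translating the two defining conditions of the center---commutativity and nucleus membership---into conditions on the coefficients $t_g$. Since $Z(R) = C(R) \cap N(R)$, I would treat these two memberships separately and then intersect the resulting constraints. The homogeneous components $R_g = T u_g$ form a direct sum, so any equation between elements of $R$ can be checked degree-by-degree, which is what turns the abstract conditions into the explicit coordinate relations (i)--(iv).

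First I would extract the commuter conditions (i) and (ii) from $x \in C(R)$, i.e. from $x y = y x$ for all $y \in R$. By bi-additivity it suffices to test against homogeneous elements $y = t u_h$ with $t \in T$ and $h \in G$; and by further decomposing I would test separately against $y = t u_e$ (elements of $T$) and against $y = u_h$ (the chosen unit elements). Testing against $t \in T$ gives, for each $g$, the relation $t \cdot t_g u_g = t_g u_g \cdot t = t_g \sigma_g(t) u_g$, whence $t t_g = t_g \sigma_g(t)$, which is (i). Testing against $u_h$ and comparing the homogeneous component of degree $h g h^{-1} \cdot h = h g$ on both sides (using that $u_h \cdot t_g u_g$ lands in degree $hg$ and $t_g u_g \cdot u_h$ in degree $gh$, reindexed appropriately) yields, after clearing the cocycle factors $\alpha(h,g)$ and $\alpha(hgh^{-1},h)$ via the multiplication rule and (N1)--(N2), the conjugation-twist relation $t_{hgh^{-1}} = \sigma_h(t_g) \alpha(h,g) \alpha(hgh^{-1},h)^{-1}$, which is (ii).

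Next I would derive (iii) from $x \in N(R)$. Because $u_h \in N(R)^{\times}$ for every $h$ (established in the proof of Proposition~\ref{prop:crossedprod}\eqref{propintersectionunit}) and $N(R)$ is an associative subring, membership of $x$ in the nucleus should reduce, after multiplying by suitable $u_h^{\pm 1}$, to membership of each coefficient $t_g$ in $N(T) = N(R) \cap T$. Concretely I would show $t_g u_g \in N(R)$ forces $t_g \in N_l(T) \cap N_m(T) \cap N_r(T)$ by evaluating associators $(t_g u_g, a u_s, b u_t)$ and its middle and right variants, expanding with the crossed product rule, and isolating the $T$-component; since $u_g$ is already nuclear, the obstruction to vanishing is exactly the associator of $t_g$ with elements of $T$. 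This gives (iii). Finally, condition (iv) is the statement that the support of a central element lies in $\Delta(G)$: this follows from Proposition~\ref{faithfuldeltaG} together with Proposition~\ref{prop:crossedprod}\eqref{crossedfaithful}, since $R$ is faithfully graded with full support, so $Z(R) \subseteq C(R) \subseteq R_{\Delta(G)}$, forcing $t_g = 0$ whenever $g \notin \Delta(G)$.

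I expect the main obstacle to be the bookkeeping in deriving (ii): one must carefully track which homogeneous component to compare and correctly invoke the twisted cocycle identities (N1) and (N2) so that the $\alpha$-factors assemble into precisely $\alpha(h,g)\alpha(hgh^{-1},h)^{-1}$ rather than some equivalent but differently-grouped expression, and the non-associativity means I cannot freely reassociate products of three or more units. The converse direction---verifying that any $x$ whose coefficients satisfy (i)--(iv) genuinely lies in $Z(R)$---should be routine but must be done, re-running the same computations in reverse and using (iii) to legitimately reassociate at each step where a coefficient $t_g$ meets a product of two other factors.
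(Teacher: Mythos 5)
Your proposal is correct and takes essentially the same route as the paper: the paper likewise treats $Z(R)=C(R)\cap N(R)$, reads off (i) from $tr=rt$, (ii) from $ru_h=u_hr$, (iii) from associator identities involving elements of $T$, and proves the converse by recombining these via the nuclearity of the $u_h$'s, exactly as you outline. The only (harmless) deviation is condition (iv): the paper deduces it directly from (ii) together with finiteness of support, whereas you invoke Proposition~\ref{faithfuldeltaG} and Proposition~\ref{prop:crossedprod}\eqref{crossedfaithful}; both arguments are valid, since those results precede this proposition.
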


\begin{proof}
Put $R = T \rtimes_{\sigma}^{\alpha} G$ and
let $r = \sum_{g \in G} t_g u_g \in R$. 
Take $s,t \in T$ and $h \in G$.

Suppose first that $r \in Z(R)$.
Property (i) follows from the 
equality $tr = rt$.
Property (ii) follows from the 
equality $r u_h = u_h r$.
Property (iii) follows from the equalities 
$(st)r = s(tr)$, $(sr)t = s(rt)$ and $(rs)t = r(st)$.
Property (iv) follows from
property (ii) above.

Now suppose that $r$ satisfies
properties (i), (ii), (iii) and (iv).
We wish to show that $r \in Z(R)$.
From (i) and (ii), and the fact that $u_h \in N(R)$ (see the proof of Proposition~\ref{prop:crossedprod}\eqref{propintersectionunit}), 
we get that $r \in C(R)$.
From (iii), and the fact that $u_g \in N(R)$,
for all $g \in G$, we get that $r \in N(R)$.
\end{proof}

\begin{prop}\label{propZTG}
If $T \rtimes_{\sigma}^{\alpha} G$ is a non-associative crossed product,
where $T$ is $G$-simple, then $Z(T)^G := \{t\in Z(T) \mid \sigma_g(t)=t, \ \forall g\in G\}$ is a field.
\end{prop}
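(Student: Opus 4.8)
The plan is to first observe that $Z(T)^G$ is automatically a commutative, associative, unital ring: it is the fixed subset of the automorphisms $\sigma_g$ acting on $Z(T)$, and $Z(T)$ is itself commutative, associative and unital by the general facts recorded in Section~\ref{sectionnonassociativerings}. Since each $\sigma_g$ is a unital ring automorphism, $Z(T)^G$ contains $1_T$ and is closed under addition and multiplication. Thus the only thing to verify is that every non-zero element of $Z(T)^G$ is invertible within $Z(T)^G$, and for this I would exploit the $G$-simplicity of $T$.

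So I would fix a non-zero $a \in Z(T)^G$ and consider the set $aT = \{ at \mid t \in T \}$. The first key step is to check that $aT$ is a two-sided ideal of the non-associative ring $T$. Because $a \in Z(T) = N(T) \cap C(T)$, the associator and commuter conditions collapse the relevant products: for $(at)s$ one uses $a \in N_l(T)$ to get $(at)s = a(ts) \in aT$, and for $r(at)$ one uses successively that $a$ lies in $N_m(T)$, in $C(T)$, and in $N_l(T)$ to rewrite $r(at) = (ra)t = (ar)t = a(rt) \in aT$. The second key step is that $aT$ is $G$-invariant: since $\sigma_g(a) = a$ and $\sigma_g$ is an automorphism, $\sigma_g(aT) = a\,\sigma_g(T) = aT$. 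As $a = a \cdot 1 \neq 0$, the ideal $aT$ is non-zero, so $G$-simplicity of $T$ forces $aT = T$; in particular there is $b \in T$ with $ab = 1$.

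It then remains to show $b \in Z(T)^G$. Proposition~\ref{centerInvClosed}, applied to the non-associative unital ring $T$, gives immediately that $b \in Z(T)$. To see that $b$ is fixed by every $\sigma_g$, I would apply $\sigma_g$ to $ab = 1$ and use $\sigma_g(a) = a$ to obtain $a\,\sigma_g(b) = 1 = ab$; since $b$ and $\sigma_g(b)$ both lie in the commutative associative ring $Z(T)$, uniqueness of inverses there yields $\sigma_g(b) = b\,(a\,\sigma_g(b)) = b$. Hence $b \in Z(T)^G$ is a two-sided inverse of $a$, and $Z(T)^G$ is a field.

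The main obstacle is the first step: confirming that $aT$ really is a genuine two-sided ideal in the non-associative setting, where the naive manipulations available in the associative case are unavailable. This is handled purely by bookkeeping with the three nucleus memberships and the commuter membership packaged in $a \in Z(T)$, and once that is in place the rest of the argument — the descent to $1 \in aT$ via $G$-simplicity and the verification that the inverse returns to $Z(T)^G$ — is routine.
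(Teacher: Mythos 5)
Your proposal is correct and follows essentially the same route as the paper's own proof: take a non-zero $a \in Z(T)^G$, observe that the principal ideal $aT = Ta$ is a non-zero $G$-invariant ideal of $T$, invoke $G$-simplicity to obtain $b$ with $ab=1$, then use Proposition~\ref{centerInvClosed} to get $b \in Z(T)$ and an $\sigma_g$-equivariance computation to get $b \in Z(T)^G$. The only difference is cosmetic: you spell out the nucleus/commuter bookkeeping showing $aT$ is a two-sided ideal, which the paper leaves implicit, and your uniqueness-of-inverses argument for $\sigma_g(b)=b$ replaces the paper's equivalent direct computation.
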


\begin{proof}
Clearly, $Z(T)^G$ is a commutative unital ring.
Now we show that every non-zero element in $Z(T)^G$
has a multiplicative inverse.
To this end, put $R = T \rtimes_{\sigma}^{\alpha} G$.
Take a non-zero $t \in Z(T)^G$.
Put $I = tT = Tt$.
Then $I$ is a non-zero $G$-invariant ideal of $T$.
By $G$-simplicity of $T$, we get that $I = Z(T)^G$.
In particular, there is $s \in T$ such that $st = ts = 1$.
By Proposition \ref{centerInvClosed}, we get that $s \in Z(T)$.
Take $g \in G$. Then
$\sigma_g(s) = 
\sigma_g(s) 1 = 
\sigma_g(s) ts = 
\sigma_g(s) \sigma_g(t) s =
\sigma_g(st) s =
\sigma_g(1) s = 1s = s$.
Hence, $s \in Z(T)^G$.
\end{proof}

\begin{lem}\label{lemmamclain}
If $G$ is a torsion-free hypercentral group,
then $\Delta(G) = Z(G)$.
\end{lem}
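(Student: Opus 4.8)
The inclusion $Z(G) \subseteq \Delta(G)$ is immediate, since a central element is its own unique conjugate. The whole content is the reverse inclusion $\Delta(G) \subseteq Z(G)$, and the plan is to exploit two structural features of torsion-free hypercentral groups. First, every hypercentral group is locally nilpotent (see \cite{robinson1972}), so every finitely generated subgroup of $G$ is a \emph{torsion-free nilpotent} group. Second, since $G$ is torsion-free, the finite conjugate subgroup $\Delta(G)$ is torsion-free abelian: in general the torsion elements of $\Delta(G)$ form a subgroup $\Delta^+(G)$ with $\Delta(G)/\Delta^+(G)$ torsion-free abelian (see \cite{passman1977}), and here $\Delta^+(G) = \{ e \}$, forcing $\Delta(G) = \Delta(G)/\Delta^+(G)$.

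Fix $x \in \Delta(G)$ and an arbitrary $g \in G$; I want to show that $[x,g]=e$. I would set $H = \langle x, g \rangle$, a torsion-free nilpotent group, and let $B$ be the normal closure of $x$ in $H$. All conjugates of $x$ in $H$ lie in the conjugacy class of $x$ in $G$, which is finite and contained in the abelian group $\Delta(G)$; hence $B$ is a finitely generated abelian group, and being a subgroup of the torsion-free group $G$ it is free abelian, $B \cong \Z^d$. Conjugation by $g$ induces an automorphism $\psi$ of $B$, i.e. an element $\psi \in \mathrm{GL}_d(\Z)$. Because $x$ has only finitely many conjugates, some positive power $g^r$ lies in the centralizer $C_G(x)$; a short check shows that $g^r$ then centralizes every conjugate $g^{-i} x g^i$ of $x$, so $g^r$ centralizes $B$ and $\psi^r = 1$. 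Thus $\psi$ has finite order.

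It remains to upgrade this to $\psi = 1$. Since $H$ is nilpotent of some class $c$ and $B$ is normal in $H$, the iterated commutators $[B, \underbrace{g, \ldots, g}_{c}]$ vanish; translated into the action on $B \cong \Z^d$ (written additively), this says $(\psi - 1)^c = 0$, i.e. $\psi$ is unipotent. A unipotent element of $\mathrm{GL}_d(\Z)$ of finite order must be the identity (over $\Q$, writing $\psi = 1 + \nu$ with $\nu$ nilpotent, the relation $\psi^r = 1$ forces $\nu = 0$). Hence $\psi = 1$, so $g$ centralizes $x$; as $g \in G$ was arbitrary, $x \in Z(G)$. I expect the step combining unipotence with finite order to be the crux: the finiteness of the conjugacy class only yields that a \emph{power} of $g$ centralizes $x$, and it is precisely torsion-freeness together with local nilpotence that force $g$ itself to do so.
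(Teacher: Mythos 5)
Your proof is correct, but it takes a genuinely different route from the paper: the paper does not prove this lemma at all, it simply cites \cite[Theorem 1]{mclain1956}, whereas you give a self-contained argument. Your proof rests on three standard ingredients: hypercentral groups are locally nilpotent (so $H=\langle x,g\rangle$ is finitely generated torsion-free nilpotent); B.~H.~Neumann's theorem, quoted from \cite{passman1977}, that $\Delta(G)/\Delta^+(G)$ is torsion-free abelian, which for torsion-free $G$ makes $\Delta(G)$ itself torsion-free abelian; and the linear-algebra fact that a finite-order unipotent element of $\mathrm{GL}_d(\Q)$ is the identity. The interplay is exactly right: finiteness of the conjugacy class yields $\psi^r=1$, nilpotency of $H$ yields $(\psi-1)^c=0$, and torsion-freeness (through $B\cong\Z^d$) lets you conclude $\psi=1$ and hence $[x,g]=e$. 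One step deserves an explicit extra line: you pass from ``$g^r$ centralizes every $g^{-i}xg^i$'' to ``$g^r$ centralizes $B$'', which requires $B$ to be generated by the $g$-power conjugates of $x$ alone, not by all $H$-conjugates. This is true but not automatic: the subgroup $K=\langle g^{-i}xg^i : i\in\Z\rangle$ is $g$-invariant by construction and is abelian (its generators lie in $\Delta(G)$) and contains $x$, so conjugation by $x$ fixes $K$ elementwise; hence $K$ is normal in $H=\langle x,g\rangle$ and therefore $K=B$. With that observation inserted, the argument is complete. What your approach buys is independence from McLain's paper, at the cost of invoking local nilpotence of hypercentral groups and Neumann's theorem; what the paper's citation buys is brevity, since McLain's result is stated in exactly the needed generality.
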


\begin{proof}
See \cite[Theorem 1]{mclain1956}.
\end{proof}

The following result generalizes \cite[Theorem 5.6]{bell1987}.

\begin{thm}\label{nonassociativebell}
If $T \rtimes_{\sigma}^{\alpha} G$ is a non-associative crossed product,
where $G$ is a torsion-free hypercentral group, 
then the following three conditions are equivalent:
\begin{itemize}

\item[(i)] $T \rtimes_{\sigma}^{\alpha} G$ is simple;

\item[(ii)] $T$ is $G$-simple and $Z( T \rtimes_{\sigma}^{\alpha} G ) = Z(T)^G$;

\item[(iii)] $T$ is $G$-simple and there do not exist $u \in T^{\times}$
and $g \in Z(G) \setminus \{e\}$ such that for every
$h \in G$ and every $t \in T$, the relations
$\sigma_h(u) = u \alpha(g,h) \alpha(h,g)^{-1}$ and
$\sigma_g(t) = u^{-1} t u$ hold.
\end{itemize} 
\end{thm}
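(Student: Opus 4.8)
The plan is to prove this by establishing the equivalences $(i)\Leftrightarrow(ii)$ and $(ii)\Leftrightarrow(iii)$, relying on the main simplicity theorem to handle the first and on the explicit description of the center from Proposition~\ref{centercrossed} to handle the second. Throughout I write $R = T \rtimes_{\sigma}^{\alpha} G$ and use that, by Proposition~\ref{prop:crossedprod}, $R$ is strongly (hence faithfully) $G$-graded with $\supp(R) = G$ and $R_e = T$.

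For $(i)\Leftrightarrow(ii)$, I would invoke Corollary~\ref{centerRE}, which says that since $G$ is torsion-free hypercentral, $R$ is simple if and only if $R$ is graded simple and $Z(R) \subseteq R_e = T$. By Proposition~\ref{propGsimple}, graded simplicity of $R$ is equivalent to $G$-simplicity of $T$. So the content is to show, assuming $T$ is $G$-simple, that the condition $Z(R) \subseteq T$ is equivalent to $Z(R) = Z(T)^G$. First I would check the inclusion $Z(T)^G u_e \subseteq Z(R)$: an element $t \in Z(T)^G$ satisfies properties (i)--(iv) of Proposition~\ref{centercrossed} trivially (condition (i) becomes $t' t = t \sigma_e(t') = t t'$ using centrality in $T$ and $\sigma_e = \id$; (ii) reduces to $\sigma_h(t) = t$ using (N3); (iii) holds since $t \in Z(T) \subseteq N(T)$; (iv) is vacuous as the only nonzero term sits at $g = e \in \Delta(G)$). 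Conversely, if $r = \sum_g t_g u_g \in Z(R)$ lies in $R_e = T$, then $r = t_e u_e$ with $t_e \in T$, and properties (i) and (ii) of Proposition~\ref{centercrossed} with $g = e$ force $t_e \in Z(T)$ and $\sigma_h(t_e) = t_e$ for all $h$, i.e. $t_e \in Z(T)^G$. Thus $Z(R) \subseteq T$ exactly gives $Z(R) = Z(T)^G$, and $Z(T)^G$ is a field by Proposition~\ref{propZTG}, matching the field hypothesis hidden in Corollary~\ref{centerRE}.

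For $(ii)\Leftrightarrow(iii)$, assuming $G$-simplicity of $T$ throughout, I would reformulate the equality $Z(R) = Z(T)^G$ as the \emph{non-existence} of a central element with nontrivial support, and then show this non-existence matches condition (iii). By Lemma~\ref{lemmamclain}, $\Delta(G) = Z(G)$, so property (iv) of Proposition~\ref{centercrossed} forces any central $r = \sum_g t_g u_g$ to be supported on $Z(G)$. The failure $Z(R) \supsetneq Z(T)^G$ means precisely that there is a central $r$ with a nonzero term $t_g u_g$ at some $g \in Z(G) \setminus \{e\}$; since $Z(G) \subseteq C(G)$, property (ii) simplifies (as $hgh^{-1}=g$) to a transformation rule on the single coefficient $t_g$, and I would argue using $G$-simplicity that $t_g \in T^\times$. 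Setting $u = t_g$, property (i) gives $\sigma_g(t) = u^{-1} t u$ for all $t \in T$, and property (ii) (with $g \in Z(G)$) gives the twisting relation relating $\sigma_h(u)$ to $\alpha(g,h)$ and $\alpha(h,g)$; I would verify that this rearranges exactly to $\sigma_h(u) = u\,\alpha(g,h)\alpha(h,g)^{-1}$. Conversely, given such $u$ and $g$, the element $u\,u_g$ is central, witnessing $Z(R) \neq Z(T)^G$.

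The main obstacle I expect is the passage from ``$t_g \neq 0$'' to ``$t_g \in T^\times$'' in the $(ii)\Rightarrow(iii)$ direction, and dually producing a genuinely central element from the data in (iii). The invertibility should follow because condition (i), $t\, t_g = t_g \sigma_g(t)$, forces the two-sided ideal structure around $t_g$ to be $G$-invariant; combined with $t_g \in N(T)$ (property (iii)) and $G$-simplicity of $T$, one deduces $T t_g = t_g T = T$, whence $t_g$ is a unit (invoking Proposition~\ref{centerInvClosed} or Proposition~\ref{propNRunit} to keep the inverse in $N(T)$). The cocycle bookkeeping in translating property (ii) into the precise twisting relation of (iii)---carefully tracking $\alpha(h,g)$ versus $\alpha(hgh^{-1},h) = \alpha(g,h)$ when $g$ is central---is the most error-prone computation, but it is routine once the invertibility and the support restriction to $Z(G)$ are in place.
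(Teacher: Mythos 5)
Your proposal is correct and follows essentially the same route as the paper's own proof: Corollary~\ref{centerRE} together with Proposition~\ref{propGsimple} and the computation $Z(R)\cap R_e = Z(T)^G$ from Proposition~\ref{centercrossed} handles (i)$\Leftrightarrow$(ii), while Proposition~\ref{centercrossed}, Lemma~\ref{lemmamclain} and the argument that $G$-simplicity forces the offending coefficient $t_g$ to be invertible handle the equivalence with (iii). The only difference is organizational --- you connect (iii) directly to (ii) via the center description (with central witness $u\,u_g$), whereas the paper connects (iii) to (i) through Corollary~\ref{centerRE} (with witness $1 + u\,u_g$) --- but the key lemmas and computations are identical, including the step (shared, and left implicit, by both arguments) that the witness element satisfies condition (iii) of Proposition~\ref{centercrossed}.
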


\begin{proof}
Put $R = T \rtimes_{\sigma}^{\alpha} G$ with its canonical $G$-gradation, which, by Proposition~\ref{prop:crossedprod}, is faithful.

(i)$\Rightarrow$(ii):
It follows from Proposition~\ref{propGsimple}
that $T$ is $G$-simple.
From Corollary~\ref{centerRE}, we get that 
$Z( R ) \subseteq T$.
From Proposition \ref{centercrossed}, we 
thus get that $Z(R) = Z(T)^G$.

(ii)$\Rightarrow$(i):
We notice that $Z(R)\subseteq R_e=T$.
Hence, the desired conclusion follows from 
Proposition~\ref{propGsimple} and Corollary \ref{centerRE}.

(iii)$\Rightarrow$(i):
We claim that $Z(R) \subseteq R_e$.
If we assume that the claim holds, then (i) follows from
Proposition~\ref{propGsimple} and Corollary \ref{centerRE}.
Now we show the claim.
Take $r = \sum_{h \in G} t_h u_h \in Z(R)$.
Seeking a contradiction, suppose that there 
is some $g \in \supp(r) \setminus \{e\}$.
By Proposition \ref{centercrossed} and Lemma \ref{lemmamclain},
it follows that $g \in Z(G)$.
Put $u = t_g$.
From Proposition \ref{centercrossed}(i),
we get that $Tu = uT$ is a non-zero 
$G$-invariant ideal of $T$.
Thus, from $G$-simplicity of $T$,
we get that $u \in T^{\times}$.
This contradicts Proposition \ref{centercrossed}(i)--(ii).

(i)$\Rightarrow$(iii):
Seeking a contradiction, suppose that (iii) fails to hold.
Then there is some $u \in T^{\times}$ and $g \in Z(G)\setminus\{e\}$
such that for every $h \in G$ and every $t \in T$, the relations
$\sigma_h(u) = u \alpha(g,h) \alpha(h,g)^{-1}$ and
$\sigma_g(t) = u^{-1} t u$ hold.
By Proposition \ref{centercrossed}, the element
$1 + u u_g$ belongs to $Z(R) \setminus R_e$.
This contradicts Corollary \ref{centerRE}.
\end{proof}

\begin{lem}\label{lemunit}
Suppose that $T \rtimes_{\sigma}^{\alpha} G$ is a non-associative
crossed product and that $H$ is a subgroup of $G$.
Let $\sigma'$ and $\alpha'$ be the restriction of $\sigma$ and $\alpha$
to $H$ and $H\times H$, respectively.
Then $T \rtimes_{\sigma'}^{\alpha'} H$ is a non-associative
crossed product and the map
$\pi : T \rtimes_{\sigma}^{\alpha} G \rightarrow
T \rtimes_{\sigma'}^{\alpha'} H$,
defined by $\pi( \sum_{g \in G} t_g u_g ) = \sum_{h \in H} t_h u_h$, 
is a $T \rtimes_{\sigma'}^{\alpha'} H$-bimodule homomorphism.
Hence, if $r \in (T \rtimes_{\sigma'}^{\alpha'} H) \cap (T \rtimes_{\sigma}^{\alpha} G)^\times$,
then $r \in (T \rtimes_{\sigma'}^{\alpha'} H)^\times$. 
\end{lem}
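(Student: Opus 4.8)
The plan is to prove \textbf{Lemma~\ref{lemunit}} in three stages: first establishing that the restricted data $(T, H, \sigma', \alpha')$ forms a non-associative crossed system, then verifying the bimodule homomorphism property of $\pi$, and finally deducing the invertibility transfer statement. For the first stage, I would observe that conditions (N1)--(N3) of Definition~\ref{defcrossedsystem} are universally quantified over elements of the ambient group; since $H$ is a subgroup, restricting $\sigma$ and $\alpha$ to $H$ and $H \times H$ simply means checking those same identities on the subset of group elements drawn from $H$, which holds automatically. The only subtlety is confirming that $\sigma'_h \in \Aut(T)$ for $h \in H$ (inherited from $\sigma$) and that $\alpha'$ lands in $N(T)^\times$ (inherited likewise), so that $T \rtimes_{\sigma'}^{\alpha'} H$ is a legitimate non-associative crossed product.

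For the bimodule homomorphism claim, I would regard $T \rtimes_{\sigma'}^{\alpha'} H$ as sitting inside $T \rtimes_{\sigma}^{\alpha} G$ as the $H$-graded part, so that $\pi$ is the projection onto $\oplus_{h \in H} T u_h$. The key computation is to verify $\pi\bigl( (t_a u_a)\, r \bigr) = (t_a u_a)\, \pi(r)$ for $a \in H$ and $r = \sum_{g \in G} t_g u_g$, and symmetrically on the right. The point is that multiplying by $t_a u_a$ with $a \in H$ shifts the grading degree from $g$ to $ag$, and since $H$ is a subgroup we have $ag \in H$ if and only if $g \in H$; thus the terms of $r$ indexed by $H$ map precisely to the terms of the product indexed by $H$, and those indexed by $G \setminus H$ map outside $H$ and are killed by $\pi$. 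This degree-matching, governed by the subgroup property of $H$, is the crux of the argument and the step I would expect to require the most care in bookkeeping, though it is conceptually routine.

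For the final assertion, suppose $r \in (T \rtimes_{\sigma'}^{\alpha'} H) \cap (T \rtimes_{\sigma}^{\alpha} G)^\times$, so there exists $s \in T \rtimes_{\sigma}^{\alpha} G$ with $rs = sr = 1$. Applying $\pi$ and using that it is a $T \rtimes_{\sigma'}^{\alpha'} H$-bimodule homomorphism fixing $r$, I would compute $r\, \pi(s) = \pi(r s) = \pi(1) = 1$ and likewise $\pi(s)\, r = 1$, where the last equality $\pi(1) = 1$ holds because $1 = 1_T u_e$ with $e \in H$. Hence $\pi(s) \in T \rtimes_{\sigma'}^{\alpha'} H$ is a two-sided inverse for $r$ within the subring, giving $r \in (T \rtimes_{\sigma'}^{\alpha'} H)^\times$. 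The main obstacle, as noted, is verifying the bimodule property cleanly, since one must track how the grading interacts with projection; but once the subgroup closure $ag \in H \iff g \in H$ is exploited, the invertibility transfer follows formally.
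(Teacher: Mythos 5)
Your proof is correct and is precisely the argument the paper invokes: the paper's proof simply cites Lemmas 1.2 and 1.4 of Passman's book on group rings and asserts the argument adapts to non-associative crossed products, and your three steps (restriction of the crossed system, the projection $\pi$ onto the $H$-graded part being a bimodule homomorphism via the subgroup closure $ag \in H \iff g \in H$, and then applying $\pi$ to $rs = sr = 1$) are exactly that adaptation, written out in full.
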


\begin{proof}
The argument used for the proofs of Lemma 1.2 and Lemma 1.4 in \cite{passman1977},
for associative group rings, can easily be adapted to the case of 
non-associative crossed products.
\end{proof}

\begin{defn}
If a non-associative crossed product $T \rtimes_{\sigma}^{\alpha} G$
satisfies $\alpha(g,h)=1$, for all $g,h \in G$, then
we call it a {\it non-associative skew group ring} 
and denote it by $T \rtimes_{\sigma} G$;
in this case $\sigma : G \rightarrow {\rm Aut}(T)$
is a group homomorphism.
On the other hand, if a non-associative crossed product $T \rtimes_{\sigma}^{\alpha} G$
satisfies $\sigma_g = \identity_T$, for all $g \in G$, then
we call it a {\it non-associative twisted group ring} 
and denote it by $T \rtimes^{\alpha} G$.
\end{defn}

In the sequel, we shall refer to the following set
\begin{displaymath}
	L = \{ g \in G \mid \mbox{ $\sigma_g$ acts as conjugation
by an invertible element of $N(T)^G$ } \}.
\end{displaymath}

\begin{lem}\label{lemmacentertorsion}
Suppose that $T \rtimes_{\sigma} G$ is a non-associative skew group ring,
where $G$ is an abelian group and $T$ is $G$-simple.
Then $Z(T \rtimes_{\sigma} G)$ equals a 
twisted group ring $F \rtimes^{\alpha} L$,
where $F = T^G \cap Z(T)$.
\end{lem}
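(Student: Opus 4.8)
The plan is to read the center off from Proposition~\ref{centercrossed} and then use $G$-simplicity to convert the resulting descriptive conditions into the asserted twisted-group-ring structure. First I would specialize Proposition~\ref{centercrossed} to the present situation. Since $\alpha \equiv 1$ and $G$ is abelian, condition (ii) collapses to $\sigma_h(t_g) = t_g$ for all $h \in G$, i.e.\ $t_g \in T^G$, while $\Delta(G) = G$ makes (iv) vacuous. Hence, writing $R = T \rtimes_\sigma G$, an element $r = \sum_g t_g u_g$ lies in $Z(R)$ precisely when each coefficient satisfies $t_g \in T^G \cap N(T) = N(T)^G$ together with the relation
\begin{equation*}
t\, t_g = t_g\, \sigma_g(t) \qquad (t \in T).
\end{equation*}

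Next I would promote every nonzero coefficient to a unit. Rewriting the displayed relation as $t_g b = \sigma_g^{-1}(b)\, t_g$ shows $T t_g = t_g T$, and since $t_g \in N(T)$ this common set is a two-sided ideal of $T$; it is $G$-invariant because $t_g \in T^G$ and each $\sigma_h$ is a ring automorphism. By $G$-simplicity it equals $T$, so $t_g$ admits both a left and a right inverse and is therefore invertible; Proposition~\ref{propNRunit} places $t_g^{-1}$ in $N(T)$, and it is $G$-fixed, so $t_g \in (N(T)^G)^\times$. The relation now reads $\sigma_g(t) = t_g^{-1} t\, t_g$, exhibiting $\sigma_g$ as conjugation by an invertible element of $N(T)^G$; thus $g \in L$, and we conclude $\supp(r) \subseteq L$. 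Conversely, for $g \in L$ any conjugator $w_g \in (N(T)^G)^\times$ gives $w_g u_g \in Z(R)$.

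I would then pin down the coefficient module at each $g \in L$. Fixing $w_g$ as above, any admissible $t_g$ yields $c := t_g w_g^{-1}$, which is nuclear (both factors lie in $N(T)$) and central (the relation forces $tc = ct$), hence lies in $Z(T)$; it is also $G$-fixed, so $c \in T^G \cap Z(T) = F = Z(T)^G$, which is a field by Proposition~\ref{propZTG}. Therefore the set of admissible coefficients at $g$ is exactly $F w_g$, and
\begin{equation*}
Z(R) = \bigoplus_{g \in L} F\, v_g, \qquad v_g := w_g u_g,
\end{equation*}
a free $F$-module with basis $\{ v_g \}_{g \in L}$.

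Finally I would extract the multiplicative structure. A short computation using $w_{g'} \in T^G$ gives $v_g v_{g'} = (w_g w_{g'}) u_{gg'}$; since $w_g w_{g'}$ and $w_{gg'}$ both implement $\sigma_{gg'}$ by conjugation, the element $\alpha(g,g') := w_g w_{g'} w_{gg'}^{-1}$ is nuclear, central and $G$-fixed, so $\alpha(g,g') \in F^\times$ and $v_g v_{g'} = \alpha(g,g')\, v_{gg'}$. In particular $L$ is a subgroup of $G$, the field $F$ is central with trivial action (each $v_g$ commutes with $F \subseteq Z(R)$), and the cocycle conditions (N2)--(N3) for $\alpha$ follow from associativity and unitality in $Z(R)$. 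This identifies $Z(R)$ with the twisted group ring $F \rtimes^\alpha L$. I expect the main obstacle to be the non-associative bookkeeping --- verifying that $T t_g$ is genuinely an ideal, that $c$ lands in $Z(T)$, and that $\alpha$ lands in $F^\times$ and is a cocycle --- but every element involved beyond a single free variable lies in $N(T)$, so the nuclear identities supply all the associativity these manipulations require.
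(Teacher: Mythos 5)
Your proof is correct and follows essentially the same route as the paper's: specialize Proposition~\ref{centercrossed} to the abelian case, use $G$-simplicity to make every nonzero coefficient invertible (so the support lies in $L$), identify each coefficient space as a line $F s_g$, and extract the cocycle $\alpha(g,h)=s_g s_h s_{gh}^{-1}\in F$. The differences are cosmetic --- you justify invertibility via the $G$-invariant ideal $T t_g = t_g T$ where the paper merely asserts it, and you deduce (N2) from associativity inside $Z(R)\subseteq N(R)$ instead of the paper's direct computation; just mind the direction of conjugation, since with the convention $\sigma_g(t)=w_g t w_g^{-1}$ the central element is $w_g^{-1}u_g$ rather than $w_g u_g$.
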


\begin{proof}
Put $R = T \rtimes_{\sigma} G$.
Since $G$ is abelian, it follows from Proposition \ref{centercrossed} that
$Z(R) = \oplus_{g\in G} T_g u_g$, where for each $g \in G$, $T_g$
is the set of $t_g \in T^G \cap N(T)$ satisfying $t_g \sigma_g(t) = t t_g$, for $t \in T$.
From $G$-simplicity of $T$ it follows that each non-zero
$t_g \in T_g$ is invertible.
Thus, $Z(R) = \oplus_{g \in L} T_g u_g$.
Put $F = T_e$.
If $g \in L$ and $t_g,s_g \in T_g$ are non-zero,
then, for each $t \in T$, we have that 
$\sigma_g^{-1} \sigma_g (t) = t$.
Thus, $s_g t_g^{-1} t t_g s_g^{-1} = t$ and hence
$t t_g s_g^{-1} = t_g s_g^{-1} t$ from which it follows that
$t_g s_g^{-1} \in F$.
Therefore $t_g \in F s_g$.
We can thus write $Z(R) = \oplus_{g \in L} F d_g$,
where $d_g = s_g u_g$.
Take $g,h \in L$. Then it is clear that
$d_g d_h = \alpha(g,h) d_{gh}$, where
$\alpha(g,h) = s_g s_h s_{gh}^{-1}$.
Now we show that $\alpha(g,h) \in F$.
Since $s_g , s_h , s_{gh} \in T^G$,
it follows that $\alpha(g,h) \in T^G$.
Next we show that $\alpha(g,h) \in C(T)$.
Take $t \in T$. Then 
$t \alpha(g,h) = 
t s_g s_h s_{gh}^{-1} =
s_g \sigma_g(t) s_h s_{gh} ^{-1} =
s_g s_h \sigma_{hg}(t) s_{gh}^{-1} =
s_g s_h \sigma_{gh}(t) s_{gh}^{-1} =
s_g s_h s_{gh}^{-1} t = \alpha(g,h) t$.
Since we already know that $s_g , s_h , s_{gh}^{-1} \in N(T)$,
we thus get that $\alpha(g,h) \in Z(T)$.
Finally, we show (N2) of Definition \ref{defcrossedsystem}.
To this end, take $g,h,p \in G$. Then 
\begin{align*}
\alpha(g,h) \alpha(gh , p) =& 
s_g s_h s_{gh}^{-1} s_{gh} s_p   s_{ghp}^{-1} =
s_g s_h                    s_p   s_{ghp}^{-1} =
s_g \sigma_g(s_h s_p) s_{ghp}^{-1} =
s_h s_p s_g s_{ghp}^{-1} \\
=& s_h s_p \sigma_{hp}(s_g)         s_{ghp}^{-1} =
s_h s_p s_{hp}^{-1} s_g s_{hp}   s_{ghp}^{-1} =
\alpha(h,p) \alpha(g,hp).
\end{align*}
Thus $Z(R) = F \rtimes^{\alpha} L$.
\end{proof}

The next result is a non-associative generalization
of \cite[Proposition 5.7]{bell1987}.

\begin{prop}
Suppose that $T \rtimes_{\sigma} G$ is a non-associative skew group ring,
where $G$ is an abelian group.
The following two assertions hold:
\begin{itemize}

\item[(a)] $T \rtimes_{\sigma} G$ is simple if and only if
$T$ is $G$-simple, 
$Z( T \rtimes_{\sigma}^{\alpha} G )$ is a domain, and
$L$ is a torsion group;

\item[(b)] If $T$ is commutative, then 
$T \rtimes_{\sigma} G$ is simple if and only if
$T$ is $G$-simple and $L = \{ e \}$.

\end{itemize}
\end{prop}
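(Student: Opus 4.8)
The plan is to apply the structural results already developed, especially Lemma~\ref{lemmacentertorsion}, which identifies $Z(R)$ with the twisted group ring $F \rtimes^\alpha L$ where $R = T \rtimes_\sigma G$ and $F = T^G \cap Z(T)$. Since $G$ is abelian and torsion-free is \emph{not} assumed here, I cannot invoke Theorem~\ref{nonassociativebell} directly; instead the strategy is to reduce simplicity of $R$ to the condition that $Z(R)$ be a field, via Theorem~\ref{maintheorem}. For part (a), the first step is to observe that by Proposition~\ref{propGsimple}, $G$-simplicity of $T$ is equivalent to graded simplicity of $R$, so Theorem~\ref{maintheorem} tells us $R$ is simple if and only if $T$ is $G$-simple and $Z(R)$ is a field. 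The entire problem thus becomes: \emph{when is the twisted group ring $F \rtimes^\alpha L$ a field?}

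The key step is therefore to analyze $Z(R) = F \rtimes^\alpha L$. By Proposition~\ref{propZTG}, $F = Z(T)^G$ is already a field (using $G$-simplicity). So I would prove the elementary fact that a twisted group ring $F \rtimes^\alpha L$ over a field $F$, with $L$ abelian, is a field if and only if it is a domain and $L$ is a torsion group. For the forward direction of (a), if $R$ is simple then $Z(R)$ is a field, hence certainly a domain; and I would show $L$ must be torsion by noting that if $g \in L$ had infinite order, then $d_g$ would generate a subalgebra isomorphic to a Laurent-type ring $F[d_g, d_g^{-1}]$ which is not a field (it has non-invertible elements such as $1 + d_g$ when $F$ is suitably chosen, or more robustly, an infinite-order $d_g$ forces a copy of the group algebra of $\Z$, which is never a field). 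For the converse, if $Z(R)$ is a domain and $L$ is torsion, then $F \rtimes^\alpha L$ is an integral domain that is integral over the field $F$, and a commutative domain integral over a field is itself a field; this yields simplicity by Theorem~\ref{maintheorem}.

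For part (b), when $T$ is commutative the picture simplifies because $N(T) = T$ and $Z(T) = T$, so $F = T^G$ and the set $L$ consists of those $g$ for which $\sigma_g$ is conjugation by an invertible element of $T^G$; but conjugation by an element of the commutative ring $T$ is trivial, so $\sigma_g = \id_T$ for $g \in L$. The claim $L = \{e\}$ is then equivalent to $\sigma$ acting faithfully enough that $Z(R) = F$ reduces to $R_e$-degree. Concretely, I would show $Z(R) = F \rtimes^\alpha L$ is a field precisely when $L = \{e\}$: if $L \neq \{e\}$, pick $g \in L \setminus \{e\}$; since $T$ is commutative and $\sigma_g = \id$, the twisting is controlled and $Z(R)$ contains a group-algebra-like summand over the (now possibly non-torsion) $L$, and I would argue that commutativity forces $d_g$ to behave like a genuine group element, so $Z(R)$ fails to be a field unless $L$ is trivial. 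Conversely $L = \{e\}$ gives $Z(R) = F$, a field, hence $R$ is simple by Theorem~\ref{maintheorem}.

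The main obstacle I anticipate is the careful handling of the twisted group ring $F \rtimes^\alpha L$: specifically, showing that a commutative twisted group ring over a field is a field if and only if the group is trivial (in the commutative-$T$ case) or torsion (in the general case). The subtlety is that the $2$-cocycle $\alpha$ can in principle make $F \rtimes^\alpha L$ a field even for nontrivial finite $L$ (as with field extensions arising from nontrivial Brauer classes), so the torsion-plus-domain hypothesis in (a) is doing real work, whereas in (b) the commutativity of $T$ collapses the cocycle's influence enough to force $L = \{e\}$. Pinning down exactly why the domain condition cannot be dropped in (a), and why commutativity sharpens the conclusion to $L = \{e\}$ in (b), is where the genuine content lies; the rest is assembling Proposition~\ref{propGsimple}, Proposition~\ref{propZTG}, Lemma~\ref{lemmacentertorsion}, and Theorem~\ref{maintheorem}.
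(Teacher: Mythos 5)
Your overall strategy coincides with the paper's: reduce simplicity to ``$Z(R)$ is a field'' via Theorem~\ref{maintheorem} and Proposition~\ref{propGsimple}, identify $Z(R)=F\rtimes^{\alpha}L$ by Lemma~\ref{lemmacentertorsion}, and in the converse direction of (a) use integrality of the torsion-supported elements over the field $F$ to upgrade ``domain'' to ``field'' --- that last argument is essentially verbatim the paper's. However, there is a genuine gap in your proof that $L$ must be torsion when $R$ is simple. You argue that an infinite-order $g\in L$ produces inside $Z(R)$ a copy of the Laurent ring $F[d_g,d_g^{-1}]$, ``which is never a field,'' and conclude that $Z(R)$ is not a field. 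That inference is a non sequitur: a field may perfectly well contain subrings that are not fields (e.g.\ $\Z\subset\Q$). What you must show is that invertibility of $1+d_g$ \emph{in} $Z(R)$ descends to invertibility in the subring $F\rtimes^{\alpha}\langle g\rangle\cong F[x,x^{-1}]$, whose units are exactly the nonzero homogeneous elements $\lambda x^{n}$, so that $1+x$ is visibly not among them. This descent step is precisely what the paper's Lemma~\ref{lemunit} supplies: the paper applies it in $R$ to the central element $x=1+uu_g$, obtaining an inverse inside $T\rtimes_{\sigma'}^{\alpha'}\langle g\rangle$, and then runs a leading/trailing-coefficient argument there. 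In your setting you could instead apply Lemma~\ref{lemunit} to the crossed product $F\rtimes^{\alpha}L$ with subgroup $\langle g\rangle$, or note that $Z(R)$ is a commutative associative ring graded by $L/\langle g\rangle$ whose identity component is $F\rtimes^{\alpha}\langle g\rangle$, and that an invertible element of the identity component of a group-graded ring has its inverse in the identity component. Without some such step the argument fails as written.

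Two smaller points. First, in (b) your claim that commutativity of $T$ gives $N(T)=T$ and $Z(T)=T$ is false for non-associative rings: a commutative non-associative ring can have a proper nucleus, and the correct statement is $Z(T)=N(T)\cap C(T)=N(T)$, so $F=T^{G}\cap N(T)$. This slip is harmless, because all you actually need is that $\sigma_g=\id_T$ for $g\in L$ (conjugation by a nuclear element of a commutative ring is trivial), whence one may choose $s_g=1$ in Lemma~\ref{lemmacentertorsion}, the cocycle trivializes, and $Z(R)=F[L]$ is an honest group ring. Second, your treatment of (b) stops at ``$d_g$ behaves like a genuine group element, so $Z(R)$ fails to be a field unless $L$ is trivial''; the missing content is the paper's explicit computation: having reduced to torsion $L$ by part (a), any $g\in L\setminus\{e\}$ of order $n\geq 2$ yields the zero-divisor relation $(1-u_g)(1+u_g+\cdots+u_g^{\,n-1})=1-u_g^{\,n}=0$ in $Z(R)$, which contradicts $Z(R)$ being a field directly, with no descent lemma needed in this torsion case. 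With these repairs your proof closes and is, in substance, the paper's proof.
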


\begin{proof}
Put $R = T \rtimes_{\sigma} G$.

(a) Suppose that $R$ is simple.
From Theorem \ref{maintheorem} and Proposition \ref{propGsimple}, 
it follows that $T$ is $G$-simple and that $Z(R)$ is a field
(hence a domain). Now we show that $L$ is a torsion group.
Seeking a contradiction, suppose that there is $g \in L$
of infinite order and that there is an invertible $u \in T^G$
such that $\sigma_g$ acts as conjugation by $u$.
By Proposition \ref{centercrossed}, the element
$x = 1 + u u_g$ belongs to $Z(R)$.
Put $S = T \rtimes_{\sigma'}^{\alpha'} \langle g \rangle$.
By Lemma \ref{lemunit}, we get that $x$ is invertible in $S$.
Suppose that $x^{-1} = \sum_{i=a}^b t_i u_{g^i}$,
where $a,b \in \mathbb{Z}$, satisfy $a \leq b$ and
both $t_a$ and $t_b$ are non-zero.
From the equality $x^{-1} x = 1$, we get, since $g$
is of infinite order, that either
$t_a u_{g^a} = 0$ or $t_b \sigma_{g^b}(u) \alpha(g^b,g) u_{g^{b+1}} = 0$.
This is clearly not possible.
Thus, $L$ is a torsion group.

Now suppose that $T$ is $G$-simple, 
$Z( R )$ is a domain, and that $L$ is a torsion group.
From Theorem \ref{maintheorem} and Proposition~\ref{propGsimple}, it follows that
$R$ is simple if we can show that $Z(R)$ is a field.
To this end, by Proposition~\ref{centerInvClosed}, it is enough to show that 
each non-zero element of $Z(R)$ has a multiplicative inverse in $R$.
From Lemma \ref{lemmacentertorsion} it follows that
$Z(R) = F \rtimes^{\alpha} L$.
Since $L$ is a torsion group it follows that
every $x = f d_g$, $f \in F$, $g \in L$, satisfies
$x^n \in F$ for some positive integer $n$.
Since integral elements over an integral domain
form a ring, it follows that any non-zero 
element $x \in Z(R)$ is integral over $F$.
Let $K$ denote the field of fractions of $Z(R)$.
Then $x^{-1}$, considered as an element of $K$,
belongs to $F[x] \subseteq Z(R)$.

(b) Suppose that $R$ is simple.
From (a) we get that $T$ is $G$-simple
and that $L$ is a torsion group.
From the proof of (a), we get that
the field $Z(R)$ equals the group ring $F[L]$.
Seeking a contradiction, suppose that there is
some $g \in L \setminus \{e\}$.
Since $L$ is torsion, there is an integer $n \geq 2$
such that $g^n = 1$.
But then $(1-g)(1 + g + \cdots + g^{n-1}) = 1 - g^n = 0$.
Therefore, $1-g$ can not be invertible.
Hence $L = \{ e \}$.

Now suppose that $T$ is $G$-simple and that $L = \{ e \}$.
From the proof of (a) we get that $Z(R) = F$
which is a field and, hence, a domain.
Thus, simplicity of $R$ follows from (a).
\end{proof}

\begin{defn}
Suppose that $T \rtimes_{\sigma} G$ is a non-associative
skew group ring.
If $G$ is a torsion-free finitely generated
abelian group, with generators $g_1,\ldots,g_n$, for some
positive integer $n$, then  
$T \rtimes_{\sigma} G$ equals the 
{\it non-associative skew Laurent polynomial ring}
$T[x_1,\ldots,x_n,x_1^{-1},\ldots,x_n^{-1} ; \sigma_1,\ldots,\sigma_n],$
where $\sigma_i := \sigma_{g_i}$, for $i \in \{1 , \ldots , n\}$.
In that case, $T$ is called \emph{$(\sigma_1,\ldots,\sigma_n)$-simple}
if there is no ideal $I$ of $T$, other than $\{ 0 \}$ and $T$,
for which $\sigma_i(I) \subseteq I$ holds for all $i \in \{1,\ldots,n\}$.
\end{defn}

The following result generalizes
\cite[Theorem 1]{jordan1984} and the main result in \cite{voskoglou1987}.

\begin{thm}\label{nonassociativevoskoglou}
A non-associative skew Laurent polynomial ring
\begin{displaymath}
	T[x_1,\ldots,x_n,x_1^{-1},\ldots,x_n^{-1} ; \sigma_1,\ldots,\sigma_n]
\end{displaymath}
is simple if and only if $T$ is $(\sigma_1,\ldots,\sigma_n)$-simple
and there do not exist $u \in \cap_{i=1}^n ( T^{\times} )^{\sigma_i}$
and a non-zero $(m_1,\ldots,m_n) \in \mathbb{Z}^n$
such that for every $t \in T$,
the relation $( \sigma_1^{m_1} \circ \cdots \circ \sigma_n^{m_n} )(t) = 
u t u^{-1}$ holds. 
\end{thm}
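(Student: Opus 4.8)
The plan is to derive Theorem~\ref{nonassociativevoskoglou} as a direct specialization of the equivalence (i)$\Leftrightarrow$(iii) in Theorem~\ref{nonassociativebell}, applied to the group $G = \Z^n$ with its standard generators $g_1,\ldots,g_n$. Since $\Z^n$ is torsion-free, finitely generated, abelian, and in particular hypercentral, Theorem~\ref{nonassociativebell} applies verbatim to $R = T \rtimes_\sigma G = T[x_1,\ldots,x_n,x_1^{-1},\ldots,x_n^{-1};\sigma_1,\ldots,\sigma_n]$. The first step is to observe that, because $G$ is abelian, the cocycle is trivial ($\alpha \equiv 1$, as this is a skew group ring), so the two conditions appearing in Theorem~\ref{nonassociativebell}(iii) simplify considerably: the relation $\sigma_h(u) = u\,\alpha(g,h)\alpha(h,g)^{-1}$ collapses to $\sigma_h(u) = u$ for all $h \in G$, and $\sigma_g(t) = u^{-1} t u$ remains as is. Since $Z(G) = G$ for an abelian group, the condition $g \in Z(G)\setminus\{e\}$ becomes simply $g \in G \setminus \{e\}$.

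Next I would translate the group-theoretic data into the multi-index notation of the theorem. An arbitrary non-identity element $g \in G = \Z^n$ is written uniquely as $g = g_1^{m_1}\cdots g_n^{m_n}$ for a non-zero tuple $(m_1,\ldots,m_n) \in \Z^n$, and the corresponding automorphism is $\sigma_g = \sigma_1^{m_1}\circ\cdots\circ\sigma_n^{m_n}$. The condition ``$\sigma_h(u) = u$ for all $h \in G$'' is equivalent to ``$\sigma_i(u) = u$ for $i = 1,\ldots,n$,'' i.e.\ $u \in \cap_{i=1}^n (T^\times)^{\sigma_i}$, since the $\sigma_i$ generate $\sigma(G)$. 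Under this dictionary, the negated condition (iii) of Theorem~\ref{nonassociativebell}---namely the non-existence of such $u$ and $g$---is literally the non-existence statement in Theorem~\ref{nonassociativevoskoglou}. I would also note that $G$-simplicity of $T$ with respect to the full group $\Z^n$ coincides with $(\sigma_1,\ldots,\sigma_n)$-simplicity: a $G$-invariant ideal is precisely one satisfying $\sigma_i(I)\subseteq I$ for all $i$, since invariance under the generators forces invariance under all of $G$.

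With these identifications in place, the equivalence (i)$\Leftrightarrow$(iii) of Theorem~\ref{nonassociativebell} reads exactly as the claimed statement: $R$ is simple if and only if $T$ is $(\sigma_1,\ldots,\sigma_n)$-simple and no such $u$ and non-zero multi-index exist. The proof is therefore essentially a matter of unwinding definitions and checking that the abelian, trivial-cocycle, free-generator setting makes the abstract criterion coincide with the concrete one.

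The main obstacle I anticipate is not conceptual but bookkeeping: one must verify carefully that invariance under the generators $\sigma_1,\ldots,\sigma_n$ really does capture $G$-invariance and $\sigma_g$-conjugation for \emph{all} $g$, keeping in mind that in the non-associative setting conjugation $u^{-1}tu$ need not behave as smoothly as in the associative case. In particular one should confirm that requiring $u \in \cap_{i=1}^n (T^\times)^{\sigma_i}$ (fixed by each generator) is genuinely equivalent to being fixed by every $\sigma_h$, which holds because $\sigma$ is a group homomorphism into $\Aut(T)$ and the $g_i$ generate $G$. Once this is confirmed, the remaining translation is routine.
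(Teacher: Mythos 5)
Your proposal is correct and is essentially the paper's own proof: the paper, too, obtains Theorem \ref{nonassociativevoskoglou} by remarking that $\Z^n$-simplicity of $T$ coincides with $(\sigma_1,\ldots,\sigma_n)$-simplicity and then specializing Theorem \ref{nonassociativebell} to $G=\Z^n$ with trivial cocycle. Two of your translation claims are, however, stated too strongly. First, an individual ideal satisfying $\sigma_i(I)\subseteq I$ for all $i$ need \emph{not} be $G$-invariant (inclusion under the generators does not give inclusion under their inverses); the equivalence holds only at the level of simplicity, for instance because $\sum_{k\in\N^n}\sigma_1^{-k_1}\circ\cdots\circ\sigma_n^{-k_n}(I)$ is a non-zero proper $G$-invariant ideal whenever $I$ is a non-zero proper generator-invariant one. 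Second, the specialized condition $\sigma_g(t)=u^{-1}tu$ of Theorem \ref{nonassociativebell}(iii) is not \emph{literally} the condition $\sigma_g(t)=utu^{-1}$ of Theorem \ref{nonassociativevoskoglou}: one must trade $u$ for $u^{-1}$ and check that $u^{-1}$ is still fixed by every $\sigma_i$, a point needing a word of care in the non-associative setting, where two-sided inverses are not automatically unique --- your argument that elements fixed by the generators are fixed by all of $G$ applies to $u$ itself, but does not transfer to $u^{-1}$ for free. Both repairs are short, and the paper's own two-line proof glosses over exactly the same points, so these are blemishes in the write-up rather than failures of the approach.
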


\begin{proof}
We notice that $\Z^n$-simplicity of $T$
is equivalent to $(\sigma_1,\ldots,\sigma_n)$-simplicity of $T$.
The proof now
follows from Theorem \ref{nonassociativebell}.
\end{proof}

\section{Application: Cayley-Dickson doublings}\label{cayleydickson}

In this section, we apply Theorem \ref{maintheorem} to
obtain a simplicity result for Cayley-Dickson doublings
(see Theorem \ref{genmccrimmon}) originally 
obtained by K. McCrimmon \cite[Theorem 6.8(vi)]{mccrimmon1985} by other means.
To do this we first need to translate what graded simplicity
means for Cayley-Dickson doublings (see Proposition \ref{translategradedsimple}).
After that, we describe the center of Cayley-Dickson doublings
(see Proposition \ref{translatecenter}).

Throughout this section, $A$ denotes
a non-associative unital algebra with involution
$* : A \rightarrow A$ 
defined over an arbitrary unital, commutative and associative ring of scalars $K$.
Take a self-adjoint cancellable scalar $\mu \in K$.
By this we mean that $\mu^*=\mu$ and that
whenever $\mu k = 0$ (or $\mu a = 0$), 
for some $k \in K$ (or some $a \in A$),
then $k = 0$ (or $a = 0$).
We can now construct a new algebra,
the so-called \emph{Cayley Double of $A$}, denoted by $C(A,\mu)$, as
$A \oplus A$ with involution 
$(a , b)^* = (a^* , -b)$ 
and product 
$(a , b)(c , d) = (ac + \mu d^* b , da + b c^*).$
We can write this formally as
$C(A,\mu) = A \oplus A l$ with product
$(a + bl)(c + dl) = (ac + \mu d^* b) + (da + b c^*)l$
and involution $(a+bl)^* = a^* - bl.$
An ideal $I$ of $A$ is called \emph{$*$-invariant}
if $I^* \subseteq I$. The ring $A$ is called
\emph{$*$-simple} if there are no $*$-invariant
ideals of $A$ other that $\{ 0 \}$ and $A$ itself.
It is clear that if we put $R_0 = A$ and
$R_1 = Al$, then this defines a $\mathbb{Z}/2\mathbb{Z}$-gradation on $R = C(A,\mu)$.

\begin{prop}\label{translategradedsimple}
The ring $C(A,\mu)$ is graded simple 
if and only if $A$ is $*$-simple and $\mu \in Z(A)^{\times}$.
\end{prop}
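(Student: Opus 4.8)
The plan is to analyze graded ideals of $R=C(A,\mu)$ directly in terms of their homogeneous components. Since the gradation is $R_0=A$ and $R_1=Al$, every graded ideal has the form $I=I_0\oplus I_1l$, where $I_0=I\cap A$ is an ideal of $A$ and $I_1\subseteq A$ is the additive subgroup with $I_1l=I\cap Al$. Before splitting into the two directions, I would record the handful of products that drive everything: from the multiplication rule one computes $l^2=\mu 1_A$, and for $x\in A$ one has $lx=x^*l$ while $xl$ has coefficient $x$, and $(yl)l=\mu y$ for $y\in A$. These identities let me move between the two homogeneous components of $I$ by multiplying by $l$, at the cost of applying $*$ or multiplying by the scalar $\mu$.

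For the ``if'' direction, assume $A$ is $*$-simple and $\mu\in Z(A)^\times$; then $l$ is invertible in $R$ (with $l^{-1}=\mu^{-1}l$, using that $\mu^{-1}1_A$ is central). Given a nonzero graded ideal $I=I_0\oplus I_1l$, I would first show $I_0\neq\{0\}$: if some $0\neq x\in I_1$, then $(xl)l=\mu x\in I\cap A=I_0$ is nonzero since $\mu$ is cancellable. Next I would show $I_0$ is $*$-invariant: for $x\in I_0$, the element $lx=x^*l$ lies in $I\cap Al$, so $x^*\in I_1$, and then $(x^*l)l=\mu x^*\in I_0$; since $\mu^{-1}1_A\in Z(A)\subseteq A$ and $I_0$ is a ring ideal, multiplying by $\mu^{-1}1_A$ gives $x^*\in I_0$. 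Thus $I_0$ is a nonzero $*$-invariant ideal of $A$, hence $I_0=A$ by $*$-simplicity, so $1\in I$ and $I=R$. This shows $R$ is graded simple.

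For the ``only if'' direction, assume $R$ is graded simple. To get $*$-simplicity of $A$, take a nonzero $*$-invariant ideal $J$ of $A$ and verify that $J\oplus Jl$ is a graded ideal of $R$; the nontrivial closure conditions reduce to $J^*\subseteq J$, to $J$ being an ideal, and to $\mu J=(\mu 1_A)J\subseteq AJ\subseteq J$. Graded simplicity then forces $J\oplus Jl=R$, whence $J=A$. To get $\mu\in Z(A)^\times$, I would instead check that $\mu A\oplus Al$ is a (nonzero, since it contains $Al$) graded ideal; here the conditions landing in the top component $I_1=A$ are automatic, and the remaining ones follow from centrality of $\mu 1_A$. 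Graded simplicity gives $\mu A\oplus Al=R$, so $\mu A=A$, and in particular $1=(\mu 1_A)c$ for some $c\in A$. Since $\mu 1_A\in Z(A)$, Proposition~\ref{centerInvClosed} (applied with ambient ring $A$) yields $c\in Z(A)$ and a two-sided inverse, so $\mu\in Z(A)^\times$.

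The main obstacle will be the bookkeeping of the ideal-closure conditions for $I=I_0\oplus I_1l$: because the multiplication twists the top component by $*$ and the bottom by $\mu$, one must carefully expand all four cross products of $A\oplus Al$ against $I_0\oplus I_1l$, and keep track of where non-associativity could intervene. The point that makes the argument go through cleanly is that every ``scalar correction'' uses $\mu 1_A$ (or its inverse), which lies in $Z(A)$ and therefore associates and commutes with everything, so the non-associativity of $A$ never actually obstructs the passage between the two homogeneous layers.
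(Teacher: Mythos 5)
Your proposal is correct and takes essentially the same approach as the paper: both arguments analyze a graded ideal through its homogeneous components $I\cap A$ and $I\cap Al$, use the identities $lxl=\mu x^{*}$ and $(xl)l=\mu x$ together with cancellability of $\mu$, and invoke Proposition~\ref{centerInvClosed} to upgrade $1=\mu c$ to $\mu\in Z(A)^{\times}$. The only, cosmetic, divergence is the invertibility step: the paper applies the just-established $*$-simplicity of $A$ to the $*$-invariant ideal $A\mu$, whereas you apply graded simplicity of $C(A,\mu)$ to the graded ideal $\mu A\oplus Al$ — both work equally well.
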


\begin{proof}
We first show the ''only if'' statement.
Suppose that $C(A,\mu)$ is graded simple.
Let $I$ be a non-zero $*$-invariant ideal of $A$.
Then $\langle I \rangle = I + Il$ is a non-zero graded ideal of $C(A,\mu)$.
By graded simplicity of $C(A,\mu)$ we get that 
$\langle I \rangle = C(A,\mu)$. In particular, we get that $1 \in \langle I \rangle$.
This implies that $1 \in \langle I \rangle \cap A = I$.
Hence, $I = A$. This shows that $A$ is $*$-simple.

Using that $\mu \in Z(A)$ and $\mu = \mu^*$ we get that
$A \mu$ is a non-zero $*$-invariant ideal of $A$.
Hence, $A \mu = A$, and in particular there is some $s\in A$ such that
$s\mu=\mu s = 1$. Using Proposition~\ref{centerInvClosed} we conclude that $\mu \in Z(A)^\times$.

Now we show the ''if'' statement. Suppose that $A$ is $*$-simple and that $\mu \in Z(A)^\times$. 
Let $J$ be a non-zero graded ideal of $C(A,\mu)$.
Put $I = J \cap A$.
Using that $\mu$ is cancellable, we get that $I$ is a non-zero ideal of $A$.
The multiplication rule in $C(A,\mu)$ yields $ldl=\mu d^*$, for any $d\in A$.
Using that $\mu\in Z(A)^\times$ we get
$I^* = \mu^{-1} l I l \subseteq I$,
and hence $I$ is $*$-invariant.
By the $*$-simplicity of $A$ we get that 
$I = A$. Therefore $A \subseteq J$ and hence $J = C(A,\mu)$.
This shows that $C(A,\mu)$ is graded simple.
\end{proof}

\begin{prop}\label{mccrimmonprop}
The center of $C(A,\mu)$ equals
$Z_*(A) + Z_{**}(A)l$, where 
$Z_*(A) = \{ a \in Z(A) \mid a = a^* \}$ and
$Z_{**}(A) = \{ a \in Z_*(A) \mid ab=ab^*,\ \forall b\in A \}$.
\end{prop}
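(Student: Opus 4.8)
The plan is to compute the center $Z(R)$ of $R = C(A,\mu)$ directly from the definitions, using the description $Z(R) = C(R) \cap N(R)$ together with the $\Z_2$-gradation $R_0 = A$, $R_1 = Al$. A general element of $R$ is $r = a + bl$ with $a,b \in A$, so the task reduces to extracting necessary and sufficient conditions on $a$ and $b$ from the requirements that $r$ commute with everything and associate with everything. First I would write down the product rule $(a+bl)(c+dl) = (ac + \mu d^* b) + (da + bc^*)l$ and use it to evaluate $r x$ and $x r$ for a \emph{homogeneous} $x$, namely $x = c$ and $x = dl$ separately; by bi-additivity, commuting with all of $A$ and all of $Al$ is equivalent to commuting with arbitrary $x$.

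The key steps would be organized by first handling the component $a$ and then the component $bl$. Commuting with an arbitrary $c \in A$ forces $ac = ca$ and $c^* b = b c^*$ (equivalently $cb = bc$, running over all $c$), while commuting with $dl$ for all $d \in A$ forces $da = d a^*$ (so $a = a^*$, taking into account invertibility/cancellability of nothing here — just letting $d$ range) and a relation matching the two $l$-components, roughly $\mu d^* b = \mu b d^*$ forcing $b$ central. I would then impose the nucleus conditions: $r \in N_l(R) \cap N_m(R) \cap N_r(R)$, testing associators $(r,x,y)$, $(x,r,y)$, $(x,y,r)$ on homogeneous $x,y$. The expectation is that the commuting conditions already force $a \in Z(A)$ with $a = a^*$ (i.e.\ $a \in Z_*(A)$) and force $b \in Z(A)$; the associator conditions on the $b$-part then squeeze out the extra condition $ab = ab^*$ for all $b$, i.e.\ the defining condition of $Z_{**}(A)$, and simultaneously confirm that the $b$-component must itself lie in $Z_{**}(A)$ rather than merely in $Z(A)$.

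The main obstacle will be the bookkeeping in the associator computations: because multiplication in $C(A,\mu)$ mixes the two summands and involves both $\mu$ and the involution $*$, each associator $(a+bl, c+dl, e+fl)$ expands into several terms, and isolating which algebraic identity in $A$ each term enforces requires care — in particular disentangling the genuinely new constraint $Z_{**}(A)$ (coming from how $l$ interacts with $*$) from the constraints already captured by $Z_*(A)$. I would exploit the gradation to test one homogeneous slot at a time, which keeps the number of simultaneous terms manageable, and I would repeatedly use the fact, already established for $Z(A)$ in Section~\ref{sectionnonassociativerings}, that central elements are nuclear and commuting, so that many associator terms in $A$ vanish outright once $a$ and $b$ are known to be central. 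The final step is to assemble the four scalar conditions into the two cleaned-up membership statements $a \in Z_*(A)$ and $b \in Z_{**}(A)$, matching the asserted description $Z_*(A) + Z_{**}(A)l$.
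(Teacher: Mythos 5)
Your plan---computing $Z(C(A,\mu))$ directly from the definitions, testing commutation and associator conditions slot-by-slot against homogeneous elements $c$ and $dl$---is viable and, if carried out, gives a correct proof. But it is genuinely different from what the paper does: the paper gives no computation at all and simply cites McCrimmon \cite[Theorem 6.8(xii)]{mccrimmon1985}. So your route buys a self-contained verification at the price of exactly the bookkeeping you anticipate, while the paper's ``proof'' is a one-line outsourcing to the literature.

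That said, several of your predicted intermediate conditions are misread from the product rule, and you should fix them before expanding associators. Commuting with $c \in A$ gives $ac=ca$ on the degree-$0$ part and $bc^*=bc$ on the $l$-part --- \emph{not} ``$cb=bc$''. This identity $bc^*=bc$ (for all $c$) is precisely the defining condition of $Z_{**}(A)$, so the $Z_{**}$-constraint on the $l$-component comes from the \emph{commuter}, not from the associators as you predict. Commuting with $dl$ gives $da=da^*$ (hence $a=a^*$ at $d=1$) on the $l$-part and $\mu d^*b=\mu b^*d$ (not $\mu b d^*$) on the degree-$0$ part; after cancelling $\mu$ and setting $d=1$ one gets $b=b^*$, and only by combining this with $bc^*=bc$ does one obtain $b\in C(A)$. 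What the associator conditions then supply is the nuclearity of $a$ and $b$. Finally, do not underestimate the sufficiency direction: to check that $b \in Z_{**}(A)$ makes $bl$ nuclear you will need, e.g., that the associator $(bl,c,d)$ reduces to $b[c^*,d^*]\,l$ once $b$ is nuclear, and killing it requires the identity $b(xy)=b(yx)$, obtained by alternating nuclearity with the rule $bx=bx^*$:
\begin{displaymath}
b(xy)=(bx)y=(bx^*)y=b(x^*y)=b\bigl((x^*y)^*\bigr)=b(y^*x)=(by^*)x=(by)x=b(yx).
\end{displaymath}
This is the one step that is not pure expansion, and it is easy to miss. As a practical shortcut, Proposition \ref{intersection} lets you verify only two of the three nucleus conditions together with the commuter, which shortens the case analysis considerably.
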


\begin{proof}
This is \cite[Theorem 6.8(xii)]{mccrimmon1985}.
\end{proof}

\begin{prop}\label{translatecenter}
The ring $Z(C(A,\mu))$ is a field if and only if either
(i) $* = \identity_A$, $Z(A)$ is a field
and $\mu$ is not a square in $Z(A)$, or
(ii) $* \neq \identity_A$ and $Z_*(A)$ is a field.
\end{prop}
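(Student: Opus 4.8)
The plan is to start from the explicit description of the center supplied by Proposition~\ref{mccrimmonprop}, namely $Z(C(A,\mu)) = Z_*(A) + Z_{**}(A)l$, and to first pin down the multiplication inside this ring. Writing $F = Z_*(A)$ and $D = Z_{**}(A)$, one has $D \subseteq F \subseteq Z(A)$, and every element involved is central, nuclear and $*$-invariant; hence in the defining product $(a+bl)(c+dl) = (ac + \mu d^*b) + (da + bc^*)l$ one may replace $c^*$ by $c$ and $d^*$ by $d$, obtaining
\begin{displaymath}
(a + bl)(c + dl) = (ac + \mu bd) + (ad + bc)l, \qquad a,c \in F,\ b,d \in D.
\end{displaymath}
Here I use that $\mu$ is a central $*$-invariant scalar and that all factors lie in the associative commutative ring $Z(A)$, which both legitimizes the reassociations despite the non-associativity of $C(A,\mu)$ and shows that the product closes up inside $F + Dl$ (one checks $ac + \mu bd \in F$ and $ad, bc \in D$ using the defining relation of $Z_{**}(A)$). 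From Section~\ref{sectionnonassociativerings} we already know $Z(C(A,\mu))$ is commutative, associative and unital, so it remains to decide when every nonzero element is invertible.

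The key general step, valid for either involution, is that if $Z(C(A,\mu))$ is a field then $F = Z_*(A)$ is already a field. Indeed, given nonzero $a \in F$, write its inverse as $c + dl$ with $c \in F$, $d \in D$; specializing the displayed product with $b = 0$ gives $ac = 1$ and $ad = 0$, so $a$ is invertible inside $F$. This is the same mechanism as in Proposition~\ref{centerInvClosed} and Proposition~\ref{centerfield}.

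Next I split according to the involution. If $* = \identity_A$, then $F = D = Z(A)$, and the displayed product identifies $Z(C(A,\mu))$ with the quadratic extension $Z(A)[x]/(x^2 - \mu)$ generated by $l$ with $l^2 = \mu$. By the previous paragraph $Z(A)$ must be a field, and over a field this quotient is a field exactly when $x^2 - \mu$ is irreducible, i.e.\ when $\mu$ is not a square; conversely $\mu = t^2$ yields the zero divisor $(l-t)(l+t) = 0$ (a nonzero nilpotent if $2 = 0$), so the center fails to be a field. This gives alternative~(i). If $* \neq \identity_A$, I claim $Z(C(A,\mu))$ is a field iff $Z_*(A)$ is a field, which is alternative~(ii): the forward direction is the general step, and for the converse I show that $F$ being a field forces $D = \{0\}$, so that $Z(C(A,\mu)) = F$. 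Indeed, a nonzero $a \in D$ would be invertible in $F$, and its defining relation $a(b - b^*) = 0$ for all $b \in A$ would then force $b = b^*$ for every $b$, contradicting $* \neq \identity_A$.

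I expect the main obstacle to be the bookkeeping of the first paragraph: confirming that the product genuinely restricts to a well-defined operation on $F + Dl$ and that each reassociation is permitted even though $C(A,\mu)$ is not associative. Both points rest on the single fact that all relevant elements sit in the associative, commutative center $Z(A)$ and that $\mu$ is a central $*$-invariant scalar. Once the multiplication is made explicit, the remaining case analysis is routine field theory.
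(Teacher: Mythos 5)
Your proof is correct and follows essentially the same route as the paper: both start from Proposition~\ref{mccrimmonprop}, show that $Z_*(A)$ must be a field, identify the center with $Z(A)[X]/(X^2-\mu)$ when the involution is trivial, and use the dichotomy for $Z_{**}(A)$ inside the field $Z_*(A)$ (your invertibility argument for a nonzero $a \in Z_{**}(A)$ is just the paper's observation that $Z_{**}(A)$ is an ideal of $Z_*(A)$, phrased elementwise). The only difference is that you make explicit the restricted multiplication on $Z_*(A) + Z_{**}(A)l$ and thereby justify the step the paper dismisses as ``clear'' (that $Z_*(A)$ inherits being a field), which is a welcome but minor amplification.
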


\begin{proof}
From Proposition \ref{mccrimmonprop}, we get that
$Z(C(A,\mu)) = Z_*(A) + Z_{**}(A)l$.

First we show the ''only if'' statement.
Suppose that $Z(C(A,\mu))$ is a field.
It is clear that $Z_*(A)$ is a field.
Since $Z_{**}(A)$ is an ideal of $Z_*(A)$ we get two cases.
Case 1: $Z_{**}(A) = Z_*(A)$.
Since $1 \in Z_*(A)$, we get that $*=\identity_A$
and hence $Z(A) = Z_*(A) = Z_{**}(A)$ is a field.
Therefore, $Z(C(A,\mu)) = Z(A) + Z(A) l = Z(A)[X]/(X^2-\mu)$.
If $\mu$ is a square in $Z(A)$, then 
$Z(C(A,\mu)) = Z(A) \times Z(A)$ or $l^2 = 0$, 
depending on the characteristic of $Z(A)$. 
In either case,
$Z(C(A,\mu))$ is not a field.
Hence we get that $\mu$ is not a square in $Z(A)$.
Thus, (i) holds.
Case 2: $Z_{**}(A) = \{ 0 \}$. 
In this case $Z(C(A,\mu)) = Z_*(A)$ is a field.
Seeking a contradiction, suppose that $* = \identity_A$.
Then $ab=ab^*$, for all $a,b\in A$, and hence $Z_*(A)=Z_{**}(A) = \{0\}$
which is a contradiction. Thus, $*\neq \identity_A$.
This shows that (ii) holds.

Now we show the ''if'' statement.
First, suppose that (i) holds.
Then $Z_{**}(A) = Z_*(A) = Z(A)$ and thus
$Z(C(A,\mu)) = Z(A) + Z(A)l = Z(A)[X]/(X^2-\mu)$,
which is a field, since $\mu$ is not a square in $Z(A)$.
Now suppose that (ii) holds.
Since $Z_{**}(A)$ is an ideal of the field $Z_*(A)$
we get that either $Z_{**}(A) = Z_*(A)$ or $Z_{**}(A) = \{ 0 \}$.
If $Z_{**}(A) = Z_*(A)$, then $*=\identity_A$ which is a contradiction.
Therefore, $Z_{**}(A) = \{ 0 \}$ and hence
$Z(C(A,\mu)) = Z_*(A)$ is a field.
\end{proof}

\begin{thm}\label{genmccrimmon}
The ring $C(A,\mu)$ is simple if and only if $A$ is $*$-simple and either
(i) $A$ has trivial involution, $Z(A)$ is a field
and $\mu$ is not a square in $Z(A)$, or
(ii) $A$ has non-trivial involution and $Z_*(A)$ is a field.
\end{thm}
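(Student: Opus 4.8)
The plan is to prove Theorem~\ref{genmccrimmon} by combining the main theorem (Theorem~\ref{maintheorem}) with the two structural results already established for Cayley-Dickson doublings, namely Proposition~\ref{translategradedsimple} (which translates graded simplicity) and Proposition~\ref{translatecenter} (which characterizes when the center is a field). The key observation is that $R = C(A,\mu)$ is naturally $\Z_2$-graded via $R_0 = A$ and $R_1 = Al$, and since $\Z_2$ is abelian it is in particular hypercentral. Thus Theorem~\ref{maintheorem} applies directly and tells us that $C(A,\mu)$ is simple if and only if it is graded simple and $Z(C(A,\mu))$ is a field.

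First I would invoke Theorem~\ref{maintheorem} to reduce the claim to the conjunction of two conditions: \emph{(a)} $C(A,\mu)$ is graded simple, and \emph{(b)} $Z(C(A,\mu))$ is a field. The whole proof then becomes a matter of substituting the known characterizations of these two conditions and checking that the resulting criterion matches the statement of the theorem. For \emph{(a)}, Proposition~\ref{translategradedsimple} gives that graded simplicity is equivalent to ``$A$ is $*$-simple and $\mu \in Z(A)^{\times}$.'' For \emph{(b)}, Proposition~\ref{translatecenter} gives that $Z(C(A,\mu))$ is a field if and only if either (i) $* = \identity_A$, $Z(A)$ is a field and $\mu$ is not a square in $Z(A)$, or (ii) $* \neq \identity_A$ and $Z_*(A)$ is a field.

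The remaining work is to reconcile these two characterizations with the stated criterion, and the one point requiring genuine attention is the hypothesis ``$\mu \in Z(A)^{\times}$'' coming from graded simplicity. I would argue that this invertibility condition is automatically subsumed once \emph{(b)} holds. In case (i) of Proposition~\ref{translatecenter}, $Z(A)$ is a field and $\mu \neq 0$ (it is not a square, hence nonzero), so $\mu$ is automatically invertible in $Z(A)$; in case (ii), one checks that $*$-simplicity of $A$ together with $Z_*(A)$ being a field already forces $\mu$ to be an invertible central element. Thus the condition $\mu \in Z(A)^{\times}$ does not need to appear explicitly in the final statement, since it is a consequence of $*$-simplicity combined with the center being a field.

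\emph{The main obstacle} I anticipate is precisely the bookkeeping in case (ii): verifying that whenever $A$ is $*$-simple, $*$ is nontrivial, and $Z_*(A)$ is a field, the scalar $\mu$ is forced to lie in $Z(A)^{\times}$. One must ensure that the cancellability hypothesis on $\mu$, together with $*$-simplicity, rules out any pathology where $\mu$ fails to be invertible while the center is still a field. Once this compatibility is established, the equivalence is obtained by simply intersecting the two characterizations and observing that the statement ``$A$ is $*$-simple and [(i) or (ii)]'' captures exactly the conjunction of graded simplicity and the center being a field, which by Theorem~\ref{maintheorem} is equivalent to simplicity of $C(A,\mu)$.
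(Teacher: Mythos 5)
Your proposal is correct and takes essentially the same route as the paper: apply Theorem~\ref{maintheorem} to the $\Z_2$-grading and intersect the characterizations of Proposition~\ref{translategradedsimple} and Proposition~\ref{translatecenter}, with the only reconciliation being that $\mu \in Z(A)^{\times}$ comes for free. The verification you defer in case (ii) is exactly the paper's one-line observation: $\mu$, viewed in $A$, is central and $*$-fixed, so $\mu \in Z_*(A) \subseteq Z(A)$, and cancellability makes it a non-zero element of the field $Z_*(A)$, hence invertible --- $*$-simplicity plays no role in this step.
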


\begin{proof}
The ''only if'' statement follows immediately from Proposition~\ref{translategradedsimple}, Proposition~\ref{translatecenter} and Theorem~\ref{maintheorem}.
The ''if'' statement follows in the same way, by first observing that
$\mu \in Z_*(A) \subseteq Z(A)$, and that $\mu$ therefore (in either case) is a non-zero element of a field, and thus invertible.
\end{proof}

\end{document}